\tikzset{math mode/.style = {execute at begin node=$, execute at end node=$}}
\tikzset{arrow/.style={postaction={decorate,thick,decoration={markings,mark = at position #1 with {\arrow{>}}}}},arrow/.default=0.5}
\tikzset{invarrow/.style={postaction={decorate,thick,decoration={markings,mark = at position #1 with {\arrow{<}}}}},invarrow/.default=0.5}
\tikzset{bosonic/.style={ultra thick}}
\tikzset{tab/.style={matrix of math nodes,column sep=-.35, row sep=-.35,text height=7pt,text width=7pt,align=center,inner sep=2,font=\footnotesize}}
\newcommand\tikzif[2][]{
\tikzifinpicture{#2}{\begin{tikzpicture}[#1]#2\end{tikzpicture}}
}
\tikzset{math mode/.style = {execute at begin node=$, execute at end node=$}}
\tikzset{arrow/.style={postaction={decorate,thick,decoration={markings,mark = at position #1 with {\arrow{>}}}}},arrow/.default=0.5}
\tikzset{invarrow/.style={postaction={decorate,thick,decoration={markings,mark = at position #1 with {\arrow{<}}}}},invarrow/.default=0.5}
\renewcommand\ss{\scriptstyle}
\newcommand{\doi}[1]{\href{https://doi.org/#1}{\texttt{doi:#1}}}
\newcommand{\arxiv}[1]{\href{https://arxiv.org/abs/#1}{\texttt{arXiv:#1}}}
\theoremstyle{plain}
\newtheorem{thm}{Theorem}[section]
\newtheorem{lemma}[thm]{Lemma}
\newtheorem{prop}[thm]{Proposition}
\newtheorem{cor}[thm]{Corollary}
\theoremstyle{definition}
\newtheorem{dfn}[thm]{Definition}
\newtheorem{ex}[thm]{Example}
\newtheorem{remark}[thm]{Remark}
\numberwithin{equation}{section}
\long\def\junk#1{}
\newcommand{\bra}[1]{\left\langle #1\right|}
\newcommand{\ket}[1]{\left|#1\right\rangle}
\newcommand{\braket}[2]{\left< #1 \vphantom{#2}\right|
\! \left. #2 \vphantom{#1} \right>}%
\newcommand\ebull[3]{\filldraw[fill=white, draw=black] (#1,#2) circle (#3cm);}
\newcommand\bbull[3]{\filldraw[fill=black, draw=black] (#1,#2) circle (#3cm);}
\newcommand\rbull[3]{\filldraw[fill=red,draw=black] (#1,#2) circle (#3cm);}
\newcommand\cbull[3]{\filldraw[fill=cyan,draw=black] (#1,#2) circle (#3cm);}
\newcommand\sbull[3]{\filldraw[fill=cyan,draw=black] (#1:#2) circle (#3cm);}
\tikzset{part1/.style={draw=red,line join=round}}
\tikzset{part2/.style={draw=cyan,line join=round}}
\newcommand{\bm}{\mathbf{m}}
\newcommand{\bll}{\mathbf{l}}
\newcommand{\bk}{\mathbf{k}}
\newcommand{\tl}{\mathtt{L}}
\newcommand{\tp}{\mathtt{P}}
\newcommand{\tm}{\mathtt{M}}
\newcommand{\tn}{\mathtt{N}}
\newcommand{\afl}{r}
\newcommand{\ra}{\mathit{A}}
\newcommand{\rb}{\mathit{B}}
\newcommand{\rc}{\mathit{C}}
\newcommand{\rd}{\mathit{D}}
\newcommand{\rp}{\mathrm{P}}
\newcommand{\rg}{\mathrm{G}}
\newcommand{\rh}{\mathrm{H}}
\newcommand{\rf}{\mathrm{F}}
\newcommand\bl[1]{\textrm{\color{blue}#1}}
\newcommand{\ba}{\mathbf{A}}
\newcommand{\bb}{\mathbf{B}}
\newcommand{\bc}{\mathbf{C}}
\newcommand{\bd}{\mathbf{D}}
\newcommand{\bp}{\mathbf{P}}
\newcommand{\cz}{\mathcal{Z}}
\DeclareMathOperator{\wt}{wt}
\newcommand\ajrem[1]{{\color{blue}[\textbf{AJ:} #1]}}
\newcommand\mwrem[1]{{\color{blue}[\textbf{MW:} #1]}}
\newcommand\pzjrem[1]{{\color{purple}[\textbf{PZJ:} #1]}}
\title{Structure constants for spin Hall--Littlewood functions}
\author{Ajeeth Gunna, Michael Wheeler and Paul Zinn-Justin}
\address{Ajeeth Gunna, Michael Wheeler, Paul Zinn-Justin, School of Mathematics and Statistics, University of Melbourne, Parkville, Victoria 3010, Australia.}
\email{ajeeth.gunna@unimelb.edu.au,wheelerm@unimelb.edu.au,pzinn@unimelb.edu.au}
\begin{document}
\begin{abstract}
We provide a combinatorial formula for the structure constants of spin Hall--Littlewood functions. This is achieved by representing these functions and the structure constants as the partition function of a lattice model and applying the underlying Yang--Baxter equation. Our combinatorial expression is in terms of generalised honeycombs; the latter were introduced by Knutson and Tao for ordinary Littlewood--Richardson coefficients and applied to the computation of Hall polynomials by Zinn–Justin.
\end{abstract}

\maketitle


\section{Introduction}

\emph{Hall--Littlewood polynomials} are a one-parameter generalisation of the well-known \emph{Schur polynomials}. The structure constants of these polynomials, commonly called \emph{Hall polynomials}, exhibit intriguing properties: they are polynomials and they enumerate short exact sequences of finite abelian $p$-groups. A combinatorial formula for these coefficients was presented in~\cite{macdonald1998symmetric} using classical Littlewood--Richardson tableaux. More recently, an alternative formula for Hall polynomials in terms of honeycombs was introduced by Zinn-Justin \cite{Puzzles:ZinnJustin2019HoneycombsFH}.

In \cite{lattice:spinborodin2014}, Borodin extended the classical Hall--Littlewood polynomials by incorporating an additional parameter ($s$): the resulting functions are commonly called \emph{spin Hall--Littlewood functions} ($\mathrm{F}_{\bm}$). In the same work, Borodin explored various identities associated with these functions, including Cauchy identities, Pieri rules, and the branching formula, orthogonality, among others; also see ~\cite{spin:Corwin2015StochasticHS,spin:Borodin_Corwin_Petrov_Sasamoto_2015,spin:BorodinPetrov2016HigherSS}.

\begin{equation}\label{eq:intro_spin_product}
\mathrm{F}_{\bm}(x_1,\dots,x_n;s) \hspace{1mm}\mathrm{F}_{\mathbf{l}}(x_1,\dots,x_{n};s)=\sum_{\bk}
\mathcal{C}^{\bk}_{\bll,\bm}(q,s)\hspace{1mm}
\mathrm{F}_{\mathbf{k}}(x_1,\dots,x_{n};s)  
\end{equation}

In this paper, we derive a puzzle formulation for $\mathcal{C}^{\bk}_{\bll,\bm}$. Through this formulation, we observe that although $\mathrm{F}_{\bm}$ are rational functions, expansion of their product in themselves remains finite.

\subsection{Lattice models and Littlewood--Richardson coefficients}

Over the past two decades, exactly solvable lattice models have been extensively used to study various families of symmetric functions. In this framework, symmetric functions are represented as partition functions of lattice models, and the \emph{Yang--Baxter equation} (YBE) is leveraged to derive identities such as \emph{Cauchy identities}, \emph{Littlewood identities}, \emph{Pieri rules}, and \emph{branching formulas}. The following families of symmetric polynomials have been explored using lattice models:
\begin{itemize}
     \item \emph{Schur polynomials}: \cite{puzzles:2008paul,schur:Brubaker2009SchurPA,schur:Aggarwal2021FreeFS,schur:Naprienko_2024}.
    \item \emph{Grothendieck polynomials}: \cite{Ms-bosfer-ktheory,MS13,Puzzles:WZJ16Grothendieck,Buciumas2020DoubleGP,lattice:Gunna2020VertexMF,brubakergroth}.
    \item \emph{Hall--Littlewood polynomials}: \cite{hall:Tsilevich2005QuantumIS,Puzzles:WZJHallPI,Puzzles:ZinnJustin2019HoneycombsFH}.
    \item \emph{Schubert polynomials}: \cite{puzzles:Iva_selfdual,PAintegrability1,PAintegrability2,PAintegrability3}.
    \item \emph{Macdonald polynomials}: \cite{macd:CantiniGW,macd:GarbaliGW,macd:GarbaliW,macd:Borodin2019NonsymmetricMP,macd:Garbali2024ShuffleAL}.
    \item \emph{LLT polynomials}: \cite{llt:Corteel2020AVM,llt:ABW}.
    \item \emph{{\textit{q}-Whittaker polynomials}}: \cite{whit:Mucciconi2020SpinQP,whit:Korotkikh2022RepresentationTI,whit:Borodin2021InhomogeneousS}.
\end{itemize}

In~\cite{puzzles:2008paul}, Zinn-Justin introduced a method for computing Littlewood--Richardson coefficients of Schur polynomials using exactly solvable lattice models. Building on this, Wheeler and Zinn-Justin applied the technique to compute the structure constants of Grothendieck polynomials~\cite{Puzzles:WZJ16Grothendieck}. The essence of this approach lies in constructing a lattice and evaluating its partition function in two different ways using YBE. In one evaluation, the model produces the product of the target functions. In the other, the same partition function is interpreted as a sum over certain combinatorial object multiplied by the target function.

In this paper, a similar technique is used; however, the argument presented here is much simpler than in previous cases and is reminiscent of the now-standard approach used to prove \emph{Cauchy identities} using YBE. Additionally, the lattice model used in previous cases allows at most one particle per site, and it is unclear if that model can be extended to cases where each site can hold multiple particles. We outline the structure of our technique.

\begin{equation}
\label{eq:intro_yangbaxterlatticemodel}
\def\k{3}\def\n{5}\def\spc{0.5}\pgfmathtruncatemacro\kk{\n-\k}
 \begin{tikzpicture}[scale=0.4,baseline=(current bounding box.center)]
\draw (-\n*0.25-\k*0.5,-\n*1.299) coordinate (A) -- ++(120:\n) coordinate (B)-- ++(60:\n) coordinate (C) -- ++(0:\n) coordinate (D) -- ++(-60:\n) coordinate (E) -- ++(-120:\n) coordinate (F) -- cycle;
\draw (C) -- ++(-60:\n) coordinate (G) -- (A); \draw (G)--(E);
\path (B)++(180:0.5)--++(60:2.5) node {$\ss \bm $};
\path (C)++(90:0.5)--++(0:2.5) node {$\ss \bll $};
\draw[fill=lightgray] (A)--++(60:5)--++(0:5)--++(-120:5)--++(180:5);
\path (B) ++(0:2.5) node {\bl A};
\path (F) ++(120:2.5) node {\bl B};
\path (A) ++(60:7.5) node {\bl C};
\end{tikzpicture}=\sum_{\bk}\begin{tikzpicture}[scale=0.4,baseline=(current bounding box.center)]
\draw (-\n*0.25-\k*0.5,-\n*1.299) coordinate (A) -- ++(120:\n) coordinate (B)-- ++(60:\n) coordinate (C) -- ++(0:\n) coordinate (D) -- ++(-60:\n) coordinate (E) -- ++(-120:\n) coordinate (F) -- cycle;
\draw (B) -- ++(0:\n) coordinate (G) -- (D); \draw (F) -- (G);
\path (D)++(180:0.5)--++(-120:2.5) node {$\ss \bk $};
\path (B)++(180:0.5)--++(60:2.5) node {$\ss \bm $};
\path (C)++(90:0.5)--++(0:2.5) node {$\ss \bll $};
\draw[fill=lightgray] (A)--++(0:5)--++(120:5)--++(180:5)--++(-60:5);
\path (A) ++(60:2.5) node {\bl D};
\path (G) ++(120:2.5) node {\bl E};
\path (G) ++(0:2.5) node {\bl F};
\end{tikzpicture}
\end{equation}

In equation~\eqref{eq:intro_yangbaxterlatticemodel}, the partition function of regions $A$ and $C$ are spin Hall--Littlewood functions while region $B$ gives a trivial factor. On the RHS of ~\eqref{eq:intro_yangbaxterlatticemodel},   region $F$ produces the spin Hall--Littlewood function, and region D contributes a trivial factor while region $E$ gives the desired puzzles.

 \subsection{Layout of the paper}
 In ~\cref{sec:preliminaries} we recall necessary prerequisites and establish our notation. We then recall the most general solution of the YBE in the case of models of type $\mathcal{U}_{q}(\widehat{\mathfrak{sl}}_{\afl+1})$. We then specialise those weights to introduce the six vertex model and its wave functions, higher spin vertex models, and spin Hall--Littlewood functions.

 In \cref{sec:maintheorems} we introduce \emph{six vertex puzzles} and \emph{higher spin puzzles} and state our main theorems. Sections ~\ref{sec:proof_6v} and ~\ref{sec:proof_spinhall} are dedicated for the proofs of our main theorems.

\section{Preliminaries}
\label{sec:preliminaries}
\subsection{Lattice model}
We consider a lattice model consisting of horizontal lines oriented from left to right and vertical lines oriented from bottom to top, with their intersections forming vertices. Variables are attached to both vertical and horizontal lines. We decorate the edges with labels, typically a non-negative integer or a tuple of non-negative integers, depending on the context.
\begin{equation*}
\begin{tikzpicture}[scale=0.8,baseline=(current bounding box.center)]
\draw[ultra thick, arrow=1] (0,0.5)--(1,0.5);
\draw[ultra thick, arrow=1] (0.5,0)--(0.5,1);
\node at (-0.5,0.5) {$\bb$};
\node at (1.5,0.5) {$\bd$};
\node at (0.5,-0.5) {$\ba$};
\node at (0.5,1.5) {$\bc$};
\draw[->] (0.5,-1.5) node[below] {$y$}--(0.5,-1);
\draw[->] (-1.5,0.5) node[left] {$x$}--(-1,0.5);
\end{tikzpicture}=W\left(\dfrac{x}{y};q;\ba,\bb,\bc,\bd\right)
\end{equation*}
We refer to the labelling of the edges of all vertices in a lattice as a configuration. 
We attach a weight to each vertex depending on its respective row and column. These weights are non-zero only when the \emph{conservation} is satisfied i.e., the sum of the labels entering from the bottom and left is equal to the sum of the labels on the top and right. A configuration is valid only when conservation is satisfied at each vertex.  Below is an example of a valid configuration where edges are labelled with $0$ and $1$.
\[
\begin{tikzpicture}[scale=1]
\draw[ultra thick,arrow=1] (0.5,0) node[left,black]{$\ss 1$}--(1.5,0) node[black]{$\ss 0$} --(2.5,0) node[black]{$\ss 0$}--(3.5,0) node[right,black]{$\ss 0$};
\draw[ultra thick,arrow=1] (0.5,1) node[left,black]{$\ss 0$}--(1.5,1) node[black]{$\ss 1$}--(2.5,1)node[black]{$\ss 0$}--(3.5,1)node[right,black]{$\ss 0$};
\draw[ultra thick,arrow=1]  (1,-0.5) node[below] {$\ss 0$}--(1,0.5) node [right] {$\ss 1$}--(1,1.5) node[above] {$\ss 0$};
\draw[ultra thick,arrow=1]  (2,-0.5)node[below] {$\ss 0$}--(2,0.5) node[right]{$\ss 0$}--(2,1.5) node[above] {$\ss 1$};
\draw[ultra thick,arrow=1]  (3,-0.5)node[below] {$\ss 0$}--(3,0.5) node[right]{$\ss 0$}--(3,1.5)node[above] {$\ss 0$};
\end{tikzpicture}
\]
For each configuration, the total weight is determined by multiplying the weights of the individual vertices. The partition function of a lattice model with a fixed boundary is defined as the sum of the weights of all possible configurations.

Drawing a vertex as the intersection of two lines is convenient when the edges are labelled with non-negative integers. However, this representation becomes cumbersome when the edges are decorated by tuples. Hence, we often depict a vertex as a tile, with particles entering from the bottom and left, and exiting through the top and right. Below is an example of a vertex with edges decorated by ordered pairs, and its corresponding tile representation.
\begin{center}
\begin{tabular}{c@{\hspace{2cm}}c}
\begin{tikzpicture}[scale=0.8,baseline=(current bounding box.center)]
\draw[ultra thick, arrow=1] (0,0.5)--(1,0.5);
\draw[ultra thick, arrow=1] (0.5,0)--(0.5,1);
\node at (-0.5,0.5) {$\ss (\textcolor{red}{2},\textcolor{cyan}{1})$};
\node at (1.5,0.5) {$\ss (0,\textcolor{cyan}{2})$};
\node at (0.5,-0.5) {$\ss (\textcolor{red}{1},\textcolor{cyan}{1})$};
\node at (0.5,1.5) {$\ss (\textcolor{red}{3},0)$};
\end{tikzpicture}
&
\begin{tikzpicture}[scale=1,baseline=(current bounding box.center)]
 \draw[gray] (0,0) rectangle (1,1); 
\draw[red,->,rounded corners] (0,0.7)--(0.3,0.7)--(0.3,1);
\draw[red,->,rounded corners] (0,0.6)--(0.4,0.6)--(0.4,1);
\draw[cyan,->,rounded corners] (0,0.5)--(1,0.5);
\draw[red,->,rounded corners] (0.5,0)--(0.5,1);
\draw[cyan,->,rounded corners] (0.6,0)--(0.6,0.4)--(1,0.4);
\end{tikzpicture}\\
\end{tabular}
\end{center}

\subsection{Yang--Baxter equation}
\label{subsec:Yangbaxterequation}
We recall the Bosnjak--Mangazeev solution of the Yang--Baxter equation~\cite{bosnjak2016construction,lattice:borodin2018coloured}. Here, $\afl$ denotes the rank of the quantized affine algebra $\mathcal{U}_{q}(\widehat{\mathfrak{sl}}_{\afl+1})$, and $\tl, \tm, \tn$ are positive numbers that denote the capacity \footnote{This refers to the total number of particles that a line can occupy.} of a line. Variables $x,y,z$ are attached to lines. For an $\afl$-tuple of non-negative integers $\ba$, we denote $|\ba|$ as the sum of all the coordinates of $\ba$. 

Let $\ba,\bb,\bc,\bd$ be $\afl$-tuples of non-negative integers, and associate a weight to vertices as follows:

\begin{equation*}
\begin{tikzpicture}[scale=0.8,baseline=(current bounding box.center)]
\draw[gray] (0,0) rectangle (1,1);
\node at (-0.5,0.5) {$\bb$};
\node at (1.5,0.5) {$\bd$};
\node at (0.5,-0.5) {$\ba$};
\node at (0.5,1.5) {$\bc$};
\draw[->] (0.5,-1.5) node[below] {$(y,\tm)$}--(0.5,-1);
\draw[->] (-1.5,0.5) node[left] {$(x,\tl)$}--(-1,0.5);
\end{tikzpicture} =  W_{\tl,\tm}\left(\dfrac{x}{y};q;\ba,\bb,\bc,\bd\right)
\end{equation*}
For vertices where $|\ba|,|\bc|>\tm$ or $|\bb|,|\bd|>\tl$, the weights are identically zero. These weights are said to satisfy the Yang--Baxter equation when

\begin{multline}
\label{eq:yangbaxter}
    \sum_{\bc_1, \bc_2 ,\bc_3} W_{\tl,\tm}\left( \dfrac{x}{y};q;\ba_2,\ba_1,\bc_2,\bc_1\right)
    W_{\tl,\tn}\left( \dfrac{x}{z};q;\ba_3,\bc_1,\bc_3,\bb_1\right)
    W_{\tm,\tn}\left( \dfrac{y}{z};q;\bc_3,\bc_2,\bb_3,\bb_2\right)\\
    =\sum_{\bc_1, \bc_2 ,\bc_3} W_{\tm,\tn}\left( \dfrac{y}{z};q;\ba_3,\ba_2,\bc_3,\bc_2\right) W_{\tl,\tn}\left( \dfrac{x}{z};q;\bc_3,\ba_1,\bb_3,\bc_1\right)
    W_{\tl,\tm}\left( \dfrac{x}{y};q;\bc_2,\bc_1,\bb_2,\bb_1\right)
\end{multline}
 where $\ba_1,\ba_2,\ba_3,\bb_1,\bb_2,\bb_3$ are fixed $\afl$-tuples of non-negative integers, and the summation on both sides of the equation is over triples of $\afl$-tuples $\bc_1,\bc_2,\bc_3$ of non-negative integers. Graphically \ref{eq:yangbaxter} is represented as below:
\

\begin{align}
\label{graph-master}
\sum_{\bc_1,\bc_2,\bc_3}
\begin{tikzpicture}[scale=1.5,baseline=(current bounding box.center)]
\draw[gray] (0:0)--(60:1)--++(-60:1)--++(-120:1)--++(120:1);
\draw[gray] (0:0)++(60:1)++(-60:1)--++(0:1)--++(120:1)--++(180:1);
\draw[gray] (0:0)++(60:1)++(-60:1)++(0:1)--++(-120:1)--++(180:1);
\node[left] at (-0.2,0.5) {$(x,\tl) \rightarrow$};
\node[left] at (-0.2,-0.5) {$(y,\tm) \rightarrow$};
\node[below] at (1,-1.4) {$\uparrow$};
\node[below] at (1,-1.9) {$(z,\tn)$};
\path (0:0)++(60:0.5) node {$\ss \ba_{1}$};
\path (0:0)++(-60:0.5) node {$\ss \ba_{2}$};
\path (0:0)++(-60:1)++(0:0.5) node {$\ss \ba_{3}$};
\path (0:0)++(-60:1)++(0:1)++(60:0.5) node {$\ss \bb_{1}$};
\path (0:0)++(-60:1)++(0:1)++(60:1)++(120:0.5) node {$\ss \bb_{2}$};
\path (0:0)++(-60:1)++(0:1)++(60:1)++(120:1)++(180:0.5) node {$\ss \bb_{3}$};
\path (0:0)++(-60:1)++(60:0.5) node {$\ss \bc_{1}$};
\path (0:0)++(-60:1)++(60:1)++(120:0.5) node {$\ss \bc_{2}$};
\path (0:0)++(-60:1)++(60:1)++(0:0.5) node {$\ss \bc_{3}$};
\end{tikzpicture}
=
\quad
\sum_{\bc_1,\bc_2,\bc_3}
\begin{tikzpicture}[scale=1.5,baseline=(current bounding box.center)]
\draw[gray] (0:0)--++(60:1)--++(0:1)--++(-60:1)--++(-120:1)--++(180:1)--++(120:1);
\draw[gray] (0:0)--++(0:1)--++(60:1);
\draw[gray] (0:0)++(0:1)--++(-60:1);
\node[left] at (-0.2,0.5) {$(x,\tl) \rightarrow$};
\node[left] at (-0.2,-0.5) {$(y,\tm) \rightarrow$};
\node[below] at (1,-1.4) {$\uparrow$};
\node[below] at (1,-1.9) {$(z,\tn)$};
\path (0:0)++(60:0.5) node {$\ss \ba_{1}$};
\path (0:0)++(-60:0.5) node {$\ss \ba_{2}$};
\path (0:0)++(-60:1)++(0:0.5) node {$\ss \ba_{3}$};
\path (0:0)++(-60:1)++(0:1)++(60:0.5) node {$\ss \bb_{1}$};
\path (0:0)++(-60:1)++(0:1)++(60:1)++(120:0.5) node {$\ss \bb_{2}$};
\path (0:0)++(-60:1)++(0:1)++(60:1)++(120:1)++(180:0.5) node {$\ss \bb_{3}$};
\path (0:0)++(0:0.5) node {$\ss \bc_{3}$};
\path (0:0)++(0:1)++(-60:0.5) node {$\ss \bc_{2}$};
\path (0:0)++(0:1)++(60:0.5) node {$\ss \bc_{1}$};
\end{tikzpicture}
\end{align}

\

We recall the standard definitions of the \emph{$q$-Pochhammer symbol} and the \emph{$q$-binomial coefficient}:  

The $q$-Pochhammer symbol is defined as  
\[
(a;q)_{k} := (1 - a)(1 - aq)\cdots(1 - aq^{k-1}), \quad \text{with} \quad (a;q)_{0} = 1. 
\]  

The $q$-binomial coefficient is given by  
\[
\binom{a}{b}_{q} = \dfrac{(q;q)_{a}}{(q;q)_{a-b}(q;q)_{b}}.  
\]

\

\begin{thm}[~\cite{bosnjak2016construction},\cite{lattice:borodin2018coloured}]
\label{theorem:weightsoftheYBE}
For any two $\afl$-tuples $\lambda$, $\mu$ such that $\lambda_i \leq \mu_i$ for all $1\leq i \leq \afl$, we define the function:
\begin{equation*}
\Phi(\lambda,\mu;x,y):=\dfrac{(x;q)_{|\lambda|} (y/x ; q)_{|\mu-\lambda|}}{(y;q)_{|\mu|}}(y/x)^{|\lambda|}\left( q^{\sum\limits_{i<j} (\mu_i -\lambda_i)\lambda_j}\right)\prod^{n}_{i=1}{\binom{\mu_i}{\lambda_i}}_{q}
\end{equation*}
Then the weights defined as 

\begin{multline}
\label{generalweights}
W_{\tl,\tm}(x;q;\ba,\bb,\bc,\bd)=\mathbf{1}_{\ba+\bb=\bc+\bd} 
\hspace{5mm}x^{|\bd-\bb|} q^{|\ba \tl-\bd \tm|}\\[0.5 em]
\times \sum_{\mathbf{P}}\Phi(\bc-\mathbf{P},\bc+\bd-\mathbf{P};q^{\tl-\tm}x,q^{-\tm}x)\Phi(\mathbf{P},\bb;q^{-\tl}/x,q^{-\tl})
\end{multline} where the summation is over $\afl$-tuples $\mathbf{P}=({P}_1,\dots,{P}_{\afl})$ such that $0\leq {P}_{i}\leq \min(B_i,C_i)$,
satisfy the Yang--Baxter equation ~\eqref{eq:yangbaxter}.
\end{thm}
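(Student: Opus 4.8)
The plan is to deduce \eqref{eq:yangbaxter} for the weights \eqref{generalweights} by \emph{fusion}, reducing everything to the capacity-one case $\tl=\tm=\tn=1$. In that base case a line carries at most one particle, whose "color'' lies in $\{1,\dots,\afl\}$, so each of $\ba,\bb,\bc,\bd$ is a standard basis vector or $0$, the sums over $\bc_1,\bc_2,\bc_3$ in \eqref{eq:yangbaxter} are finite, and $W_{1,1}$ is (a gauge transform of) the $R$-matrix of $\mathcal{U}_q(\widehat{\mathfrak{sl}}_{\afl+1})$ in the vector representation; the Yang--Baxter equation for it is classical, and can also be checked by hand as a short list of scalar identities. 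To pass to general $\tl,\tm,\tn$ I would, as usual, replace a capacity-$\tl$ horizontal line by a bundle of $\tl$ capacity-one lines carrying spectral parameters in geometric progression $x,qx,\dots,q^{\tl-1}x$ (and similarly for the $\tm$- and $\tn$-lines). At these distinguished values the ordered product of elementary $R$-matrices along a bundle becomes proportional to the projector onto the symmetric part of the tensor product of vector representations; restricting to that summand defines the fused weights, and the number of particles of each color on a bundle now ranges over $0,\dots,\tl$, which matches the support condition "$|\ba|,|\bc|\le\tm$ and $|\bb|,|\bd|\le\tl$''. Since YBE holds for the unfused model and the fusion projectors commute with the relevant $R$-matrices, it descends verbatim to the fused weights; this is the Kulish--Reshetikhin--Sklyanin fusion argument, which also explains the graphical picture \eqref{graph-master}.

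The remaining, and genuinely substantive, step is to identify the fused weight obtained this way with the closed formula \eqref{generalweights}. A fused matrix element is a sum over the internal capacity-one configuration, i.e.\ over the ways the colored particles thread through the $\tl\times\tm$ array of elementary vertices sitting inside a single fused vertex. I would organize this sum by recording, for each color $i$, how many particles pass "straight through'' versus "cross over''; the cross-over data is exactly the tuple $\mathbf{P}$ with $0\le P_i\le\min(B_i,C_i)$, and summing the elementary weights over configurations compatible with a fixed $\mathbf{P}$ should reproduce the product of the two $\Phi$-factors, while the residual geometric sums over particle positions along each bundle produce the $q$-binomial coefficients and the monomial prefactor $x^{|\bd-\bb|}q^{|\ba\tl-\bd\tm|}$. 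Because \eqref{generalweights} is manifestly polynomial in $q^{\tl},q^{\tm},q^{\tn}$ (after clearing the $q$-Pochhammer denominators), it suffices to establish this matching for all positive integer capacities, and then the polynomial identity \eqref{eq:yangbaxter} extends automatically to arbitrary values.

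The hard part will be precisely this matching: the combinatorial sum coming from fusion and the $\Phi$-expression must be shown equal, which unwinds to an iterated $q$-hypergeometric summation — after peeling off the color bookkeeping it should collapse, color by color, to the $q$-Chu--Vandermonde / $q$-Gauss evaluations, but assembling the pieces and keeping the gauge straight is where essentially all the labor lies. A clean way to pin the normalization down is to first evaluate both sides on distinguished boundary data where they visibly collapse (e.g.\ $\bb=\bd$, or $\ba$, $\bd$ extremal), fixing the prefactor, and then invoke uniqueness of the solution of \eqref{eq:yangbaxter} with a prescribed base point and the crossing symmetry of $W_{\tl,\tm}$. As an alternative to fusion one could attempt a direct verification: \eqref{eq:yangbaxter} for \eqref{generalweights} is, after clearing denominators, a polynomial identity in $x,y,z$ of bounded degree, so checking finitely many specializations suffices, and since the rank $\afl$ enters only combinatorially one may further reduce to $\afl=1$ by a color-merging argument and appeal to the known higher-spin $\mathcal{U}_q(\widehat{\mathfrak{sl}}_2)$ $R$-matrix; I nevertheless expect the fusion route above to be the most transparent.
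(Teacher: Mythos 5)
First, note that the paper itself offers no proof of Theorem~\ref{theorem:weightsoftheYBE}: the statement is recalled verbatim from \cite{bosnjak2016construction} and \cite{lattice:borodin2018coloured}, so there is no internal argument to compare yours against. Your fusion strategy --- verify the YBE for the fundamental ($\tl=\tm=\tn=1$) $\mathcal{U}_{q}(\widehat{\mathfrak{sl}}_{\afl+1})$ $R$-matrix, replace each higher-capacity line by a bundle of capacity-one lines with spectral parameters in geometric progression, restrict to the symmetric component, and finally continue analytically in $q^{\tl},q^{\tm},q^{\tn}$ --- is exactly the route taken in \cite{lattice:borodin2018coloured}, and the continuation step is the same one this paper invokes later when it substitutes $q^{-\tl}=s^{2}$ to obtain the higher spin weights.

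That said, what you have written is a plan rather than a proof, and the gap sits precisely where you say ``the hard part will be'': the identification of the fused matrix element with the closed formula \eqref{generalweights}. Everything in the theorem that is not already classical lives in that identification --- the factorisation of the internal sum over threadings into the two $\Phi$-factors indexed by the crossover tuple $\mathbf{P}$, the prefactor $x^{|\bd-\bb|}q^{|\ba\tl-\bd\tm|}$, and the range $0\leq P_{i}\leq\min(B_i,C_i)$. Deferring this to ``iterated $q$-Chu--Vandermonde'' does not discharge it; in \cite{bosnjak2016construction} this evaluation is the main result and occupies most of that paper. Two of your auxiliary remarks also need care. The weights are rational, not polynomial, in $q^{\tl}$ and $q^{\tm}$, so the continuation argument requires clearing the $q$-Pochhammer denominators and checking that the integer specialisations used do not sit on poles. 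More seriously, the proposed reduction of the rank-$\afl$ YBE to $\afl=1$ by colour merging runs in the wrong direction: merging colours projects higher-rank boundary data onto lower-rank data, so the rank-one identity is a consequence of the rank-$\afl$ one, not a substitute for it. The fusion route is the right one, but the combinatorial matching must actually be carried out (or explicitly delegated to the cited references) before this can be accepted as a proof.
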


\subsection{Six vertex model}
In this subsection we consider the specialisation $\afl=1,\tl=1,\tm=1$ of the general weights~\eqref{generalweights}. In this setting, the edge states of a vertex are non-negative integers constrained to be at most $1$. Therefore, each edge label can be either $0$ or $1$. Graphically, we use (\begin{tikzpicture}
    \bbull{0}{0}{0.1};
\end{tikzpicture}) to indicate an edge label $1$ and (\begin{tikzpicture}
    \ebull{0}{0}{0.1};
\end{tikzpicture}) to indicate an edge label $0$. There are six vertices where the conservation is satisfied (we simplify the notation here by omitting the spin labels):

\begin{equation*}
\begin{tabular}{cccccc}
\begin{tikzpicture}[baseline={(0,0)}]
\draw (0:0)--++(0:1)--++(90:1)--++(180:1)--++(-90:1); 
\draw[fill=black] (0,0)++(90:0.5) circle (0.1);
\draw[fill=black] (0,0)++(0:0.5) circle (0.1);
\draw[fill=black] (0,0)++(0:1)++(90:0.5) circle (0.1);
\draw[fill=black] (0,0)++(0:0.5)++(90:1) circle (0.1);
\draw[black,thick] (0,0.5)--(0.5,1);
\draw[black,thick] (0.5,0)--(1,0.5);
\end{tikzpicture}
\qquad&\qquad
\begin{tikzpicture}[baseline={(0,0)}]
\draw (0:0)--++(0:1)--++(90:1)--++(180:1)--++(-90:1); 
\draw[fill=white] (0,0)++(90:0.5) circle (0.1);
\draw[fill=white] (0,0)++(0:0.5) circle (0.1);
\draw[fill=white] (0,0)++(0:1)++(90:0.5) circle (0.1);
\draw[fill=white] (0,0)++(0:0.5)++(90:1) circle (0.1);
\end{tikzpicture}
\qquad&\quad
\begin{tikzpicture}[baseline={(0,0)}]
\draw (0:0)--++(0:1)--++(90:1)--++(180:1)--++(-90:1); 
\draw[fill=white] (0,0)++(90:0.5) circle (0.1);
\draw[fill=black] (0,0)++(0:0.5) circle (0.1);
\draw[fill=white] (0,0)++(0:1)++(90:0.5) circle (0.1);
\draw[fill=black] (0,0)++(0:0.5)++(90:1) circle (0.1);
\draw[black,thick] (0:0)++(0:0.5)--++(90:1);
\end{tikzpicture}
\qquad&\qquad
\begin{tikzpicture}[baseline={(0,0)}]
\draw (0:0)--++(0:1)--++(90:1)--++(180:1)--++(-90:1); 
\draw[fill=black] (0,0)++(90:0.5) circle (0.1);
\draw[fill=white] (0,0)++(0:0.5) circle (0.1);
\draw[fill=black] (0,0)++(0:1)++(90:0.5) circle (0.1);
\draw[fill=white] (0,0)++(0:0.5)++(90:1) circle (0.1);
\draw[black,thick] (0:0)++(90:0.5)--++(0:1);
\end{tikzpicture}
\qquad & \qquad
\begin{tikzpicture}[baseline={(0,0)}]
\draw (0:0)--++(0:1)--++(90:1)--++(180:1)--++(-90:1); 
\draw[fill=white] (0,0)++(90:0.5) circle (0.1);
\draw[fill=black] (0,0)++(0:0.5) circle (0.1);
\draw[fill=black] (0,0)++(0:1)++(90:0.5) circle (0.1);
\draw[fill=white] (0,0)++(0:0.5)++(90:1) circle (0.1);
\draw[black,thick] (0.5,0)--(1,0.5);
\end{tikzpicture}
\qquad & \qquad
\begin{tikzpicture}[baseline={(0,0)}]
\draw (0:0)--++(0:1)--++(90:1)--++(180:1)--++(-90:1); 
\draw[fill=black] (0,0)++(90:0.5) circle (0.1);
\draw[fill=white] (0,0)++(0:0.5) circle (0.1);
\draw[fill=white] (0,0)++(0:1)++(90:0.5) circle (0.1);
\draw[fill=black] (0,0)++(0:0.5)++(90:1) circle (0.1);
\draw[black,thick] (0,0.5)--(0.5,1);
\end{tikzpicture}\\
\end{tabular}
\end{equation*}

\begin{dfn}
\label{def:6v-wavefunction}
For two binary strings $m = (m_{1}, m_{2}, \dots)$ and $w = (w_{1}, w_{2}, \dots)$, where $m_{i}, w_{i} \in \{0,1\}$ and $m$ contains $n$ more $1$'s than $w$, we define the rational function $\rh_{m/w}(x_1, \dots, x_n;y_{1},\dots)$ as follows:
\begin{equation}
\label{eq:definitionofH}
\rh_{m/w}(x_1,\dots,x_n;y_{1},\dots):=
\begin{tikzpicture}[scale=0.6,baseline=(current bounding box.center)]
\draw (0:0)--++(0:4)--++(90:5)--++(180:4)--++(-90:5);
\foreach \x in {1,2,3,4}{
\draw[gray] (0:\x)--++(90:5);
\draw[gray] (90:\x)--++(0:4);
};
\foreach \x in {1,2,3,4}{
\draw[fill=black] (0:0)++(90:5)++(0:\x-0.5) circle (0.1);
\draw[fill=white] (0:0)++(0:\x-0.5) circle (0.1);
};
\path (180:0.5)++(90:4.5) node {$\ss m_1$};
\path (180:0.5)++(90:3.5) node {$\ss m_2$};
\path (180:0.5)++(90:2) node[rotate=90] {$\ss \dots$};
\path (0:4.5)++(90:4.5) node {$\ss w_1$};
\path (0:4.5)++(90:3.5) node {$\ss w_2$};
\path (0:4.5)++(90:2) node[rotate=90] {$\ss \dots$};
\draw[->] (0.5,-1) node[below] {$\ss x_1 $}--(0.5,-0.5);
\node at (2,-1) {$\ldots$} ;
\draw[->] (3.5,-1) node[below] {$\ss x_n $}--(3.5,-0.5);
 \path (0:0)++(180:3)++(90:2) node[rotate=90] {$\dots$};
 \path (0:0)++(180:3)++(90:3.5) node {$\ss y_{2}$};
 \path (0:0)++(180:3)++(90:4.5) node {$\ss y_{1}$};
  \foreach \x in {1,2,3,4,5}{
 \draw[->] (0:0)++(180:2.5)++(90:\x-0.5)--++(0:0.5);
 };
\end{tikzpicture}
\end{equation}
where the above lattice is to be interpreted as a partition function, with vertex weights:
\

\begin{equation}
\label{weights:defining-H}
\begin{tabular}{cccc}
\begin{tikzpicture}[baseline={(0,0)},rotate=0]
\draw (0:0)--++(0:1)--++(90:1)--++(180:1)--++(-90:1); 
\draw[->] (0:0)++(-90:0.3)++(0:0.5) node[below]{$x$}--++(90:0.6);
\draw[->] (0:0)++(90:0.5)++(180:0.3) node[left] {$y$}--++(0:0.6);
\end{tikzpicture}
&\qquad
\begin{tikzpicture}[baseline={(0,0)}]
\draw (0:0)--++(0:1)--++(90:1)--++(180:1)--++(-90:1); 
\draw[fill=black] (0,0)++(90:0.5) circle (0.1);
\draw[fill=black] (0,0)++(0:0.5) circle (0.1);
\draw[fill=black] (0,0)++(0:1)++(90:0.5) circle (0.1);
\draw[fill=black] (0,0)++(0:0.5)++(90:1) circle (0.1);
\path (0:0)++(0:0.5)++(-90:1) node {$1$};
\draw[black,thick] (0,0.5)--(0.5,1);
\draw[black,thick] (0.5,0)--(1,0.5);
\end{tikzpicture}
&\qquad
\begin{tikzpicture}[baseline={(0,0)}]
\draw (0:0)--++(0:1)--++(90:1)--++(180:1)--++(-90:1); 
\draw[fill=white] (0,0)++(90:0.5) circle (0.1);
\draw[fill=white] (0,0)++(0:0.5) circle (0.1);
\draw[fill=white] (0,0)++(0:1)++(90:0.5) circle (0.1);
\draw[fill=white] (0,0)++(0:0.5)++(90:1) circle (0.1);
\path (0:0)++(0:0.5)++(-90:1) node {$1$};
\end{tikzpicture}
&\quad
\begin{tikzpicture}[baseline={(0,0)}]
\draw (0:0)--++(0:1)--++(90:1)--++(180:1)--++(-90:1); 
\draw[fill=white] (0,0)++(90:0.5) circle (0.1);
\draw[fill=black] (0,0)++(0:0.5) circle (0.1);
\draw[fill=white] (0,0)++(0:1)++(90:0.5) circle (0.1);
\draw[fill=black] (0,0)++(0:0.5)++(90:1) circle (0.1);
\path (0:0)++(0:0.5)++(-90:1) node {$\ss \dfrac{1-xy^{-1}q^{-1}}{1-q x y^{-1}}$};
\draw[black,thick] (0:0)++(0:0.5)--++(90:1);
\end{tikzpicture}\\[6em]
&\quad
\begin{tikzpicture}[baseline={(0,0)}]
\draw (0:0)--++(0:1)--++(90:1)--++(180:1)--++(-90:1); 
\draw[fill=black] (0,0)++(90:0.5) circle (0.1);
\draw[fill=white] (0,0)++(0:0.5) circle (0.1);
\draw[fill=black] (0,0)++(0:1)++(90:0.5) circle (0.1);
\draw[fill=white] (0,0)++(0:0.5)++(90:1) circle (0.1);
\path (0:0)++(0:0.5)++(-90:1) node {$\ss {\dfrac{q\left(1-xy^{-1}q^{-1}\right)}{1- xy^{-1}}}$};
\draw[black,thick] (0:0)++(90:0.5)--++(0:1);
\end{tikzpicture}
&\quad
\begin{tikzpicture}[baseline={(0,0)}]
\draw (0:0)--++(0:1)--++(90:1)--++(180:1)--++(-90:1); 
\draw[fill=white] (0,0)++(90:0.5) circle (0.1);
\draw[fill=black] (0,0)++(0:0.5) circle (0.1);
\draw[fill=black] (0,0)++(0:1)++(90:0.5) circle (0.1);
\draw[fill=white] (0,0)++(0:0.5)++(90:1) circle (0.1);
\path (0:0)++(0:0.5)++(-90:1) node {$\ss \dfrac{(1-q)xy^{-1}q^{-1}}{1-xy^{-1}}$};
\draw[black,thick] (0.5,0)--(1,0.5);
\end{tikzpicture}
&\quad
\begin{tikzpicture}[baseline={(0,0)}]
\draw (0:0)--++(0:1)--++(90:1)--++(180:1)--++(-90:1); 
\draw[fill=black] (0,0)++(90:0.5) circle (0.1);
\draw[fill=white] (0,0)++(0:0.5) circle (0.1);
\draw[fill=white] (0,0)++(0:1)++(90:0.5) circle (0.1);
\draw[fill=black] (0,0)++(0:0.5)++(90:1) circle (0.1);
\path (0:0)++(0:0.5)++(-90:1) node {$\ss \dfrac{(1-q)}{1- x y^{-1}}$};
\draw[black,thick] (0,0.5)--(0.5,1);
\end{tikzpicture}\\
\end{tabular}
\end{equation} 

\

When the word $w$ consists entirely of holes, we omit the index $w$ from the notation and denote the function simply by $\rh_{m}(x_1, \dots, x_n; y_1, \dots)$. Moreover, we define
\[
\rh_{m/w}(x_1, \dots, x_n) := \rh_{m/w}(x_1, \dots, x_n; y_1, \dots)\big|_{y_i = q^{-1}}.
\]
\end{dfn}

\begin{remark}
The weights ~(\ref{weights:defining-H}) are obtained from the general weights~(\ref{generalweights}) via the following transformations:
\begin{equation*}
\begin{tikzpicture}[scale=0.8,baseline=(current bounding box.center)]
\draw[gray] (0,0) rectangle (1,1);
\node at (-0.5,0.5) {$b$};
\node at (1.5,0.5) {$d$};
\node at (0.5,-0.5) {$a$};
\node at (0.5,1.5) {$c$};
\draw[->] (0.5,-1.5) node[below] {$x$}--(0.5,-1);
\draw[->] (-1.5,0.5) node[left] {$y$}--(-1,0.5);
\end{tikzpicture}=W_{1,1}\left(\dfrac{q y}{x};q;1-a,1-b,1-c,1-d\right)
\end{equation*}
\end{remark}

\begin{prop}\cite{spin:BorodinPetrov2016HigherSS}  
The functions $\rh_{m/w}(x_1, \dots, x_n)$ are symmetric in the variables $x_1, \dots, x_n $.  
\end{prop}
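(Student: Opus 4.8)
The plan is to run the standard \emph{train} (railway) argument: insert an $R$-matrix into the lattice defining $\rh_{m/w}$ and transport it across using the Yang--Baxter equation. Since adjacent transpositions generate $S_n$, it is enough to prove invariance under $x_i\leftrightarrow x_{i+1}$ for a fixed $i$. The relevant feature of \eqref{eq:definitionofH} is that $x_i$ is attached to the $i$-th vertical line, all of whose boundary edges are frozen in the same way: a hole ($0$) at the bottom and a particle ($1$) at the top; the horizontal lines carry the parameters $y_j$ and the boundary words $m$, $w$, which are left untouched throughout.

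First I would introduce $R(x_i/x_{i+1})$, the six-vertex $R$-matrix attached to two vertical lines of capacity $1$; by \cref{theorem:weightsoftheYBE} together with the change of variables of the Remark following \cref{def:6v-wavefunction}, it intertwines precisely the weights \eqref{weights:defining-H}, and the only elementary facts needed are that its empty vertex and its fully occupied vertex both have weight $1$. I would place $R(x_i/x_{i+1})$ between the $i$-th and $(i+1)$-st vertical lines just below the bottom boundary. Both incoming vertical edges being holes, conservation forces the configuration in which the two outgoing edges are also holes, of weight $1$, so this insertion does not change the partition function.

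Next I would transport $R(x_i/x_{i+1})$ upward, one horizontal line at a time. Each step is an instance of \eqref{eq:yangbaxter} (equivalently \eqref{graph-master}) for the triple made of the $i$-th vertical line, the $(i+1)$-st vertical line, and the horizontal line being crossed, all with capacity $1$, i.e. the six-vertex Yang--Baxter equation for the weights \eqref{weights:defining-H}. Passing $R$ past a horizontal line reverses the order in which that line meets the two vertical lines, so once $R$ has crossed every row the two vertical lines have been exchanged --- equivalently $x_i$ and $x_{i+1}$ are swapped, with all boundary data unchanged. Now $R$ lies just above the top boundary, where both incoming vertical edges are particles; conservation again forces a single configuration, the fully occupied vertex of weight $1$, which we delete. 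The two ends of this chain of equalities read $\rh_{m/w}(\dots,x_i,x_{i+1},\dots)=\rh_{m/w}(\dots,x_{i+1},x_i,\dots)$; specializing $y_j=q^{-1}$ gives the statement.

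The one point to handle with care is that the lattice may have infinitely many rows, so the argument should be set up at the level of the stabilized partition function: outside a finite window all vertical lines have already reached their frozen top value, the corresponding Yang--Baxter moves are trivial, and one checks that $R$ genuinely passes beyond the window (alternatively, argue on a finite truncation and pass to the limit). The remaining verifications --- that the weights \eqref{weights:defining-H} and their $R$-matrix satisfy \eqref{eq:yangbaxter}, and that the empty and fully occupied vertices have weight $1$ --- are routine consequences of \cref{theorem:weightsoftheYBE}.
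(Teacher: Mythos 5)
Your proof is correct and is the standard Yang--Baxter ``train'' argument; the paper itself offers no proof of this proposition, merely citing \cite{spin:BorodinPetrov2016HigherSS}, where symmetry is established by exactly this mechanism (insert the $R$-matrix at the frozen all-hole bottom boundary, drag it through every row by \eqref{eq:yangbaxter}, and remove it at the frozen all-particle top boundary). The points you flag as routine --- that the empty and fully occupied $R$-vertices have weight $1$ and that only finitely many rows are nontrivial --- are indeed the only checks required.
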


\begin{remark}
At $q=0$, $H_{m}(x_{1},\dots,x_{n})$ are symmetric Grothendieck polynomials which arise in the study of the $K$-theory of Grassmannian. In the recent literature, many papers have explored the application of lattice models to the study of these polynomials ~\cite{MS13,Ms-bosfer-ktheory,PAintegrability1,Buciumas2020DoubleGP,brubakergroth,lattice:Gunna2020VertexMF}. In~\cite{Puzzles:WZJ16Grothendieck}, the second and third authors derived puzzles for the structure constants of these polynomials by using solvable lattice models.
\end{remark}
\subsection{Higher spin vertex model}

In this subsection we consider $\tl$ to be generic and setting $\afl=1,\tm=1$ specialisation of the general weights~\eqref{generalweights}. In this setting the edge states of a vertex are tuples with a single component. The values on the vertical edges are constrained to be no greater than $\tl$, while the values on the horizontal edges are constrained to be no greater than $1$. Hence, we label the edges of a vertex with integers $0\leq a,c \leq 1$ and $0\leq b,d \leq \tl$.
\begin{equation*}
\begin{tikzpicture}[scale=0.8,baseline=(current bounding box.center)]
\draw[gray] (0,0) rectangle (1,1);
\node at (-0.5,0.5) {$b$};
\node at (1.5,0.5) {$d$};
\node at (0.5,-0.5) {$a$};
\node at (0.5,1.5) {$c$};
\draw[->] (0.5,-1.5) node[below] {$(y,1)$}--(0.5,-1);
\draw[->] (-1.5,0.5) node[left] {$(x,\tl)$}--(-1,0.5);
\end{tikzpicture}=W_{\tl,1}\left(\dfrac{x}{y};q;a,b,c,d\right)
\end{equation*} 
There are four types of vertices where the conservation is satisfied (notation is simplified by omitting the spin labels):
\begin{equation}
\label{weights:def-spinHl}
\begin{tabular}{ccccc}
\begin{tikzpicture}[scale=1,baseline=-2pt]
 \draw[gray] (0,0) rectangle (1,1); 
\node at (0.5,-0.3) {$\ss A$};
\node at (-0.3,0.5) {$\ss B$};
\node at (0.5,1.3) {$\ss C$};
\node at (1.3,0.5) {$\ss D$};
\draw[->] (0.5,-1) node[below] {$\ss x$}--(0.5,-0.7);
\draw[->] (-1,0.5) node[left] {$\ss s$}--(-0.7,0.5);
\end{tikzpicture}
&
\begin{tikzpicture}[scale=1,baseline=-2pt]
\draw[gray] (0,0) rectangle (1,1); 
\draw[->,rounded corners] (0,0.5)--(1,0.5);
\draw[->,rounded corners] (0,0.6)--(0.5,0.6)--(0.5,1);
\draw[->,rounded corners] (0,0.4)--(1,0.4);
\draw[->,rounded corners] (0.5,0)--(0.5,0.3)--(1,0.3);
\node at (0.5,-0.3) {$\ss 1$};
\node at (-0.3,0.5) {$\ss m$};
\node at (0.5,1.3) {$\ss 1$};
\node at (1.3,0.5) {$\ss m$};
\end{tikzpicture}
&
\begin{tikzpicture}[scale=1,baseline=-2pt]
 \draw[gray] (0,0) rectangle (1,1); 
\draw[->,rounded corners] (0,0.4)--(1,0.4);
\draw[->,rounded corners] (0,0.5)--(1,0.5);
\draw[->,rounded corners] (0,0.6)--(0.5,0.6)--(0.5,1);
\node at (0.5,-0.3) {$\ss 0$};
\node at (-0.3,0.5) {$\ss m$};
\node at (0.5,1.3) {$\ss 1$};
\node at (1.5,0.5) {$\ss m-1$};
\end{tikzpicture}
&
\begin{tikzpicture}[scale=1,baseline=-2pt]
 \draw[gray] (0,0) rectangle (1,1); 
\draw[->] (0,0.7)--(1,0.7);
\draw[->,rounded corners] (0,0.5)--(1,0.5);
\draw[->,rounded corners] (0,0.6)--(1,0.6);
\draw[->,rounded corners] (0.5,0)--(0.5,0.4)--(1,0.4);
\node at (0.5,-0.3) {$\ss 1$};
\node at (-0.3,0.5) {$\ss m$};
\node at (0.5,1.3) {$\ss 0$};
\node at (1.5,0.5) {$\ss m+1$};
\end{tikzpicture}
&
\begin{tikzpicture}[scale=1,baseline=-2pt]
 \draw[gray] (0,0) rectangle (1,1); 
\draw[->] (0,0.4)--(1,0.4);
\draw[->] (0,0.5)--(1,0.5);
\draw[->] (0,0.6)--(1,0.6);
\node at (0.5,-0.3) {$\ss 0$};
\node at (-0.3,0.5) {$\ss m$};
\node at (0.5,1.3) {$\ss 0$};
\node at (1.5,0.5) {$\ss m$};
\end{tikzpicture}\\
& $\ss \dfrac{1-q^{m}x s}{x-s}$
& $\ss \dfrac{1-q^{m}}{x-s}$
& $\ss \dfrac{(1-q^{m}s^{2})x}{x-s}$
& $\ss \dfrac{x-q^{m}s}{x-s} $
\end{tabular}
\end{equation}  
\begin{remark}
The weights $W_{\tl,1}$ given in \eqref{generalweights} do not include the parameter $s$. Here, we briefly outline how to derive these weights from the general ones. They are dependent on $\tl$ via $q^{\tl}$. After the appropriate normalisation, the Yang--Baxter equation transforms into a polynomial in the variable $q^{\tl}$, which holds for infinitely many non-negative values of $\tl$. These weights are obtained by substituting $q^{-\tl}=s^{2}$. 

The exact derivation of these weights from (\ref{generalweights}) is given in (\ref{subsec:spin_regionA}). We also refer the reader to~\cite[Appendix C.2]{lattice:borodin2018coloured} for a detailed explanation of this derivation.
\end{remark}
\begin{dfn}
\label{def:spinHl}
Fix positive integers $P$ and $n$. Let $\mathbf{m}=(m_1 ,\dots,m_{P})$ be a $P$-tuple of non-negative integers such that $\sum^{P}_{i=1}m_{i}=n$. We define the \emph{spin Hall--Littlewood functions} $\mathrm{F}_{\bm}(x_1,\dots x_n;s)$ as the partition function of 
\begin{equation}
\label{lattice:definition_spinHL}
\begin{tikzpicture}[scale=0.6,baseline=(current bounding box.center)]
\draw (0:0) coordinate (A)--++(90:6) coordinate (B)--++(0:4) coordinate (C)-- ++(-90:6) coordinate (D)--cycle;
\path (A)++(0:-0.7)++(90:5.5) node[black] {$\ss m_{1}$};
\path (A)++(0:-0.7)++(90:4.5) node[black] {$\ss m_{2}$};
\path (A)++(0:-0.5)++(90:2) node[rotate=90] {$\ldots$};
\path (A)++(0:-0.5)++(90:3) node[rotate=90] {$\ldots$};
\path (A)++(0:-0.7)++(90:0.5) node[black] {$\ss m_{P}$};
\foreach\x in {1,2,3,4,5,6}{
\draw[fill=white] (0:0)++(0:4)++(90:\x-0.5) circle (0.1);};
\foreach\x in {1,2,3,4,5}{
\draw[lightgray] (0:0)++(90:\x)--++(0:4);};
\foreach\x in {1,2,3}{
\draw[lightgray] (0:0)++(0:\x)--++(90:6);};
\foreach\x in {1,2,3,4}{
\draw[fill=white] (0:0)++(0:\x-0.5) circle (0.1);
\draw[fill=black] (0:0)++(0:\x-0.5)++(90:6) circle (0.1);};
\draw[->] (0.5,-1) node[below] {$\ss x_1 $}--(0.5,-0.5);
\node at (2,-1) {$\ldots$} ;
\draw[->] (3.5,-1) node[below] {$\ss x_n $}--(3.5,-0.5);
\end{tikzpicture}
\end{equation}
where the above lattice is to be interpreted as a partition function, with vertex weights given in (\ref{weights:def-spinHl}).
\end{dfn}

These functions are symmetric in $(x_{1},\dots, x_{n})$. Like many other families of symmetric functions, they satisfy interesting identities such as a branching formula, Cauchy identity, and Pieri rules. They are also invariant under padding $\bm$ with extra zeroes.

By setting $s=0$, we recover the lattice model for Hall--Littlewood polynomials introduced in ~\cite{hall:Tsilevich2005QuantumIS}, with inverted variables. In this paper, we derive a combinatorial formula in terms of {\em higher spin puzzles} for the structure constants of spin Hall--Littlewood functions.

\subsection{Relation to Borodin's \emph{spin Hall Littlewood functions}}
 The functions that we consider in this paper are slight modifications of those considered in ~\cite{lattice:spinborodin2014}. We recall the definition of these functions; we shall denote them by $\mathcal{F}$.

\begin{dfn}
\label{def:borodin_spinHl}
Fix positive integers $P$ and $n$. Let $\textbf{m}=(m_1 ,\dots,m_{P})$ be a $P$-tuple of non-negative integers such that $\sum^{P}_{i=1}m_{i}=n$. We define \emph{Borodin's spin Hall--Littlewood functions} $\mathcal{F}_{\bm}(x_1,\dots x_n;s)$ as the partition function of 

\begin{equation}
\label{lattice:definition-borodin_spinHL}
\begin{tikzpicture}[scale=0.6,baseline=(current bounding box.center)]
\draw (0:0) coordinate (A)--++(90:4) coordinate (B)--++(0:6) coordinate (C)-- ++(-90:4) coordinate (D)--cycle;
\path (A)++(90:4.5)++(0:0.5) node[black] {$\ss m_{1}$};
\path (A)++(90:4.5)++(0:1.5) node[black] {$\ss m_{2}$};
\path (A)++(90:4.5)++(0:3.5) node {$\ldots$};
\path (A)++(90:4.5)++(0:5.5) node[black] {$\ss m_{P}$};
\foreach\x in {1,2,3,4,5,6}{
\draw[fill=white] (0:0)++(0:\x-0.5) circle (0.1);};
\foreach\x in {1,2,3}{
\draw[lightgray] (0:0)++(90:\x)--++(0:6);};
\foreach\x in {1,2,3,4,5}{
\draw[lightgray] (0:0)++(0:\x)--++(90:4);};
\foreach\x in {1,2,3,4}{
\draw[fill=white] (0:0)++(0:6)++(90:\x-0.5) circle (0.1);
\draw[fill=black] (0:0)++(90:\x-0.5) circle (0.1);};
\draw[->] (-1,0.5) node[left] {$\ss x_n $}--(-0.5,0.5) ;
\node[rotate=90] at (-1,2) {$\ldots$} ;
\draw[->] (-1,3.5) node[left] {$\ss x_1 $}--(-0.5,3.5);
\end{tikzpicture}
\end{equation}

\

where the above lattice is to be interpreted as a partition function, with vertex weights given below:
\begin{align}
\label{weights:def-borodin_spinHL}
\begin{tabular}{ccccc}
\begin{tikzpicture}[scale=1,baseline=-2pt]
 \draw[gray] (0,0) rectangle (1,1); 
\node at (0.5,-0.3) {$\ss A$};
\node at (-0.3,0.5) {$\ss B$};
\node at (0.5,1.3) {$\ss C$};
\node at (1.3,0.5) {$\ss D$};
\draw[->] (0.5,-1) node[below] {$\ss s$}--(0.5,-0.7);
\draw[->] (-1,0.5) node[left] {$\ss x$}--(-0.7,0.5);
\end{tikzpicture}
&
\begin{tikzpicture}[scale=1,baseline=-2pt]
\draw[gray] (0,0) rectangle (1,1); 
\draw[->,rounded corners] (0.4,0)--(0.4,1);
\draw[->,rounded corners] (0.5,0)--(0.5,1);
\draw[->,rounded corners] (0.6,0)--(0.6,1);
\node at (-0.3,0.5) {$\ss 0$};
\node at (0.5,-0.3) {$\ss m$};
\node at (1.3,0.5) {$\ss 0$};
\node at (0.5,1.3) {$\ss m$};
\end{tikzpicture}
&
\begin{tikzpicture}[scale=1,baseline=-2pt]
\draw[gray] (0,0) rectangle (1,1); 
\draw[->,rounded corners] (0.3,0)--(0.3,1);
\draw[->,rounded corners] (0.4,0)--(0.4,1);
\draw[->,rounded corners] (0.5,0)--(0.5,1);
\draw[->,rounded corners] (0.6,0)--(0.6,0.5)--(1,0.5);
\node at (-0.3,0.5) {$\ss 0$};
\node at (0.5,-0.3) {$\ss m+1$};
\node at (1.3,0.5) {$\ss 1$};
\node at (0.5,1.3) {$\ss m$};
\end{tikzpicture}
&
\begin{tikzpicture}[scale=1,baseline=-2pt]
\draw[gray] (0,0) rectangle (1,1); 
\draw[->,rounded corners] (0,0.5)--(0.4,0.5)--(0.4,1);
\draw[->,rounded corners] (0.5,0)--(0.5,1);
\draw[->,rounded corners] (0.6,0)--(0.6,1);
\draw[->,rounded corners] (0.7,0)--(0.7,1);
\node at (-0.3,0.5) {$\ss 1$};
\node at (0.5,-0.3) {$\ss m$};
\node at (1.3,0.5) {$\ss 0$};
\node at (0.5,1.3) {$\ss m+1$};
\end{tikzpicture}
&
\begin{tikzpicture}[scale=1,baseline=-2pt]
\draw[gray] (0,0) rectangle (1,1); 
\draw[->,rounded corners] (0,0.5)--(0.4,0.5)--(0.4,1);
\draw[->,rounded corners] (0.5,0)--(0.5,1);
\draw[->,rounded corners] (0.6,0)--(0.6,1);
\draw[->,rounded corners] (0.7,0)--(0.7,0.5)--(1,0.5);
\node at (-0.3,0.5) {$\ss 1$};
\node at (0.5,-0.3) {$\ss m$};
\node at (1.3,0.5) {$\ss 1$};
\node at (0.5,1.3) {$\ss m$};
\end{tikzpicture}\\
& $\ss \dfrac{1-s x q^{m}}{1-sx}$
& $\ss \dfrac{(1-s^{2} q^{m})x}{1-sx}$
& $\ss \dfrac{(1-q^{m+1})}{1-sx}$
& $\ss \dfrac{x-s q^{m}}{1-sx} $
\end{tabular}
\end{align} 
\end{dfn}

Furthermore, they have the following symmetrization formula which we recall here:

\

\begin{thm}~\cite{lattice:spinborodin2014}
For a tuple of non negative integers $\bm=(m_{1},m_{2},\dots)$ with $\sum_{i}m_{i}=n$, we attach a partition $\mu=(\mu_{1}\geq\mu_{2}\cdots\geq \mu_{n}\geq 0)$ such that $m_{i}=|\{j|\mu_{j}=i-1\}|$. We have the following formula:
\begin{equation}
\label{eq:symmetrisationformula_borodinspin}
\mathcal{F}_{\bm} (x_{1},\dots,x_{n};s)=\dfrac{(1-q)^{n}}{\prod^{n}_{i=1}(1-s x_{i})} \sum_{\sigma\in S_{n}}\sigma  \left(\prod_{1\leq i<j\leq n}
\dfrac{x_{i}-q x_{j}}{x_{i}-x_{j}} \cdot \prod^{n}_{i=1} \left(\dfrac{x_{i}-s}{1-s x_{i}}\right)^{\mu_{i}}\right)
\end{equation}
where $S_{n}$ is the symmetric group on $n$ letters.
\end{thm}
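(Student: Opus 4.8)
The plan is to use the integrable structure directly: realise $\mathcal{F}_\bm$ as a matrix element of a string of row operators and then run the Yang--Baxter / coordinate-Bethe-ansatz argument. Reading the lattice \eqref{lattice:definition-borodin_spinHL} one row at a time, the four vertices of \eqref{weights:def-borodin_spinHL} assemble into the $2\times2$ monodromy matrix $\mathcal{M}(x)=L_P(x)\cdots L_1(x)$ acting on (the two-dimensional horizontal space) $\otimes$ (the product of the column Fock spaces), each $L_k(x)$ carrying the common column parameter $s$. Let $\mathfrak{C}(x)$ be the entry of $\mathcal{M}(x)$ that feeds a $1$ in on the left of a row and reads a $0$ out on the right; it raises the total particle number by one. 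Since the bottom row carries $x_n$, stacking the rows gives
\begin{equation*}
\mathcal{F}_\bm(x_1,\dots,x_n;s)=\bra{\bm}\,\mathfrak{C}(x_1)\mathfrak{C}(x_2)\cdots\mathfrak{C}(x_n)\,\ket{0^P},
\end{equation*}
with $\ket{0^P}$ the all-empty state and $\bra{\bm}$ the dual vector for column occupations $(m_1,\dots,m_P)$.

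Theorem~\ref{theorem:weightsoftheYBE} at $\afl=1$, $\tl=\tm=1$ gives a six-vertex $R$-matrix acting on the two horizontal spaces; the relation $R\,\mathcal{M}_1\mathcal{M}_2=\mathcal{M}_2\mathcal{M}_1\,R$ then yields an exchange relation for $\mathfrak{C}(x)\mathfrak{C}(y)$ whose leading term is $\tfrac{x-qy}{x-y}\,\mathfrak{C}(y)\mathfrak{C}(x)$, plus a correction involving the diagonal monodromy entries. One then evaluates $\bra{\bm}\mathfrak{C}(x_1)\cdots\mathfrak{C}(x_n)\ket{0^P}$ by the coordinate Bethe ansatz: expand each $\mathfrak{C}(x_i)$ over the column where it deposits its particle (and, when it meets an already-populated site, where it hops one), commute the operators into a reference ordering, and resum. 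In the outcome, the factor $\tfrac{(1-q)^n}{\prod_i(1-sx_i)}$ comes from the $(1-sx_i)^{-1}$ and $(1-q)$ pieces of the vertex weights, the monomial $\prod_i z_i^{\mu_i}$ with $z_i=\tfrac{x_i-s}{1-sx_i}$ records the reference deposition pattern fixed by $\bm$ (equivalently by $\mu$), and the sum over $S_n$ weighted by $\prod_{i<j}\tfrac{x_i-qx_j}{x_i-x_j}$ is exactly the accumulated exchange coefficients. For $n=1$ this is immediate: one row deposits in the unique column $j$ with $m_j=1$, the $j-1$ vertices to its left contribute $z_1$ each and the vertex at $j$ contributes $\tfrac{1-q}{1-sx_1}$, giving $\tfrac{1-q}{1-sx_1}z_1^{\mu_1}$ since $\mu_1=j-1$.

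The delicate point is the resummation: the exchange relation for $\mathfrak{C}$ is not a bare commutation but carries extra terms (traceable to the ``hopping'' vertex of weight $\tfrac{(1-s^2q^m)x}{1-sx}$), so one must check that after cancellations the coefficient of every ordered configuration is precisely $\prod_{i<j}\tfrac{x_i-qx_j}{x_i-x_j}$ times the prefactor. A concrete substitute that avoids the operator algebra is induction on $n$ by peeling the top row: this produces $\mathcal{F}_\bm(x_1,\dots,x_n;s)=\sum_{\tilde\bm}T_{\bm\tilde\bm}(x_1;s)\,\mathcal{F}_{\tilde\bm}(x_2,\dots,x_n;s)$ over tuples $\tilde\bm$ with $|\tilde\bm|=n-1$, where each single-row transfer-matrix entry $T_{\bm\tilde\bm}(x_1;s)$ is a single explicit product of the four weights of \eqref{weights:def-borodin_spinHL} (the row's horizontal edge profile being forced by $\bm$ and $\tilde\bm$, and nonvanishing only when its drops and rises alternate appropriately); one then verifies, by the classical manipulation of pulling one variable out of a $q$-symmetriser, that the right-hand side of \eqref{eq:symmetrisationformula_borodinspin} satisfies the same recursion. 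Either way, the crux is a finite $q$-series identity identifying the lattice weights with the Hall--Littlewood branching coefficients.
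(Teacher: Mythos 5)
First, a point of comparison: the paper does not prove this statement at all --- it is quoted directly from Borodin \cite{lattice:spinborodin2014} (the citation is part of the theorem header), so there is no internal proof to measure your argument against. Judged on its own terms, your proposal is a strategy outline rather than a proof. The architecture --- write $\mathcal{F}_{\bm}$ as $\bra{\bm}\mathfrak{C}(x_1)\cdots\mathfrak{C}(x_n)\ket{0^P}$ and either run a Bethe-ansatz computation or induct on $n$ by peeling one row --- is indeed the route taken in the cited literature, and your $n=1$ verification is correct. But both of your routes stop exactly where the work begins: in the first you concede that ``one must check that after cancellations the coefficient of every ordered configuration is precisely $\prod_{i<j}\tfrac{x_i-qx_j}{x_i-x_j}$,'' and in the second that ``the crux is a finite $q$-series identity.'' Neither identity is stated, let alone proved, so the argument has a genuine gap at its core.

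There is also a structural misstatement in the first route. For this model the row operators $\mathfrak{C}(x)$ commute (up to an overall symmetric normalisation), $\mathfrak{C}(x)\mathfrak{C}(y)=\mathfrak{C}(y)\mathfrak{C}(x)$ --- this is precisely how one proves that $\mathcal{F}_{\bm}$ is symmetric in the $x_i$ --- so there is no exchange relation ``with leading term $\tfrac{x-qy}{x-y}\,\mathfrak{C}(y)\mathfrak{C}(x)$ plus corrections involving the diagonal monodromy entries''; corrections of that type occur in the $\mathfrak{A}\mathfrak{C}$ and $\mathfrak{D}\mathfrak{C}$ relations, not in $\mathfrak{C}\mathfrak{C}$. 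Consequently the factor $\prod_{i<j}\tfrac{x_i-qx_j}{x_i-x_j}$ cannot be produced as ``accumulated exchange coefficients''; it arises from the plane-wave structure of the coordinate Bethe-ansatz eigenfunctions, equivalently from the resummation in the row-peeling induction. The viable path is your second one: derive the one-row branching rule $\mathcal{F}_{\bm}(x_1,\dots,x_n;s)=\sum_{\tilde{\bm}}T_{\bm\tilde{\bm}}(x_1;s)\,\mathcal{F}_{\tilde{\bm}}(x_2,\dots,x_n;s)$ explicitly from the four weights of \eqref{weights:def-borodin_spinHL}, then show that the right-hand side of \eqref{eq:symmetrisationformula_borodinspin} satisfies the same recursion by extracting $x_1$ from the $q$-symmetriser; that telescoping identity is the actual content of the theorem and needs to be written out.
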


\

We now proceed to derive a relationship between $\mathcal{F}_{\mathbf{m}}$ and $\mathrm{F}_{\mathbf{m}}$ and thereby deriving a symmetrization formula for $\mathrm{F}_{\mathbf{m}}$.

\begin{prop}
\label{prop:relationbetween the two f's}
For a tuple of non negative integers $\bm=(m_{1},m_{2},\dots)$ with $\sum_{i}m_{i}=n$, the following equality holds:
\begin{equation}\label{eq:borodinf_relation}
 \mathrm{F}_{\bm}(x_{1},\dots,x_{n})=\left(\prod^{n}_{i=1} x_{i}^{-1}\right)\mathcal{F}_{\bm}(x_{1}^{-1},\dots,x_{n}^{-1};s).
\end{equation}
\end{prop}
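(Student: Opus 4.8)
The plan is to prove~\eqref{eq:borodinf_relation} directly at the level of lattice models, by matching the partition function of~\eqref{lattice:definition_spinHL} with that of~\eqref{lattice:definition-borodin_spinHL} after inverting the variables. The crucial observation is that these two lattices are \emph{combinatorially the same} lattice, read along opposite diagonals: in both pictures particles enter from the left and exit through the top, but in~\eqref{lattice:definition_spinHL} the capacity-$\tl$ lines (the ones carrying $\bm$ and the parameter $s$) run horizontally, while in~\eqref{lattice:definition-borodin_spinHL} they run vertically. So the first step is to set up the bijection on configurations: identify the vertex of $\mathrm{F}_{\bm}$ in row $\rho$ from the top and column $\gamma$ from the left with the vertex of $\mathcal{F}_{\bm}$ in row $\gamma$ from the top and column $\rho$ from the left, so that the left, right, top and bottom edges of the former are identified respectively with the top, bottom, left and right edges of the latter. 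Under this identification the boundary data agree exactly: $\mathrm{F}$'s left labels $m_1,\dots,m_P$ become $\mathcal{F}$'s top labels, $\mathrm{F}$'s all-black top boundary becomes $\mathcal{F}$'s all-black left boundary, and the all-white bottom and right boundaries are exchanged. Since conservation and the capacity bounds are symmetric under swapping the two axes, valid configurations are sent to valid configurations and we obtain a bijection.

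The second step is to compare the vertex weights, of which there are only four on each side (see~\eqref{weights:def-spinHl} and~\eqref{weights:def-borodin_spinHL}). The identification above pairs the two ``straight'' vertices (where the capacity-$1$ line passes through without turning) on one side with the two ``straight'' vertices on the other, and the two ``turning'' vertices with the two ``turning'' vertices. For the straight vertices the match is exact after the substitution $x\mapsto 1/x$: the weights $\tfrac{1-q^{m}xs}{x-s}$ and $\tfrac{x-q^{m}s}{x-s}$ are obtained from $\tfrac{x-sq^{m}}{1-sx}$ and $\tfrac{1-sxq^{m}}{1-sx}$ by this substitution. For each turning vertex the same substitution produces the correct $\mathrm{F}$-weight only up to a factor $x_\gamma^{\pm1}$, where $x_\gamma$ is the spectral variable attached to the column of the $\mathrm{F}$-vertex: a vertex where the capacity-$1$ particle joins the capacity-$\tl$ line carries an extra $x_\gamma^{-1}$, and one where it leaves the capacity-$\tl$ line carries an extra $x_\gamma$.

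The third step is to accumulate these scalar factors over a configuration. Along column $\gamma$ of the $\mathrm{F}_{\bm}$-lattice the capacity-$1$ edge equals $0$ at the bottom and $1$ at the top by the boundary conditions of~\eqref{lattice:definition_spinHL}; hence, reading the column from bottom to top, the number of ``join'' vertices exceeds the number of ``leave'' vertices by exactly one, so the product of the scalar factors along that column telescopes to $x_\gamma^{-1}$, with no residual dependence on $q$ or $s$. Taking the product over all $n$ columns gives $\prod_{i=1}^{n}x_i^{-1}$, and summing the resulting weight identity over configurations yields~\eqref{eq:borodinf_relation}. The main obstacle, and essentially the only delicate point, is this last bookkeeping: one must keep careful track of which spectral parameter appears in each of the four weights after the reflection, confirm that the turning factors are precisely $x_\gamma^{\pm1}$ (and carry no stray power of $q$ or $s$), and verify that the number of ``join'' minus ``leave'' vertices in each column is always $1$ irrespective of how the particle trajectories are routed; conservation together with the $0$/$1$ column boundary makes this robust, but it is where an off-by-one error or a leftover factor would hide.
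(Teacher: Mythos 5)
Your proposal is correct and follows essentially the same route as the paper: reflect the lattice defining $\mathrm{F}_{\bm}$ along the diagonal, match the four vertex weights with Borodin's under $x\mapsto x^{-1}$, and observe that the residual $x^{\pm1}$ factors at the two turning vertices telescope along each capacity-one line to a single $x_i^{-1}$. One small caution on the bookkeeping you rightly flag as delicate: with the column boundary condition $0$ at the bottom and $1$ at the top, it is the vertices where the particle \emph{leaves} the capacity-$\tl$ line (vertical edge $0\to1$, the ones carrying the extra $x_\gamma^{-1}$) that outnumber the joins by one, not the reverse as written — the two swaps in your labelling cancel, so the conclusion $\prod_{i}x_i^{-1}$ stands.
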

\begin{proof}
We derive this equality by comparing the weights used to define $\mathrm{F}$ to those of $\mathcal{F}$. We begin with the observation that the lattice ~(\ref{lattice:definition_spinHL}) used to define $\mathrm{F}$, when reflected along NW-to-SE diagonal, is same as the lattice ~(\ref{lattice:definition-borodin_spinHL}) used to define $\mathcal{F}$. For convenience, we recall the weights ~(\ref{weights:def-spinHl}) used to define $\mathrm{F}$:
\begin{equation}
\begin{tabular}{ccccc}
\begin{tikzpicture}[scale=1,baseline=-2pt]
 \draw[gray] (0,0) rectangle (1,1); 
\node at (0.5,-0.3) {$\ss A$};
\node at (-0.3,0.5) {$\ss B$};
\node at (0.5,1.3) {$\ss C$};
\node at (1.3,0.5) {$\ss D$};
\draw[->] (0.5,-1) node[below] {$\ss x$}--(0.5,-0.7);
\draw[->] (-1,0.5) node[left] {$\ss s$}--(-0.7,0.5);
\end{tikzpicture}
&
\begin{tikzpicture}[scale=1,baseline=-2pt]
\draw[gray] (0,0) rectangle (1,1); 
\draw[->] (0,0.4)--(1,0.4);
\draw[->,rounded corners] (0,0.5)--(1,0.5);
\draw[->,rounded corners] (0,0.6)--(0.5,0.6)--(0.5,1);
\draw[->,rounded corners] (0.5,0)--(0.5,0.3)--(1,0.3);
\node at (0.5,-0.3) {$\ss 1$};
\node at (-0.3,0.5) {$\ss m$};
\node at (0.5,1.3) {$\ss 1$};
\node at (1.3,0.5) {$\ss m$};
\end{tikzpicture}
&
\begin{tikzpicture}[scale=1,baseline=-2pt]
 \draw[gray] (0,0) rectangle (1,1); 
\draw[->,rounded corners] (0,0.4)--(1,0.4);
\draw[->,rounded corners] (0,0.5)--(1,0.5);
\draw[->,rounded corners] (0,0.6)--(0.5,0.6)--(0.5,1);
\node at (0.5,-0.3) {$\ss 0$};
\node at (-0.3,0.5) {$\ss m$};
\node at (0.5,1.3) {$\ss 1$};
\node at (1.5,0.5) {$\ss m-1$};
\end{tikzpicture}
&
\begin{tikzpicture}[scale=1,baseline=-2pt]
 \draw[gray] (0,0) rectangle (1,1); 
\draw[->,rounded corners] (0,0.5)--(1,0.5);
\draw[->,rounded corners] (0,0.6)--(1,0.6);
\draw[->,rounded corners] (0,0.7)--(1,0.7);
\draw[->,rounded corners] (0.5,0)--(0.5,0.4)--(1,0.4);
\node at (0.5,-0.3) {$\ss 1$};
\node at (-0.3,0.5) {$\ss m$};
\node at (0.5,1.3) {$\ss 0$};
\node at (1.5,0.5) {$\ss m+1$};
\end{tikzpicture}
&
\begin{tikzpicture}[scale=1,baseline=-2pt]
 \draw[gray] (0,0) rectangle (1,1); 
\draw[->] (0,0.4)--(1,0.4);
\draw[->] (0,0.5)--(1,0.5);
\draw[->] (0,0.6)--(1,0.6);
\node at (0.5,-0.3) {$\ss 0$};
\node at (-0.3,0.5) {$\ss m$};
\node at (0.5,1.3) {$\ss 0$};
\node at (1.5,0.5) {$\ss m$};
\end{tikzpicture}\\
& $\ss \dfrac{1-q^{m}x s}{x-s}$
& $\ss \dfrac{1-q^{m}}{x-s}$
& $\ss \dfrac{(1-q^{m}s^{2})x}{x-s}$
& $\ss \dfrac{x-q^{m}s}{x-s} $
\end{tabular}
\end{equation} 

When the above vertices are reflected along the main diagonal they take the following form:

\begin{equation}
\label{weights:matching with borodin_flipped}
\begin{tabular}{ccccc}
\begin{tikzpicture}[scale=1,baseline=-2pt]
 \draw[gray] (0,0) rectangle (1,1); 
\node at (0.5,-0.3) {$\ss A$};
\node at (-0.3,0.5) {$\ss B$};
\node at (0.5,1.3) {$\ss C$};
\node at (1.3,0.5) {$\ss D$};
\draw[->] (0.5,-1) node[below] {$\ss s$}--(0.5,-0.7);
\draw[->] (-1,0.5) node[left] {$\ss x$}--(-0.7,0.5);
\end{tikzpicture}
&
\begin{tikzpicture}[scale=1,baseline=-2pt]
\draw[gray] (0,0) rectangle (1,1); 
\draw[->,rounded corners] (0,0.5)--(0.3,0.5)--(0.3,1);
\draw[->,rounded corners] (0.4,0)--(0.4,1);
\draw[->,rounded corners] (0.5,0)--(0.5,1);
\draw[->,rounded corners] (0.6,0)--(0.6,0.5)--(1,0.5);
\node at (-0.3,0.5) {$\ss 1$};
\node at (0.5,-0.3) {$\ss m$};
\node at (1.3,0.5) {$\ss 1$};
\node at (0.5,1.3) {$\ss m$};
\end{tikzpicture}
&
\begin{tikzpicture}[scale=1,baseline=-2pt]
\draw[gray] (0,0) rectangle (1,1); 
\draw[->,rounded corners] (0,0.5)--(0.4,0.5)--(0.4,1);
\draw[->,rounded corners] (0.5,0)--(0.5,1);
\draw[->,rounded corners] (0.6,0)--(0.6,1);
\node at (-0.3,0.5) {$\ss 1$};
\node at (0.5,-0.3) {$\ss m-1$};
\node at (1.3,0.5) {$\ss 0$};
\node at (0.5,1.3) {$\ss m$};
\end{tikzpicture}
&
\begin{tikzpicture}[scale=1,baseline=-2pt]
\draw[gray] (0,0) rectangle (1,1); 
\draw[->,rounded corners] (0.3,0)--(0.3,1);
\draw[->,rounded corners] (0.4,0)--(0.4,1);
\draw[->,rounded corners] (0.5,0)--(0.5,1);
\draw[->,rounded corners] (0.6,0)--(0.6,0.5)--(1,0.5);
\node at (-0.3,0.5) {$\ss 0$};
\node at (0.5,-0.3) {$\ss m+1$};
\node at (1.3,0.5) {$\ss 1$};
\node at (0.5,1.3) {$\ss m$};
\end{tikzpicture}
&
\begin{tikzpicture}[scale=1,baseline=-2pt]
\draw[gray] (0,0) rectangle (1,1); 
\draw[->,rounded corners] (0.4,0)--(0.4,1);
\draw[->,rounded corners] (0.5,0)--(0.5,1);
\draw[->,rounded corners] (0.6,0)--(0.6,1);
\node at (-0.3,0.5) {$\ss 0$};
\node at (0.5,-0.3) {$\ss m$};
\node at (1.3,0.5) {$\ss 0$};
\node at (0.5,1.3) {$\ss m$};
\end{tikzpicture}\\
& $\ss \dfrac{x^{-1}-q^{m} s}{1-s x^{-1}}$
& $\ss \dfrac{(1-q^{m})x^{-1}}{1-sx^{-1}}$
& $\ss \dfrac{(1-q^{m}s^{2})}{1-sx^{-1}}$
& $\ss \dfrac{1-s x^{-1}q^{m}}{1-sx^{-1}} $
\end{tabular}
\end{equation} 

\

Observe that the weights in~(\ref{weights:matching with borodin_flipped}) closely match those in~(\ref{weights:def-borodin_spinHL}) under the substitution  $x \mapsto x^{-1}$. However, the match is not exact due to a discrepancy involving an extra factor of $x^{-1}$ between the second and third vertex types.

In each configuration of the lattice~(\ref{lattice:definition-borodin_spinHL}), every row contains exactly one more vertex of the second type than of the third. As a result, the discrepancy contributes an overall factor of $x_1^{-1} \cdots x_n^{-1}$ to the partition function of the lattice~(\ref{lattice:definition-borodin_spinHL}) with weights~(\ref{weights:matching with borodin_flipped}), when compared to the partition function of the same lattice with weights~(\ref{weights:def-borodin_spinHL}) (after applying the substitution $x\mapsto x^{-1}$).
\end{proof}

\begin{cor}
Fix a tuple of non-negative integers $\bm$ with the corresponding partition $\mu$. The following symmetrization formula holds:
\begin{equation}\label{eq:symmformulafor_spinHL}
   \mathrm{F}_{\bm}(x_{1},\dots,x_{n};s)= \dfrac{(1-q)^{n}}{\prod^{n}_{i=1}(x_{i}-s)} \sum_{\sigma\in S_{n}}\sigma  \left(\prod_{1\leq i<j\leq n}
\dfrac{x_{j}-q x_{i}}{x_{j}-x_{i}} \cdot \prod^{n}_{i=1} \left(\dfrac{1-sx_{i}}{x_{i}-s}\right)^{\mu_{i}}\right)
\end{equation}
\end{cor}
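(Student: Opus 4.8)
The plan is to deduce \eqref{eq:symmformulafor_spinHL} directly from \cref{prop:relationbetween the two f's} and Borodin's symmetrization formula \eqref{eq:symmetrisationformula_borodinspin}, with no further lattice-model input. Starting from the identity $\mathrm{F}_{\bm}(x_1,\dots,x_n;s)=\bigl(\prod_{i=1}^{n}x_i^{-1}\bigr)\,\mathcal{F}_{\bm}(x_1^{-1},\dots,x_n^{-1};s)$ of \cref{prop:relationbetween the two f's}, I would substitute $x_i\mapsto x_i^{-1}$ everywhere in \eqref{eq:symmetrisationformula_borodinspin} to obtain an explicit expression for $\mathcal{F}_{\bm}(x_1^{-1},\dots,x_n^{-1};s)$ as a sum over $S_n$, and then simplify.

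The rest is bookkeeping of that substitution. For the symmetric prefactor: after $x_i\mapsto x_i^{-1}$ it becomes $(1-q)^n/\prod_i(1-sx_i^{-1})$, and multiplying by the symmetric factor $\prod_i x_i^{-1}$ and using $x_i^{-1}/(1-sx_i^{-1})=1/(x_i-s)$ produces the claimed prefactor $(1-q)^n/\prod_i(x_i-s)$. For the terms inside the symmetrization, each factor is rewritten by an identity of rational functions: multiplying numerator and denominator by $x_ix_j$ gives $(x_i^{-1}-qx_j^{-1})/(x_i^{-1}-x_j^{-1})=(x_j-qx_i)/(x_j-x_i)$, and multiplying by $x_i$ gives $(x_i^{-1}-s)/(1-sx_i^{-1})=(1-sx_i)/(x_i-s)$, whose $\mu_i$-th power appears in \eqref{eq:symmformulafor_spinHL}. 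Collecting these pieces yields \eqref{eq:symmformulafor_spinHL}.

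There is no genuine obstacle here, since the statement is essentially a restatement of \eqref{eq:symmetrisationformula_borodinspin} after the change of variables; the only step requiring a line of care is that the rewriting of the rational functions commutes with the $S_n$-action (the displayed equalities hold in the field of rational functions in $x_1,\dots,x_n$, hence are stable under permuting the $x_i$), and that $\prod_i x_i^{-1}$, being $S_n$-invariant, may be moved freely in or out of the symmetrization. As a byproduct, this also re-derives the symmetry of $\mathrm{F}_{\bm}$ in $x_1,\dots,x_n$.
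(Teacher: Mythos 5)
Your proposal is correct and is exactly the derivation the paper intends: the corollary is stated immediately after \cref{prop:relationbetween the two f's} with no separate proof, precisely because it follows by substituting $x_i\mapsto x_i^{-1}$ in \eqref{eq:symmetrisationformula_borodinspin} and absorbing the symmetric prefactor $\prod_i x_i^{-1}$, as you do. All three rational-function rewritings check out, and your remarks about $S_n$-equivariance of the substitution are the only points of care.
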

\section{Main Theorems}
 \label{sec:maintheorems}
\subsection{Six-vertex puzzles} Consider the tiles below. Each tile has an associated weight, as indicated underneath its graphical depiction. 
\begin{equation}
\label{weights:wavefunctions-puzzleweights}
\begin{tabular}{c@{\hskip 1cm}c@{\hskip 1cm}c@{\hskip 1cm}c@{\hskip 1cm}c}
\begin{tikzpicture}[baseline=(current bounding box.center)]
    \draw (0:0)--++(0:1)--++(90:1)--++(180:1)--++(-90:1);
    \draw[fill=red] (0:0)++(90:0.5) circle (0.1);
    \draw[fill=red] (0:0)++(0:0.5) circle (0.1); 
    \draw[fill=red] (0:0)++(0:1)++(90:0.5) circle (0.1);
    \draw[fill=red] (0:0)++(0:0.5)++(90:1) circle (0.1);
    \draw[red,thick] (0,0.5)--(0.5,1);
    \draw[red,thick] (0.5,0)--(1,0.5);
\end{tikzpicture}
&
\begin{tikzpicture}[baseline=(current bounding box.center)]
    \draw (0:0)--++(0:1)--++(90:1)--++(180:1)--++(-90:1);
    \draw[fill=red] (0:0)++(90:0.5) circle (0.1);
    \draw[fill=white] (0:0)++(0:0.5) circle (0.1); 
    \draw[fill=red] (0:0)++(0:1)++(90:0.5) circle (0.1);
    \draw[fill=white] (0:0)++(0:0.5)++(90:1) circle (0.1);
    \draw[red,thick] (0,0.5)--(1,0.5);
\end{tikzpicture}
&
\begin{tikzpicture}[baseline=(current bounding box.center)]
    \draw (0:0)--++(0:1)--++(90:1)--++(180:1)--++(-90:1);
    \draw[fill=white] (0:0)++(90:0.5) circle (0.1);
    \draw[fill=red] (0:0)++(0:0.5) circle (0.1); 
    \draw[fill=white] (0:0)++(0:1)++(90:0.5) circle (0.1);
    \draw[fill=red] (0:0)++(0:0.5)++(90:1) circle (0.1);
    \draw[red,thick] (0.5,0)--(0.5,1);
\end{tikzpicture}
&
\begin{tikzpicture}[baseline=(current bounding box.center)]
    \draw (0:0)--++(0:1)--++(90:1)--++(180:1)--++(-90:1);
    \draw[fill=red] (0:0)++(90:0.5) circle (0.1);
    \draw[fill=white] (0:0)++(0:0.5) circle (0.1); 
    \draw[fill=white] (0:0)++(0:1)++(90:0.5) circle (0.1);
    \draw[fill=red] (0:0)++(0:0.5)++(90:1) circle (0.1);
    \draw[red,thick] (0,0.5)--(0.5,1);
\end{tikzpicture}
&
\begin{tikzpicture}[baseline=(current bounding box.center)]
    \draw (0:0)--++(0:1)--++(90:1)--++(180:1)--++(-90:1);
    \draw[fill=white] (0:0)++(90:0.5) circle (0.1);
    \draw[fill=red] (0:0)++(0:0.5) circle (0.1); 
    \draw[fill=red] (0:0)++(0:1)++(90:0.5) circle (0.1);
    \draw[fill=white] (0:0)++(0:0.5)++(90:1) circle (0.1);
    \draw[red,thick] (0.5,0)--(1,0.5);
\end{tikzpicture}
\\[2 em]
$0$ & $1$ & $1$ & $1$ & $1$\\[2 em]
  \begin{tikzpicture}[baseline=(current bounding box.center)]
    \draw (0:0)--++(0:1)--++(90:1)--++(180:1)--++(-90:1);
    \draw[fill=cyan] (0:0)++(90:0.5) circle (0.1);
    \draw[fill=cyan] (0:0)++(0:0.5) circle (0.1); 
    \draw[fill=cyan] (0:0)++(0:1)++(90:0.5) circle (0.1);
    \draw[fill=cyan] (0:0)++(0:0.5)++(90:1) circle (0.1);
    \draw[cyan,thick] (0,0.5)--(0.5,1);
    \draw[cyan,thick] (0.5,0)--(1,0.5);
\end{tikzpicture}
&
\begin{tikzpicture}[baseline=(current bounding box.center)]
    \draw (0:0)--++(0:1)--++(90:1)--++(180:1)--++(-90:1);
    \draw[fill=cyan] (0:0)++(90:0.5) circle (0.1);
    \draw[fill=white] (0:0)++(0:0.5) circle (0.1); 
    \draw[fill=cyan] (0:0)++(0:1)++(90:0.5) circle (0.1);
    \draw[fill=white] (0:0)++(0:0.5)++(90:1) circle (0.1);
    \draw[cyan,thick] (0,0.5)--(1,0.5);
\end{tikzpicture}
&
\begin{tikzpicture}[baseline=(current bounding box.center)]
    \draw (0:0)--++(0:1)--++(90:1)--++(180:1)--++(-90:1);
    \draw[fill=white] (0:0)++(90:0.5) circle (0.1);
    \draw[fill=cyan] (0:0)++(0:0.5) circle (0.1); 
    \draw[fill=white] (0:0)++(0:1)++(90:0.5) circle (0.1);
    \draw[fill=cyan] (0:0)++(0:0.5)++(90:1) circle (0.1);
    \draw[cyan,thick] (0.5,0)--(0.5,1);
\end{tikzpicture}
&
\begin{tikzpicture}[baseline=(current bounding box.center)]
    \draw (0:0)--++(0:1)--++(90:1)--++(180:1)--++(-90:1);
    \draw[fill=cyan] (0:0)++(90:0.5) circle (0.1);
    \draw[fill=white] (0:0)++(0:0.5) circle (0.1); 
    \draw[fill=white] (0:0)++(0:1)++(90:0.5) circle (0.1);
    \draw[fill=cyan] (0:0)++(0:0.5)++(90:1) circle (0.1);
    \draw[cyan,thick] (0,0.5)--(0.5,1);
\end{tikzpicture}
&
\begin{tikzpicture}[baseline=(current bounding box.center)]
    \draw (0:0)--++(0:1)--++(90:1)--++(180:1)--++(-90:1);
    \draw[fill=white] (0:0)++(90:0.5) circle (0.1);
    \draw[fill=cyan] (0:0)++(0:0.5) circle (0.1); 
    \draw[fill=cyan] (0:0)++(0:1)++(90:0.5) circle (0.1);
    \draw[fill=white] (0:0)++(0:0.5)++(90:1) circle (0.1);
    \draw[cyan,thick] (0.5,0)--(1,0.5);
\end{tikzpicture}
\\[2 em] 
$0$ & $1$ & $1$ & $1$ & $1$\\[2 em]
  \begin{tikzpicture}[baseline=(current bounding box.center)]
    \draw (0:0)--++(0:1)--++(90:1)--++(180:1)--++(-90:1);
    \draw[fill=white] (0:0)++(90:0.5) circle (0.1);
    \draw[fill=white] (0:0)++(0:0.5) circle (0.1); 
    \draw[fill=white] (0:0)++(0:1)++(90:0.5) circle (0.1);
    \draw[fill=white] (0:0)++(0:0.5)++(90:1) circle (0.1);
\end{tikzpicture}
&
\begin{tikzpicture}[baseline=(current bounding box.center)]
    \draw (0:0)--++(0:1)--++(90:1)--++(180:1)--++(-90:1);
    \draw[fill=cyan] (0:0)++(90:0.5) circle (0.1);
    \draw[fill=red] (0:0)++(0:0.5) circle (0.1); 
    \draw[fill=cyan] (0:0)++(0:1)++(90:0.5) circle (0.1);
    \draw[fill=red] (0:0)++(0:0.5)++(90:1) circle (0.1);
    \draw[cyan,thick] (0,0.5)--(1,0.5);
    \draw[red,thick] (0.5,0)--(0.5,1);
\end{tikzpicture}
&
\begin{tikzpicture}[baseline=(current bounding box.center)]
    \draw (0:0)--++(0:1)--++(90:1)--++(180:1)--++(-90:1);
    \draw[fill=red] (0:0)++(90:0.5) circle (0.1);
    \draw[fill=cyan] (0:0)++(0:0.5) circle (0.1); 
    \draw[fill=red] (0:0)++(0:1)++(90:0.5) circle (0.1);
    \draw[fill=cyan] (0:0)++(0:0.5)++(90:1) circle (0.1);
    \draw[red,thick] (0,0.5)--(1,0.5);
    \draw[cyan,thick] (0.5,0)--(0.5,1);
\end{tikzpicture}
&
\begin{tikzpicture}[baseline=(current bounding box.center)]
    \draw (0:0)--++(0:1)--++(90:1)--++(180:1)--++(-90:1);
    \draw[fill=cyan] (0:0)++(90:0.5) circle (0.1);
    \draw[fill=red] (0:0)++(0:0.5) circle (0.1); 
    \draw[fill=red] (0:0)++(0:1)++(90:0.5) circle (0.1);
    \draw[fill=cyan] (0:0)++(0:0.5)++(90:1) circle (0.1);
    \draw[cyan,thick] (0,0.5)--(0.5,1);
    \draw[red,thick] (0.5,0)--(1,0.5);
\end{tikzpicture}
&
\begin{tikzpicture}[baseline=(current bounding box.center)]
    \draw (0:0)--++(0:1)--++(90:1)--++(180:1)--++(-90:1);
    \draw[fill=red] (0:0)++(90:0.5) circle (0.1);
    \draw[fill=cyan] (0:0)++(0:0.5) circle (0.1); 
    \draw[fill=cyan] (0:0)++(0:1)++(90:0.5) circle (0.1);
    \draw[fill=red] (0:0)++(0:0.5)++(90:1) circle (0.1);
    \draw[red,thick] (0,0.5)--(0.5,1);
    \draw[cyan,thick] (0.5,0)--(1,0.5);
\end{tikzpicture}\\[2 em] 
$0$ & $1$ & $q$ & $-1$ & $-q$\\
\end{tabular}
\end{equation}

\begin{dfn}  
Let $P$ and $N$ be two positive integers. A \emph{six-vertex puzzle} is a tiling of a rectangular region of length $P$ and height $N$ using the tiles described above. We denote by $\mathcal{C}^{k,w}_{l,m}$ the set of all six-vertex puzzles where:  
\begin{itemize}  
    \item The top boundary is determined by a binary string $l$ of length $N$, consisting of red and white particles.  
    \item The bottom boundary is determined by a binary string $w$ of length $N$, consisting of red and white particles.  
    \item The left boundary is determined by a binary string $m$ of length $P$, consisting of red and blue particles.  
    \item The right boundary is determined by a binary string $k$ of length $P$, consisting of blue and white particles.  
\end{itemize}  
\end{dfn}

\begin{ex}
We give three examples. For the first two, we have $P=5$ and $N=3$, while for the last example, we have $P=N=5$.
 \begin{center}
    \begin{tabular}{c@{\hskip 2cm}c@{\hskip 2cm}c}
    \begin{tikzpicture}[scale=0.6,baseline=(current bounding box.center)]
\draw[gray](0.5,0.5) rectangle (3.5,5.5);  
\foreach \x in {1,2,3,4}{
\draw[gray] (0.5,\x+0.5)--(3.5,\x+0.5);
};
\foreach \x in {1,2}{
\draw[gray] (\x+0.5,0.5)--(\x+0.5,5.5);
};
\ebull{1}{0.5}{0.1};
\rbull{2}{0.5}{0.1};
\ebull{3}{0.5}{0.1};
\rbull{1}{5.5}{0.1};
\ebull{2}{5.5}{0.1};
\rbull{3}{5.5}{0.1};
\cbull{0.5}{5}{0.1};
\rbull{0.5}{4}{0.1};
\cbull{0.5}{3}{0.1};
\cbull{0.5}{2}{0.1};
\cbull{0.5}{1}{0.1};
\cbull{3.5}{5}{0.1};
\cbull{3.5}{4}{0.1};
\ebull{3.5}{3}{0.1};
\cbull{3.5}{2}{0.1};
\cbull{3.5}{1}{0.1};
\draw[red,thick](0.5,4)--(1,4.5)--(1,5.5);
\draw[red,thick](2,0.5)--(2,3.5)--(3,4.5)--(3,5.5);
\draw[cyan,thick] (0.5,1)--(3.5,1);
\draw[cyan,thick] (0.5,2)--(3.5,2);
\draw[cyan,thick] (0.5,3)--(2.5,3)--(3.5,4);
\draw[cyan,thick] (0.5,5)--(3.5,5);
\end{tikzpicture}
&
\begin{tikzpicture}[scale=0.6,baseline=(current bounding box.center)]
\draw[gray](0.5,0.5) rectangle (3.5,5.5);  
\foreach \x in {1,2,3,4}{
\draw[gray] (0.5,\x+0.5)--(3.5,\x+0.5);
};
\foreach \x in {1,2}{
\draw[gray] (\x+0.5,0.5)--(\x+0.5,5.5);
};
\ebull{1}{0.5}{0.1};
\ebull{2}{0.5}{0.1};
\ebull{3}{0.5}{0.1};
\rbull{1}{5.5}{0.1};
\ebull{2}{5.5}{0.1};
\rbull{3}{5.5}{0.1};
\cbull{0.5}{5}{0.1};
\rbull{0.5}{4}{0.1};
\cbull{0.5}{3}{0.1};
\rbull{0.5}{2}{0.1};
\cbull{0.5}{1}{0.1};
\cbull{3.5}{5}{0.1};
\ebull{3.5}{4}{0.1};
\cbull{3.5}{3}{0.1};
\cbull{3.5}{2}{0.1};
\ebull{3.5}{1}{0.1};
\draw[red,thick](0.5,4)--(1,4.5)--(1,5.5);
\draw[red,thick] (0.5,2)--(1.5,2)--(2,2.5)--(2,3.5)--(3,4.5)--(3,5.5);
\draw[cyan,thick] (0.5,1)--(2.5,1)--(3.5,2);
\draw[cyan,thick] (0.5,3)--(3.5,3);
\draw[cyan,thick] (0.5,5)--(3.5,5);
\end{tikzpicture}
&
    \begin{tikzpicture}[scale=0.6,baseline=(current bounding box.center)]
\draw[gray](0.5,0.5) rectangle (5.5,5.5);  
\foreach \x in {1,2,3,4}{
\draw[gray] (0.5,\x+0.5)--(5.5,\x+0.5);
};
\foreach \x in {1,2,3,4}{
\draw[gray] (\x+0.5,0.5)--(\x+0.5,5.5);
};
\ebull{1}{0.5}{0.1};
\ebull{2}{0.5}{0.1};
\rbull{3}{0.5}{0.1};
\ebull{4}{0.5}{0.1};
\rbull{5}{0.5}{0.1};
\rbull{1}{5.5}{0.1};
\rbull{2}{5.5}{0.1};
\ebull{3}{5.5}{0.1};
\rbull{4}{5.5}{0.1};
\rbull{5}{5.5}{0.1};
\rbull{0.5}{5}{0.1};
\rbull{0.5}{4}{0.1};
\cbull{0.5}{3}{0.1};
\cbull{0.5}{2}{0.1};
\cbull{0.5}{1}{0.1};
\cbull{5.5}{5}{0.1};
\ebull{5.5}{4}{0.1};
\ebull{5.5}{3}{0.1};
\cbull{5.5}{2}{0.1};
\cbull{5.5}{1}{0.1};
\draw[red,thick] (0.5,5)--(1,5.5);
\draw[red,thick](0.5,4)--(1.5,4)--(2,4.5)--(2,5.5);
\draw[red,thick](3,0.5)--(3,4.5)--(4,5.5);
\draw[red,thick](5,0.5)--(5,5.5);
\draw[cyan,thick] (0.5,1)--(5.5,1);
\draw[cyan,thick] (0.5,2)--(5.5,2);
\draw[cyan,thick] (0.5,3)--(3.5,3)--(4,3.5)--(4,4.5)--(4.5,5)--(5.5,5);
\end{tikzpicture}\\
\end{tabular}
\end{center}   
\end{ex}

 \begin{thm}
 \label{theorem:puzzlesforwavefunctions}
For positive integers $P$, $N$ and $n$, consider binary strings $m,k\in \{ \begin{tikzpicture}
    \ebull{0}{0}{0.1}
\end{tikzpicture},
\begin{tikzpicture}
    \bbull{0}{0}{0.1}
\end{tikzpicture}\}^P$ and $w,l\in \{ \begin{tikzpicture}
    \ebull{0}{0}{0.1}
\end{tikzpicture},
\begin{tikzpicture}
    \bbull{0}{0}{0.1}
\end{tikzpicture}\}^N$ where $l$ has exactly $n$ more particles than $w$. Then we have: 
\begin{equation}
    \rh_{m}(x_1,\dots,x_n)\hspace{2mm} \rh_{l/w}(x_1,\dots,x_{n})=\sum_{k}C^{k,w}_{l,m}\hspace{2mm} \rh_{k}(x_1,\dots,x_n)
\end{equation}
where 
\begin{equation}    
\label{puzzledef:maintheorem}
C^{k,w}_{l,m}= \text{  }
 \begin{tikzpicture}[scale=0.6,baseline=(current bounding box.center)]
\draw (0:0)--++(0:5)--++(90:5)--++(180:5)--++(-90:5);
\foreach \x in {1,2,3,4}{
\draw[gray] (0:\x)--++(90:5);
\draw[gray] (90:\x)--++(0:5);
};
\foreach \x/\y in {5/1,4/2,3/3}{
\path (0:0)++(0:\x-0.5)++(-90:0.5) node {$\ss \omega_{\y}$};
\path (0:0)++(0:\x-0.5)++(90:5.5) node {$\ss \lambda_{\y}$};
\path (0:0)++(180:0.5)++(90:\x-0.5) node {$\ss \mu_{\y}$};
\path (0:0)++(0:5.5)++(90:\x-0.5) node {$\ss \kappa_{\y}$};
\path (0:0)++(180:0.5)++(90:1.5) node[rotate=90] {$\ss \dots$};
\path (0:0)++(0:5.5)++(90:1.5) node[rotate=90] {$\ss \dots$};
\path (0:0)++(0:1.5)++(-90:0.5) node {$\ss \dots$};
\path (0:0)++(0:1.5)++(90:5.5) node {$\ss \dots$};
};
\path (0:0)++(180:0.5)++(90:0.5) node {$\ss \mu_{P}$};
\path (0:0)++(0:5.5)++(90:0.5) node {$\ss \kappa_{P}$};
\path (0:0)++(0:0.5)++(-90:0.5) node {$\ss \omega_{N}$};
\path (0:0)++(0:0.5)++(90:5.5) node {$\ss \lambda_{N}$};
\end{tikzpicture}
\end{equation}
\begin{align*}
\lambda_{i}=\begin{cases}
    \begin{tikzpicture}
    \rbull{0}{0}{0.1}
\end{tikzpicture} & \text{ if } l_{i}=\begin{tikzpicture}
    \bbull{0}{0}{0.1}
\end{tikzpicture} \\
\begin{tikzpicture}
    \ebull{0}{0}{0.1}
\end{tikzpicture}& \text{ if } l_{i}=\begin{tikzpicture}
    \ebull{0}{0}{0.1}
\end{tikzpicture}
\end{cases}
\qquad
\omega_{i}=\begin{cases}
    \begin{tikzpicture}
    \rbull{0}{0}{0.1}
\end{tikzpicture} & \text{ if } w_{i}=\begin{tikzpicture}
    \bbull{0}{0}{0.1}
\end{tikzpicture}\\
\begin{tikzpicture}
    \ebull{0}{0}{0.1}
\end{tikzpicture}& \text{ if } w_{i}=\begin{tikzpicture}
    \ebull{0}{0}{0.1}
\end{tikzpicture}
\end{cases}
\qquad
\mu_{i}=\begin{cases}
    \begin{tikzpicture}
    \rbull{0}{0}{0.1}
\end{tikzpicture} & \text{ if } m_{i}=\begin{tikzpicture}
    \bbull{0}{0}{0.1}
\end{tikzpicture}\\
\begin{tikzpicture}
    \sbull{0}{0}{0.1}
\end{tikzpicture}& \text{ if } m_{i}=\begin{tikzpicture}
    \ebull{0}{0}{0.1}
\end{tikzpicture}
\end{cases}
\qquad
\kappa_{i}=\begin{cases}
    \begin{tikzpicture}
    \ebull{0}{0}{0.1}
\end{tikzpicture} & \text{ if } k_{i}=\begin{tikzpicture}
    \bbull{0}{0}{0.1}
\end{tikzpicture}\\
\begin{tikzpicture}
    \sbull{0}{0}{0.1}
\end{tikzpicture}& \text{ if  } k_{i}=\begin{tikzpicture}
    \ebull{0}{0}{0.1}
\end{tikzpicture}
\end{cases}
\end{align*}
Furthermore, $C^{k,w}_{l,m}$ are polynomials in $q$ with positive coefficients, up to an overall multiplicative sign.
\end{thm}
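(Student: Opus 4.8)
The plan is to prove the identity by exhibiting a single partition function that evaluates, in two different ways, to each side of the equation; the engine is the Yang--Baxter equation~\eqref{eq:yangbaxter} used as a ``train argument'', exactly in the spirit of the Cauchy-identity proofs alluded to in the introduction and schematically depicted in~\eqref{eq:intro_yangbaxterlatticemodel}.

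First I would read the right-hand side as a partition function. The factor $\rh_k(x_1,\dots,x_n)$ is, by Definition~\ref{def:6v-wavefunction}, the partition function of the six-vertex lattice on the $n$ spectral columns $x_1,\dots,x_n$ with left boundary $k$; the factor $C^{k,w}_{l,m}$ is by definition the partition function of the puzzle region with boundaries $\lambda,\omega,\mu,\kappa$ read off from $l,w,m,k$. Gluing the puzzle region to the $\rh_k$ lattice along their common boundary --- the right boundary $\kappa$ of the puzzle matching, under the stated colour/particle dictionary, the left boundary $k$ of the $\rh_k$ lattice --- and summing over the intermediate string $k$ realises $\sum_k C^{k,w}_{l,m}\,\rh_k$ as the partition function $Z$ of one large two-coloured lattice $\mathcal L$, in which red paths carry the $m$/$l$/$w$ data and blue (cyan) paths carry the complementary $m$-to-$k$ data.

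Next I would evaluate the same $Z$ to obtain the left-hand side. The three families of lines in $\mathcal L$ --- the vertical spectral lines $x_1,\dots,x_n$, the red frame encoding $m,l,w$, and the blue frame encoding the holes of $m$ --- are all lines of one integrable model, so by Theorem~\ref{theorem:weightsoftheYBE} their pairwise crossings are governed by $R$-matrices satisfying~\eqref{graph-master}. Applying the Yang--Baxter move repeatedly, I would push the spectral lines through the red--blue interaction region (the puzzle); the move dissolves the crossings of the red and blue frames and transports the spectral lines to the opposite side. On that side the red frame and the blue frame no longer interact with one another and each interacts only with the spectral lines, so $Z$ factorises: the red part reproduces the lattice of $\rh_m(x_1,\dots,x_n)$ and the blue part reproduces that of $\rh_{l/w}(x_1,\dots,x_n)$. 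The ``trivial'' regions $B,D$ in~\eqref{eq:intro_yangbaxterlatticemodel} are precisely the degenerate $R$-matrices that make these two endpoints of the train contribute a decoupled product rather than a sum, and this yields $Z=\rh_m\,\rh_{l/w}$, hence the claimed identity.

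The step I expect to be the main obstacle is making the train argument literally correct at the level of Boltzmann weights: one must (i) pin down the exact rapidities and particle--hole conventions so that the red and blue parts of the decoupled lattice reproduce $\rh_m$ and $\rh_{l/w}$ on the nose (this is where the substitution $y_i=q^{-1}$ and the flip in the Remark after Definition~\ref{def:6v-wavefunction} enter), and (ii) verify that the Boltzmann weights of the red--blue interaction region, read off after the Yang--Baxter move, coincide with the tile weights in~\eqref{weights:wavefunctions-puzzleweights}, namely $0,1,1,1,1$ for the monochromatic tiles and $0,1,q,-1,-q$ for the mixed tiles. Item (ii) is a direct but delicate specialisation of the general weights~\eqref{generalweights}, and the appearance of the weight $0$ on the doubly occupied tiles is what forces the boundary combinatorics to match the puzzle rules. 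For the ``furthermore'' clause, each puzzle contributes the monomial obtained by multiplying its tile weights, so every admissible puzzle contributes $\pm q^{a}$ with $a\ge 0$, and the positivity-up-to-sign assertion reduces to checking that the parity of the number of sign-carrying mixed tiles is the same for all admissible puzzles with the given fixed boundary; I would establish this by a parity invariant, showing any two such puzzles differ by local moves preserving the parity, so the overall sign depends only on $(l,w,m,k)$ and factors out.
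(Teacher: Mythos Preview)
Your overall architecture is exactly the paper's: a two-coloured rank-$2$ lattice whose partition function is computed before and after a Yang--Baxter sweep, with regions $A,C$ producing the two factors $\rh_m,\rh_{l/w}$ and region $E$ producing the puzzle. But there is one genuine gap in your plan. You declare regions $B,D$ ``trivial'' as ``degenerate $R$-matrices''. Region $B$ really is frozen by a row-by-row argument, but region $D$ is \emph{not}: with generic spectral data there are many configurations in which blue particles from $D$ leak into $E$, and nothing about the $R$-matrix being at a special point kills these. The paper handles this by sending the height $P\to\infty$ under explicit convergence constraints on the rapidities (\S\ref{sub:spin1regD}); any configuration with a blue particle entering $E$ from $D$ then carries at least $P-N$ factors each of modulus $<1$, so it vanishes in the limit. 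Without this infinite-height argument (or an equivalent mechanism), the factorisation $Z=\rh_m\cdot\rh_{l/w}$ fails.

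Two smaller points. First, the puzzle weights in~\eqref{weights:wavefunctions-puzzleweights} do not drop out of the $R$-matrix directly: after the specialisations $y_i=z_i=q^{-1}$ and $z_0=0$ (the latter freezes an extra column), the raw interaction weights differ from the stated ones, and an additional monomial $(-1)^n q^{\sum_i(i-1)(l_i-w_i)}$ coming from region $C$ must be absorbed into the tiles via a gauge argument counting right-edge red occupations (Lemma~\ref{lemma:redvertices_in_puzzles}). You flag weight-matching as an obstacle, but this specific factor is the nontrivial content. Second, for positivity the paper does not use local moves between puzzles; it argues directly that, since the all-equal tiles have weight $0$, paths of the same colour never cross, so for any fixed pair (blue from the left, hole from below) the parity of their interaction is determined by the boundary alone --- hence every admissible puzzle carries the same global sign. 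Your local-move idea could work too, but the direct parity argument is shorter and avoids classifying local moves.
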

Below, we provide two examples to illustrate the theorem mentioned above. 
\begin{ex}
Fix $P=5$, $N=3$ and $n=1$. The product $\rh_{(0,1,0,0,0)}\rh_{(1,0,1))/(0,1,0)}$ has the following expansion:
\begin{align*}
    \rh_{(0,1,0,0,0)}& \rh_{(1,0,1) / (0,1,0)}=\rh_{ (0,1,0,0,0)} -(1+q+q)\rh_{ (0,0,1,0,0)} +(q+q+q^{2})\rh_{ (0,0,0,1,0) }- q^{2} \rh_{ (0,0,0,0,1) }\\[1 em]
&=
\begin{tikzpicture}[scale=0.5,baseline=(current bounding box.center)]
\draw[gray](0.5,0.5) rectangle (3.5,5.5);  
\foreach \x in {1,2,3,4}{
\draw[gray] (0.5,\x+0.5)--(3.5,\x+0.5);
};
\foreach \x in {1,2}{
\draw[gray] (\x+0.5,0.5)--(\x+0.5,5.5);
};
\ebull{1}{0.5}{0.1};
\rbull{2}{0.5}{0.1};
\ebull{3}{0.5}{0.1};
\rbull{1}{5.5}{0.1};
\ebull{2}{5.5}{0.1};
\rbull{3}{5.5}{0.1};
\cbull{0.5}{5}{0.1};
\rbull{0.5}{4}{0.1};
\cbull{0.5}{3}{0.1};
\cbull{0.5}{2}{0.1};
\cbull{0.5}{1}{0.1};
\cbull{3.5}{5}{0.1};
\ebull{3.5}{4}{0.1};
\cbull{3.5}{3}{0.1};
\cbull{3.5}{2}{0.1};
\cbull{3.5}{1}{0.1};
\draw[red,thick](0.5,4)--(1,4.5)--(1,5.5);
\draw[red,thick](2,0.5)--(2,3.5)--(3,4.5)--(3,5.5);
\draw[cyan,thick] (0.5,1)--(3.5,1);
\draw[cyan,thick] (0.5,2)--(3.5,2);
\draw[cyan,thick] (0.5,3)--(3.5,3);
\draw[cyan,thick] (0.5,5)--(3.5,5);
\end{tikzpicture} \hspace{2mm}\rh_{ (0,1,0,0,0)}
+\left(
\begin{tikzpicture}[scale=0.5,baseline=(current bounding box.center)]
\draw[gray](0.5,0.5) rectangle (3.5,5.5);  
\foreach \x in {1,2,3,4}{
\draw[gray] (0.5,\x+0.5)--(3.5,\x+0.5);
};
\foreach \x in {1,2}{
\draw[gray] (\x+0.5,0.5)--(\x+0.5,5.5);
};
\ebull{1}{0.5}{0.1};
\rbull{2}{0.5}{0.1};
\ebull{3}{0.5}{0.1};
\rbull{1}{5.5}{0.1};
\ebull{2}{5.5}{0.1};
\rbull{3}{5.5}{0.1};
\cbull{0.5}{5}{0.1};
\rbull{0.5}{4}{0.1};
\cbull{0.5}{3}{0.1};
\cbull{0.5}{2}{0.1};
\cbull{0.5}{1}{0.1};
\cbull{3.5}{5}{0.1};
\cbull{3.5}{4}{0.1};
\ebull{3.5}{3}{0.1};
\cbull{3.5}{2}{0.1};
\cbull{3.5}{1}{0.1};
\draw[red,thick](0.5,4)--(1,4.5)--(1,5.5);
\draw[red,thick](2,0.5)--(2,3.5)--(3,4.5)--(3,5.5);
\draw[cyan,thick] (0.5,1)--(3.5,1);
\draw[cyan,thick] (0.5,2)--(3.5,2);
\draw[cyan,thick] (0.5,3)--(2.5,3)--(3.5,4);
\draw[cyan,thick] (0.5,5)--(3.5,5);
\end{tikzpicture}
+
\begin{tikzpicture}[scale=0.5,baseline=(current bounding box.center)]
\draw[gray](0.5,0.5) rectangle (3.5,5.5);  
\foreach \x in {1,2,3,4}{
\draw[gray] (0.5,\x+0.5)--(3.5,\x+0.5);
};
\foreach \x in {1,2}{
\draw[gray] (\x+0.5,0.5)--(\x+0.5,5.5);
};
\ebull{1}{0.5}{0.1};
\rbull{2}{0.5}{0.1};
\ebull{3}{0.5}{0.1};
\rbull{1}{5.5}{0.1};
\ebull{2}{5.5}{0.1};
\rbull{3}{5.5}{0.1};
\cbull{0.5}{5}{0.1};
\rbull{0.5}{4}{0.1};
\cbull{0.5}{3}{0.1};
\cbull{0.5}{2}{0.1};
\cbull{0.5}{1}{0.1};
\cbull{3.5}{5}{0.1};
\cbull{3.5}{4}{0.1};
\ebull{3.5}{3}{0.1};
\cbull{3.5}{2}{0.1};
\cbull{3.5}{1}{0.1};
\draw[red,thick](0.5,4)--(1,4.5)--(1,5.5);
\draw[red,thick](2,0.5)--(2,2.5)--(3,3.5)--(3,5.5);
\draw[cyan,thick] (0.5,1)--(3.5,1);
\draw[cyan,thick] (0.5,2)--(3.5,2);
\draw[cyan,thick] (0.5,3)--(1.5,4)--(3.5,4);
\draw[cyan,thick] (0.5,5)--(3.5,5);
\end{tikzpicture}
+
\begin{tikzpicture}[scale=0.5,baseline=(current bounding box.center)]
\draw[gray](0.5,0.5) rectangle (3.5,5.5);  
\foreach \x in {1,2,3,4}{
\draw[gray] (0.5,\x+0.5)--(3.5,\x+0.5);
};
\foreach \x in {1,2}{
\draw[gray] (\x+0.5,0.5)--(\x+0.5,5.5);
};
\ebull{1}{0.5}{0.1};
\rbull{2}{0.5}{0.1};
\ebull{3}{0.5}{0.1};
\rbull{1}{5.5}{0.1};
\ebull{2}{5.5}{0.1};
\rbull{3}{5.5}{0.1};
\cbull{0.5}{5}{0.1};
\rbull{0.5}{4}{0.1};
\cbull{0.5}{3}{0.1};
\cbull{0.5}{2}{0.1};
\cbull{0.5}{1}{0.1};
\cbull{3.5}{5}{0.1};
\cbull{3.5}{4}{0.1};
\ebull{3.5}{3}{0.1};
\cbull{3.5}{2}{0.1};
\cbull{3.5}{1}{0.1};
\draw[red,thick](0.5,4)--(1,4.5)--(1,5.5);
\draw[red,thick](2,0.5)--(2,2.5)--(3,3.5)--(3,5.5);
\draw[cyan,thick] (0.5,1)--(3.5,1);
\draw[cyan,thick] (0.5,2)--(3.5,2);
\draw[cyan,thick] (0.5,3)--(1.5,3)--(2.5,4)--(3.5,4);
\draw[cyan,thick] (0.5,5)--(3.5,5);
\end{tikzpicture}
\right) \hspace{2mm}\rh_{ (0,0,1,0,0)} \\[1 em]
&+\left(\begin{tikzpicture}[scale=0.5,baseline=(current bounding box.center)]
\draw[gray](0.5,0.5) rectangle (3.5,5.5);  
\foreach \x in {1,2,3,4}{
\draw[gray] (0.5,\x+0.5)--(3.5,\x+0.5);
};
\foreach \x in {1,2}{
\draw[gray] (\x+0.5,0.5)--(\x+0.5,5.5);
};
\ebull{1}{0.5}{0.1};
\rbull{2}{0.5}{0.1};
\ebull{3}{0.5}{0.1};
\rbull{1}{5.5}{0.1};
\ebull{2}{5.5}{0.1};
\rbull{3}{5.5}{0.1};
\cbull{0.5}{5}{0.1};
\rbull{0.5}{4}{0.1};
\cbull{0.5}{3}{0.1};
\cbull{0.5}{2}{0.1};
\cbull{0.5}{1}{0.1};
\cbull{3.5}{5}{0.1};
\cbull{3.5}{4}{0.1};
\cbull{3.5}{3}{0.1};
\ebull{3.5}{2}{0.1};
\cbull{3.5}{1}{0.1};
\draw[red,thick](0.5,4)--(1,4.5)--(1,5.5);
\draw[red,thick](2,0.5)--(2,2.5)--(3,3.5)--(3,5.5);
\draw[cyan,thick] (0.5,1)--(3.5,1);
\draw[cyan,thick] (0.5,2)--(2.5,2)--(3.5,3);
\draw[cyan,thick] (0.5,3)--(1.5,3)--(2.5,4)--(3.5,4);
\draw[cyan,thick] (0.5,5)--(3.5,5);
\end{tikzpicture}
+
\begin{tikzpicture}[scale=0.5,baseline=(current bounding box.center)]
\draw[gray](0.5,0.5) rectangle (3.5,5.5);  
\foreach \x in {1,2,3,4}{
\draw[gray] (0.5,\x+0.5)--(3.5,\x+0.5);
};
\foreach \x in {1,2}{
\draw[gray] (\x+0.5,0.5)--(\x+0.5,5.5);
};
\ebull{1}{0.5}{0.1};
\rbull{2}{0.5}{0.1};
\ebull{3}{0.5}{0.1};
\rbull{1}{5.5}{0.1};
\ebull{2}{5.5}{0.1};
\rbull{3}{5.5}{0.1};
\cbull{0.5}{5}{0.1};
\rbull{0.5}{4}{0.1};
\cbull{0.5}{3}{0.1};
\cbull{0.5}{2}{0.1};
\cbull{0.5}{1}{0.1};
\cbull{3.5}{5}{0.1};
\cbull{3.5}{4}{0.1};
\cbull{3.5}{3}{0.1};
\ebull{3.5}{2}{0.1};
\cbull{3.5}{1}{0.1};
\draw[red,thick](0.5,4)--(1,4.5)--(1,5.5);
\draw[red,thick](2,0.5)--(2,2.5)--(3,3.5)--(3,5.5);
\draw[cyan,thick] (0.5,1)--(3.5,1);
\draw[cyan,thick] (0.5,2)--(2.5,2)--(3.5,3);
\draw[cyan,thick] (0.5,3)--(1.5,4)--(3.5,4);
\draw[cyan,thick] (0.5,5)--(3.5,5);
\end{tikzpicture}
+
\begin{tikzpicture}[scale=0.5,baseline=(current bounding box.center)]
\draw[gray](0.5,0.5) rectangle (3.5,5.5);  
\foreach \x in {1,2,3,4}{
\draw[gray] (0.5,\x+0.5)--(3.5,\x+0.5);
};
\foreach \x in {1,2}{
\draw[gray] (\x+0.5,0.5)--(\x+0.5,5.5);
};
\ebull{1}{0.5}{0.1};
\rbull{2}{0.5}{0.1};
\ebull{3}{0.5}{0.1};
\rbull{1}{5.5}{0.1};
\ebull{2}{5.5}{0.1};
\rbull{3}{5.5}{0.1};
\cbull{0.5}{5}{0.1};
\rbull{0.5}{4}{0.1};
\cbull{0.5}{3}{0.1};
\cbull{0.5}{2}{0.1};
\cbull{0.5}{1}{0.1};
\cbull{3.5}{5}{0.1};
\cbull{3.5}{4}{0.1};
\cbull{3.5}{3}{0.1};
\ebull{3.5}{2}{0.1};
\cbull{3.5}{1}{0.1};
\draw[red,thick](0.5,4)--(1,4.5)--(1,5.5);
\draw[red,thick](2,0.5)--(2,1.5)--(3,2.5)--(3,5.5);
\draw[cyan,thick] (0.5,1)--(3.5,1);
\draw[cyan,thick] (0.5,2)--(1.5,2)--(2.5,3)--(3.5,3);
\draw[cyan,thick] (0.5,3)--(1.5,4)--(3.5,4);
\draw[cyan,thick] (0.5,5)--(3.5,5);
\end{tikzpicture}\right)\rh_{(0,0,0,1,0)}+
\begin{tikzpicture}[scale=0.5,baseline=(current bounding box.center)]
\draw[gray](0.5,0.5) rectangle (3.5,5.5);  
\foreach \x in {1,2,3,4}{
\draw[gray] (0.5,\x+0.5)--(3.5,\x+0.5);
};
\foreach \x in {1,2}{
\draw[gray] (\x+0.5,0.5)--(\x+0.5,5.5);
};
\ebull{1}{0.5}{0.1};
\rbull{2}{0.5}{0.1};
\ebull{3}{0.5}{0.1};
\rbull{1}{5.5}{0.1};
\ebull{2}{5.5}{0.1};
\rbull{3}{5.5}{0.1};
\cbull{0.5}{5}{0.1};
\rbull{0.5}{4}{0.1};
\cbull{0.5}{3}{0.1};
\cbull{0.5}{2}{0.1};
\cbull{0.5}{1}{0.1};
\cbull{3.5}{5}{0.1};
\cbull{3.5}{4}{0.1};
\cbull{3.5}{3}{0.1};
\cbull{3.5}{2}{0.1};
\ebull{3.5}{1}{0.1};
\draw[red,thick](0.5,4)--(1,4.5)--(1,5.5);
\draw[red,thick](2,0.5)--(2,1.5)--(3,2.5)--(3,5.5);
\draw[cyan,thick] (0.5,1)--(2.5,1)--(3.5,2);
\draw[cyan,thick] (0.5,2)--(1.5,2)--(2.5,3)--(3.5,3);
\draw[cyan,thick] (0.5,3)--(1.5,4)--(3.5,4);
\draw[cyan,thick] (0.5,5)--(3.5,5);
\end{tikzpicture}\hspace{2mm}\rh_{(0,0,0,0,1)}
\end{align*}
\end{ex} 

\begin{ex} Fix $P,N=4$ and $n=2$, we compute
\begin{align*}
    \rh_{(1,1,0,0)} \rh_{(1,1,0,0)}&=\rh_{(1,1,0,0)}-q\rh_{(1,0,1,0)}-q^{2}\rh_{(0,1,0,1)}+q^{3}\rh_{(0,0,1,1)}\\[1 em]
&=
\begin{tikzpicture}[scale=0.5,baseline=(current bounding box.center)]
\draw[gray] (0.5,0.5) rectangle (4.5,4.5);
\foreach\x in {1,2,3}{
\draw[gray] (\x+0.5,0.5)--(\x+0.5,4.5);
\draw[gray] (0.5,\x+0.5)--(4.5,\x+0.5);
};
\foreach \x  in {1,2,3,4}{
\ebull{\x}{0.5}{0.1};
};
\rbull{0.5}{4}{0.1};
\rbull{0.5}{3}{0.1};
\cbull{0.5}{2}{0.1};
\cbull{0.5}{1}{0.1};
\ebull{1}{4.5}{0.1};
\ebull{2}{4.5}{0.1};
\rbull{3}{4.5}{0.1};
\rbull{4}{4.5}{0.1};
\ebull{4.5}{4}{0.1};
\ebull{4.5}{3}{0.1};
\cbull{4.5}{2}{0.1};
\cbull{4.5}{1}{0.1};
\draw[cyan,thick] (0.5,1)--(4.5,1);
\draw[cyan,thick] (0.5,2)--(4.5,2);
\draw[red,thick] (0.5,3)--(3.5,3)--(4,3.5)--(4,4.5);
\draw[red,thick] (0.5,4)--(2.5,4)--(3,4.5);
\end{tikzpicture} \hspace{2mm}\rh_{(0,0,1,1)}
+
\begin{tikzpicture}[scale=0.5,baseline=(current bounding box.center)]
\draw[gray] (0.5,0.5) rectangle (4.5,4.5);
\foreach\x in {1,2,3}{
\draw[gray] (\x+0.5,0.5)--(\x+0.5,4.5);
\draw[gray] (0.5,\x+0.5)--(4.5,\x+0.5);
};
\foreach \x  in {1,2,3,4}{
\ebull{\x}{0.5}{0.1};
};
\rbull{0.5}{4}{0.1};
\rbull{0.5}{3}{0.1};
\cbull{0.5}{2}{0.1};
\cbull{0.5}{1}{0.1};
\ebull{1}{4.5}{0.1};
\ebull{2}{4.5}{0.1};
\rbull{3}{4.5}{0.1};
\rbull{4}{4.5}{0.1};
\ebull{4.5}{4}{0.1};
\cbull{4.5}{3}{0.1};
\ebull{4.5}{2}{0.1};
\cbull{4.5}{1}{0.1};
\draw[cyan,thick] (0.5,1)--(4.5,1);
\draw[cyan,thick] (0.5,2)--(3.5,2)--(4.5,3);
\draw[red,thick] (0.5,3)--(3.5,3)--(4,3.5)--(4,4.5);
\draw[red,thick] (0.5,4)--(2.5,4)--(3,4.5);
\end{tikzpicture}\hspace{2mm}\rh_{(0,1,0,1)}\\[1em]
&+
\begin{tikzpicture}[scale=0.5,baseline=(current bounding box.center)]
\draw[gray] (0.5,0.5) rectangle (4.5,4.5);
\foreach\x in {1,2,3}{
\draw[gray] (\x+0.5,0.5)--(\x+0.5,4.5);
\draw[gray] (0.5,\x+0.5)--(4.5,\x+0.5);
};
\foreach \x  in {1,2,3,4}{
\ebull{\x}{0.5}{0.1};
};
\rbull{0.5}{4}{0.1};
\rbull{0.5}{3}{0.1};
\cbull{0.5}{2}{0.1};
\cbull{0.5}{1}{0.1};
\ebull{1}{4.5}{0.1};
\ebull{2}{4.5}{0.1};
\rbull{3}{4.5}{0.1};
\rbull{4}{4.5}{0.1};
\cbull{4.5}{4}{0.1};
\ebull{4.5}{3}{0.1};
\cbull{4.5}{2}{0.1};
\ebull{4.5}{1}{0.1};
\draw[cyan,thick] (0.5,1)--(3.5,1)--(4.5,2);
\draw[cyan,thick] (0.5,2)--(2.5,2)--(3,2.5)--(3,3.5)--(3.5,4)--(4.5,4);
\draw[red,thick] (0.5,3)--(3.5,3)--(4,3.5)--(4,4.5);
\draw[red,thick] (0.5,4)--(2.5,4)--(3,4.5);
\end{tikzpicture}\hspace{2mm}\rh_{(1,0,1,0)}
+
\begin{tikzpicture}[scale=0.5,baseline=(current bounding box.center)]
\draw[gray] (0.5,0.5) rectangle (4.5,4.5);
\foreach\x in {1,2,3}{
\draw[gray] (\x+0.5,0.5)--(\x+0.5,4.5);
\draw[gray] (0.5,\x+0.5)--(4.5,\x+0.5);
};
\foreach \x  in {1,2,3,4}{
\ebull{\x}{0.5}{0.1};
};
\rbull{0.5}{4}{0.1};
\rbull{0.5}{3}{0.1};
\cbull{0.5}{2}{0.1};
\cbull{0.5}{1}{0.1};
\ebull{1}{4.5}{0.1};
\ebull{2}{4.5}{0.1};
\rbull{3}{4.5}{0.1};
\rbull{4}{4.5}{0.1};
\cbull{4.5}{4}{0.1};
\cbull{4.5}{3}{0.1};
\ebull{4.5}{2}{0.1};
\ebull{4.5}{1}{0.1};
\draw[cyan,thick] (0.5,1)--(3.5,1)--(4,1.5)--(4,2.5)--(4.5,3);
\draw[cyan,thick] (0.5,2)--(2.5,2)--(3,2.5)--(3,3.5)--(3.5,4)--(4.5,4);
\draw[red,thick] (0.5,3)--(3.5,3)--(4,3.5)--(4,4.5);
\draw[red,thick] (0.5,4)--(2.5,4)--(3,4.5);
\end{tikzpicture} \hspace{2mm}\rh_{(1,1,0,0)}
\end{align*}
\end{ex}
 
\subsection{Higher spin puzzles}
To give a formula for the structure constants for spin Hall--Littlewood functions, we need to consider a slightly modified version of the puzzles presented in the previous subsection. Let us begin with the description of the tiles necessary to construct these puzzles. Every edge of a tile is decorated with an ordered pair of non-negative integers.
  
\begin{equation}
\begin{tikzpicture}[scale=1,baseline=-2pt]
 \draw[gray] (0,0) rectangle (1,1); 
\draw[red,->,rounded corners] (0,0.7)--(0.3,0.7)--(0.3,1);
\draw[red,->,rounded corners] (0,0.6)--(0.4,0.6)--(0.4,1);
\draw[red,->,rounded corners] (0.5,0)--(0.5,1);
\draw[red,->,rounded corners] (0.6,0)--(0.6,0.4)--(1,0.4);
\draw[cyan,<-] (0,0.5)--(1,0.5);
\draw[cyan,<-,rounded corners] (0,0.4)--(0.4,0.4)--(0.4,0);
\draw[cyan,<-,rounded corners] (0.6,1)--(0.6,0.6)--(1,0.6);
\node at (0.5,-0.3) {$\ss (a_1,a_2)$};
\node at (-0.7,0.5) {$\ss (b_1,b_2)$};
\node at (0.5,1.3) {$\ss (c_1,c_2)$};
\node at (1.7,0.5) {$\ss (d_1,d_2)$};
\end{tikzpicture}
\end{equation}

The weight of these tiles is zero whenever $a_1+b_1 \neq c_1+d_1$ and/or $a_2+d_2 \neq b_2+c_2$. In other words, the first particles (red particles) enter from the left and bottom, exiting through the top and the right edge, while the second particles (blue particles) enter from the bottom and the right, exiting through the left and the top edge. Non-zero weights for these tiles are specified within the theorem.

\begin{dfn}
 For three vectors of non negative integers $\bll=(l_1,\dots,l_{N})$, $\bm=(m_1,\dots,m_{P})$, and $\bk=(k_1,\dots,k_{P})$ such that $\sum_{i}l_{i}=\sum_{i}m_{i}=\sum_{i}k_{i}=n$, the \emph{higher spin puzzles} are a tiling of a rectangular region with the above mentioned tiles with the boundary conditions shown below: 
\begin{align}
\label{puzzles:latticemodel-general}
 \begin{tikzpicture}[scale=0.6,baseline=(current bounding box.center)]
 \draw (0,0)--++(0:5)--++(90:6)--++(180:5)--++(-90:6);
  \foreach\x in {1,2,3,4,5}{
 \draw[gray] (0:\x)--++(90:6);
  \draw[gray] (90:\x)--++(0:5);
 };
 \path (0:0)++(180:1)++(90:0.5) node {$\ss \textcolor{cyan} {m_{P}} + \textcolor{red} {m_{P}}$};
 \path (0:0)++(180:0.7)++(90:2) node[rotate=90] {$\dots$};
 \path (0:0)++(180:1)++(90:3.5) node {$\ss \textcolor{cyan} {m_{3}} + \textcolor{red} {m_{3}}$};
 \path (0:0)++(180:1)++(90:4.5) node {$\ss \textcolor{cyan} {m_{2}} + \textcolor{red} {m_{2}}$};
\path (0:0)++(180:1)++(90:5.5) node {$\ss \textcolor{cyan} {m_{1}} + \textcolor{red} {m_{1}}$};
 \path (0:0)++(0:5.5)++(90:0.5) node[cyan] {$\ss k_{P}$};
 \path (0:0)++(0:5.5)++(90:2) node[rotate=90] {$\dots$};
 \path (0:0)++(0:5.5)++(90:3.5) node[cyan] {$\ss k_{3}$};
 \path (0:0)++(0:5.5)++(90:4.5) node[cyan] {$\ss k_{2}$};
  \path (0:0)++(0:5.5)++(90:5.5) node[cyan] {$\ss k_1$};
 \draw[fill=white] (0:0)++(0:0.5) circle (0.1);
\draw[fill=white] (0:0)++(0:1.5) circle (0.1);
\draw[fill=white] (0:0)++(0:2.5) circle (0.1);
\draw[fill=white] (0:0)++(0:3.5) circle (0.1);
\draw[fill=white] (0:0)++(0:4.5) circle (0.1);
  \path (0:0)++(90:6.5)++(0:0.6) node[red] {$\ss l_{N}$};
  \path (0:0)++(90:6.5)++(0:2) node[red] {$\dots$};
  \path (0:0)++(90:6.5)++(0:3.6) node[red] {$\ss l_2$};
  \path (0:0)++(90:6.5)++(0:4.6) node[red] {$\ss l_1$};
 \end{tikzpicture}
\end{align}
 where for a non-negative integer $k$, we write  $\textcolor{red}{k}$ and  $\textcolor{cyan}{k}$ to denote $(k,0)$ and $(0,k)$.
\end{dfn}

Graphically, we can interpret the boundary conditions of the puzzle as follows: the blue particles enter from the right and exit through the left boundary while moving upwards and to the left, while the red particles enter from the left and exit through the top while moving upwards and to the right. 

\begin{thm}
\label{theorem:Puzzles-Spin-Hall} For a vector $\bll=(l_1,\dots,l_{N})$ of non negative integers we write $|\bll|=\sum_{i}il_{i}$. For three vectors of non negative integers $\bll=(l_1,\dots,l_{N})$, $\bm=(m_1,\dots,m_{P})$, and $\bk=(k_1,\dots,k_{P})$ such that $\sum_{i}l_{i}=\sum_{i}m_{i}=\sum_{i}k_{i}=n$,
we have the following product rule for spin Hall--Littlewood functions:
\begin{equation}
\label{eq:spin-theorem}
\mathrm{F}_{\bm}(x_1,\dots,x_n;s) \hspace{3mm}\mathrm{F}_{\mathbf{l}}(x_1,\dots,x_{n};s)=\sum_{\bk}
\mathcal{C}^{\bk}_{\bll,\bm}(q,s)\hspace{2mm}
\mathrm{F}_{\mathbf{k}}(x_1,\dots,x_{n};s)  
\end{equation}
where 
\begin{align}
\label{puzzles:spin-theorem}
\mathcal{C}^{\bk}_{\bll,\bm}(q,s) =s^{|\bll|+|\bm|-|\bk|}(-1)^{|\bm|-|\bk|}
 \left(
 \begin{tikzpicture}[scale=0.6,baseline=(current bounding box.center)]
 \draw (0,0)--++(0:5)--++(90:6)--++(180:5)--++(-90:6);
  \foreach\x in {1,2,3,4,5}{
 \draw[gray] (0:\x)--++(90:6);
  \draw[gray] (90:\x)--++(0:5);
 };
 \path (0:0)++(180:1)++(90:0.5) node {$\ss \textcolor{cyan} {m_{P}} + \textcolor{red} {m_{P}}$};
 \path (0:0)++(180:0.7)++(90:2) node[rotate=90] {$\dots$};
 \path (0:0)++(180:1)++(90:3.5) node {$\ss \textcolor{cyan} {m_{3}} + \textcolor{red} {m_{3}}$};
 \path (0:0)++(180:1)++(90:4.5) node {$\ss \textcolor{cyan} {m_{2}} + \textcolor{red} {m_{2}}$};
\path (0:0)++(180:1)++(90:5.5) node {$\ss \textcolor{cyan} {m_{1}} + \textcolor{red} {m_{1}}$};
 \path (0:0)++(0:5.5)++(90:0.5) node[cyan] {$\ss k_{P}$};
 \path (0:0)++(0:5.5)++(90:2) node[rotate=90] {$\dots$};
 \path (0:0)++(0:5.5)++(90:3.5) node[cyan] {$\ss k_{3}$};
 \path (0:0)++(0:5.5)++(90:4.5) node[cyan] {$\ss k_{2}$};
  \path (0:0)++(0:5.5)++(90:5.5) node[cyan] {$\ss k_1$};
 \draw[fill=white] (0:0)++(0:0.5) circle (0.1);
\draw[fill=white] (0:0)++(0:1.5) circle (0.1);
\draw[fill=white] (0:0)++(0:2.5) circle (0.1);
\draw[fill=white] (0:0)++(0:3.5) circle (0.1);
\draw[fill=white] (0:0)++(0:4.5) circle (0.1);
  \path (0:0)++(90:6.5)++(0:0.6) node[red] {$\ss l_{N}$};
  \path (0:0)++(90:6.5)++(0:2) node[red] {$\dots$};
  \path (0:0)++(90:6.5)++(0:3.6) node[red] {$\ss l_2$};
  \path (0:0)++(90:6.5)++(0:4.6) node[red] {$\ss l_1$};
 \end{tikzpicture}
 \right) \prod_{i\geq 1}\dfrac{(q;q)_{l_{i}}}{(q;q)_{k_{i}}}\dfrac{1}{(s^{2};q)_{l_{i}}}
\end{align}
with weights:
\begin{multline}
\label{weights:finalweights_puzzle}
\begin{tikzpicture}[scale=1,baseline=(current bounding box.center)]
 \draw (0,0) rectangle (1,1); 
\node at (0.5,-0.3) {$\ss (a_1 ,a_2)$};
\node at (-0.5,0.5) {$\ss (b_1 ,b_2)$};
\node at (0.5,1.3) {$\ss (c_1 , c_2)$};
\node at (1.5,0.5) {$\ss (d_1 , d_2)$};
\end{tikzpicture}= \mathbf{1}_{a_1+b_1=c_1+d_1}\hspace{2mm} \mathbf{1}_{a_2+d_2=b_2+c_2} \hspace{2mm}\mathbf{1}_{b_1 \leq b_2}\hspace{2mm} \mathbf{1}_{d_1 \leq d_2}\\[1em]
\sum_{p_{1},p_{2}}
(-1)^{p_{1}}\hspace{1mm}q^{(d_{1}-d_{2})(c_{2}-p_{2})+\binom{b_{2}-b_{1}+p_{1}}{2}}\hspace{1mm}\binom{\text{\small $c_1 +d_1 -p_1$}}{\text{\small{$c_1 -p_1$}}}_{q} \hspace{1mm} \binom{\text{\small $b_1$}}{\text{\small $p_1$}}_{q}\\[1em]
\displaystyle
\dfrac{ \displaystyle \prod^{p_{1}+p_{2}}_{i=1} (q-s^{\text{\tiny $2$}}q^{i-1})}{(s^{\text{\tiny $2$}};q)_{p_{1}+p_{2}+b_{2}-b_{1}}}
\hspace{1mm}
\dfrac{ \displaystyle \prod^{c_{2}-p_{2}}_{i=1} (s^{\text{\tiny $2$}}q^{d_{2}}-q^{i})}{(q;q)_{c_{2}-p_{2}}}
\hspace{1mm}
\dfrac{ \displaystyle (q;q)_{c_{1}+c_{2}-p_{1}-p_{2}}}{(q;q)_{c_{1}+c_{2}-p_{1}-p_{2}-d_{2}+d_{1}}}
\hspace{1mm}
\dfrac{ \displaystyle (s^{\text{\tiny $2$}} q^{b_{2}};q)_{p_{2}}}{(q;q)_{p_{2}}}
\end{multline}
\end{thm}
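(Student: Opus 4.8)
The plan is to run the Yang--Baxter ``railway'' argument pictured in~\eqref{eq:intro_yangbaxterlatticemodel}, which is the structure-constant analogue of the standard lattice proof of Cauchy identities. I would build a single lattice on a hexagonal domain threaded by three families of rapidity lines: the $n$ vertical lines carrying $x_1,\dots,x_n$ (capacity $1$); a family of $P$ horizontal lines carrying $s$ (generic capacity, via $q^{-\tl}=s^{2}$); and a family of $N$ horizontal lines also carrying $s$. Each of the three pairwise-crossing regions of the hexagon is then a rectangle, and the boundary data on the six sides are frozen so that: the rectangle where the $x$-family meets the $P$-family is exactly the lattice of Definition~\ref{def:spinHl} with left boundary $\bm$, hence evaluates to $\mathrm{F}_{\bm}(x_1,\dots,x_n;s)$; the rectangle where the $x$-family meets the $N$-family evaluates to $\mathrm{F}_{\bll}(x_1,\dots,x_n;s)$ (matching orientations using symmetry of $\mathrm F$ and, if needed, the relation to Borodin's functions in Proposition~\ref{prop:relationbetween the two f's}); and the rectangle where the two $s$-families meet is frozen to a single configuration contributing an explicit scalar. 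In this ``left-hand'' configuration the hexagon's partition function equals $\mathrm{F}_{\bm}(x;s)\,\mathrm{F}_{\bll}(x;s)$ up to that scalar.

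Next I would establish the equality of the two sides of~\eqref{eq:intro_yangbaxterlatticemodel} by repeatedly applying the Yang--Baxter equation~\eqref{eq:yangbaxter} --- valid in exactly this generality by Theorem~\ref{theorem:weightsoftheYBE} --- to slide the lines of one family across the intersection of the other two, carrying the hexagon to its ``right-hand'' configuration. Reading it there: the rectangle where the $x$-family meets the $P$-family now computes, summed over the state $\bk$ (a $P$-tuple) on the internal edge, $\sum_{\bk}(\cdots)\,\mathrm{F}_{\bk}(x;s)$; the rectangle where the $x$-family meets the $N$-family is frozen and contributes only a scalar; and the $P\times N$ rectangle where the $P$- and $N$-families cross now carries on each edge an ordered pair of occupation numbers --- one component recording the strands shared with the $\bll$-boundary (the ``red'' strands), the other those shared with the $\bm$- and $\bk$-boundaries (the ``blue'' strands) --- so that this rectangle is precisely the higher spin puzzle of~\eqref{puzzles:latticemodel-general}. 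Equating the two readings and cancelling the common scalars produces~\eqref{eq:spin-theorem}.

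What remains is to identify the vertex weights of the puzzle rectangle with~\eqref{weights:finalweights_puzzle} and to collect the prefactor $s^{|\bll|+|\bm|-|\bk|}(-1)^{|\bm|-|\bk|}\prod_{i\ge1}\frac{(q;q)_{l_i}}{(q;q)_{k_i}}\frac{1}{(s^{2};q)_{l_i}}$. The puzzle vertices should come out by specialising the Bosnjak--Mangazeev weights~\eqref{generalweights} at $\afl=1$ with $q^{-\tl}=s^{2}$ on the appropriate line; the internal summation over $\mathbf{P}$ built into~\eqref{generalweights} is the source of the double sum over $p_{1},p_{2}$ in~\eqref{weights:finalweights_puzzle}, and the two $\Phi$-factors expand, after $q$-Chu--Vandermonde type manipulations, into the displayed products and ratios of $q$-Pochhammer symbols and $q$-binomials; the inequalities $b_{1}\le b_{2}$ and $d_{1}\le d_{2}$ record that along each row the number of red strands never exceeds the number of blue ones, which is how the boundary freezing propagates. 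The prefactor in turn is assembled from the row-by-row mismatch between the normalisation of the weights~\eqref{weights:def-spinHl} used to define $\mathrm F$ and the weights used in the puzzle (the origin of the $(s^{2};q)_{l_i}^{-1}$ and $(q;q)$ ratios, in the same spirit as the spurious $x^{-1}$ in Proposition~\ref{prop:relationbetween the two f's}), together with the weights of the frozen boundary vertices and the scalar of the trivial region (which between them give the power of $s$ and the overall sign). I expect this weight identification, with its attendant $q$-series bookkeeping, to be the main obstacle; the lattice-model part preceding it is essentially formal. Once the weights are in the stated form, the final claim that $\mathcal{C}^{\bk}_{\bll,\bm}(q,s)$ is, up to a single overall sign, a polynomial in $q$ with non-negative coefficients should follow by tracking the sign-bearing factors $(-1)^{p_{1}}$, $(q-s^{2}q^{i-1})$ and $(s^{2}q^{d_2}-q^{i})$, each of which contributes a uniform sign across the sum, with the case $s=0$ (where one recovers the positive Hall-polynomial puzzles of Zinn-Justin) serving as a consistency check.
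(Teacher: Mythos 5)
Your proposal follows the paper's own route: the same hexagonal Yang--Baxter identity \eqref{eq:YBEhigerspin}, the same assignment of the six crossing regions (two giving $\mathrm{F}_{\bm}$ and $\mathrm{F}_{\bll}$, one trivial, and on the other side $\mathrm{F}_{\bk}$, one trivial, and the puzzle), and the same specialisations $q^{-\tl}=q^{-\tm}=s^{2}$, $y_i=z_i=s$. The genuine gap is your treatment of the region where the $x$-family meets the $N$-family on the right-hand side (region $D$ of \eqref{eq:YBEhigerspin}). You assert it ``is frozen and contributes only a scalar'', presumably by a conservation argument as for the trivial region on the left. That argument does not apply here: conservation forces the $n$ red particles entering $D$ into the puzzle region, but the blue particles entering $D$ from the $x$-boundary may turn either into the puzzle region or into the $\mathrm{F}_{\bk}$ region, so for finite $P$ there are many unfrozen configurations. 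The paper kills them only by taking $P\to\infty$ under explicit convergence hypotheses (every unwanted configuration carries at least of order $P-N$ vertex weights of modulus $<1$, so its contribution vanishes in the limit); this analytic step, together with the vanishing-tile argument that then makes the $\bk$-sum finite, is essential and is absent from your sketch. A related omission is the auxiliary $(z_0,\mathtt{M}_0)$ line carrying the reservoir $\textcolor{red}{n}$ of red particles: the specialisation $z_0=q^{-\mathtt{M}_0}=0$ (worked out in the appendix) is what freezes the first column of the puzzle with weight $(q;q)_n$ and places the red components $\textcolor{red}{m_i}$ on the puzzle's left boundary; without it neither the boundary conditions of \eqref{puzzles:latticemodel-general} nor the cancellation of the $(q;q)_n$ factors is accounted for.

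Two smaller corrections. The puzzle weights \eqref{weights:finalweights_puzzle} arise from \eqref{generalweights} at rank $\afl=2$, not $\afl=1$: it is exactly the rank-two sum over $\mathbf{P}=(P_1,P_2)$ that yields the two indices $p_1,p_2$, so your own description is inconsistent on this point (the rank-one specialisation governs only the single-colour regions $A$, $C$, $F$). Finally, the positivity claim you append is not part of Theorem~\ref{theorem:Puzzles-Spin-Hall} (it belongs to the six-vertex Theorem~\ref{theorem:puzzlesforwavefunctions}), and it is false as stated here: the coefficients $\mathcal{C}^{\bk}_{\bll,\bm}(q,s)$ are rational in $s$ with denominators of the form $(s^{2};q)_{\ast}$, as the worked example following the theorem shows.
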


\

\begin{ex}
Fix $P = 4$ and $N = 3$, given $\bll = (1,2,0)$, $\bm = (2,1,0,0)$, and $\bk = (0,1,2,0)$, there are three puzzles with non-trivial weights. We list the puzzles with their weights given below.

\

\begin{center}
\begin{tabular}{c@{\hskip 1cm}c@{\hskip 1cm}c}
\begin{tikzpicture}[scale=0.8,baseline=(current bounding box.center)]
\draw[lightgray](0,-1) rectangle (3,3);  
\foreach \x in {0,1,2,3}{
\draw[lightgray] (0,\x)--(3,\x);
};
\foreach \x in {1,2,3}{
\draw[lightgray] (\x,-1)--(\x,3);
};
\draw[red,thick,-stealth,rounded corners] (0,2.6)--(1,2.6)--(1.5,3);
\draw[red,thick,-stealth,rounded corners] (0,2.7)--(1,2.7)--(1.35,3);
\draw[red,thick,-stealth,rounded corners] (0,1.5)--(1,1.5)--(2.5,3);
\draw[cyan,thick,stealth-,rounded corners] (0,2.5)--(2,2.5)--(3,1.5);
\draw[cyan,thick,stealth-,rounded corners] (0,2.4)--(1,2.4)--(3,0.6);
\draw[cyan,thick,stealth-,rounded corners] (0,1.4)--(2,1.4)--(3,0.5);
 \end{tikzpicture}
&
 \begin{tikzpicture}[scale=0.8,baseline=(current bounding box.center)]
\draw[lightgray](0,-1) rectangle (3,3);  
\foreach \x in {0,1,2,3}{
\draw[lightgray] (0,\x)--(3,\x);
};
\foreach \x in {1,2,3}{
\draw[lightgray] (\x,-1)--(\x,3);
};
\draw[red,thick,-stealth,rounded corners] (0,2.6)--(1,2.6)--(1.5,3);
\draw[red,thick,-stealth,rounded corners] (0,2.7)--(1,2.7)--(1.35,3);
\draw[red,thick,-stealth,rounded corners] (0,1.5)--(2,1.5)--(2.5,2)--(2.5,3);
\draw[cyan,thick,stealth-,rounded corners] (0,2.5)--(1,2.5)--(2,1.6)--(3,1.6);
\draw[cyan,thick,stealth-,rounded corners] (0,2.4)--(1,2.4)--(3,0.6);
\draw[cyan,thick,stealth-,rounded corners] (0,1.4)--(2,1.4)--(3,0.5);
 \end{tikzpicture}
&
 \begin{tikzpicture}[scale=0.8,baseline=(current bounding box.center)]
\draw[lightgray](0,-1) rectangle (3,3);  
\foreach \x in {0,1,2,3}{
\draw[lightgray] (0,\x)--(3,\x);
};
\foreach \x in {1,2,3}{
\draw[lightgray] (\x,-1)--(\x,3);
};
\draw[red,thick,-stealth,rounded corners] (0,2.6)--(1,2.6)--(1.5,3);
\draw[red,thick,-stealth,rounded corners] (0,2.7)--(1,2.7)--(1.35,3);
\draw[red,thick,-stealth,rounded corners] (0,1.5)--(2,1.5)--(2.5,2)--(2.5,3);
\draw[cyan,thick,stealth-,rounded corners] (0,2.5)--(2,2.5)--(3,1.5);
\draw[cyan,thick,stealth-,rounded corners] (0,2.4)--(1,2.4)--(3,0.6);
\draw[cyan,thick,stealth-,rounded corners] (0,1.4)--(2,1.4)--(3,0.5);
 \end{tikzpicture}\\[5 em]
$\dfrac{(1-q) (1-q^{2})^2 s (1-q^{2} s^{2})}{(1-s^{2})^3 \left(1-q s^2\right)}$ & $\dfrac{(1-q)^{2} (1-q^{2}) s (1-q^{2} s^{2})}{(1-s^{2})^3 \left(1-q s^2\right)}$& $\dfrac{(1-q)(1-q^{2})^{2}s(1-q s^{2})}{(1-s^{2})^{4}}$
\end{tabular}
\end{center}

\

Then the coefficient $\mathcal{C}^{\bk}_{\bll,\bm}$ is the sum of the weights of these three puzzles.
\end{ex}

\

\begin{remark}
    The summation on the right-hand side of~\eqref{eq:spin-theorem} is finite. This finiteness arises from the vanishing of the weights of certain tiles. 

Consider a blue particle that exits the puzzle of width $N$ through the left boundary at the $K$th row (counted from the top). If this particle had entered from a row with index strictly above $K + N$, then it would necessarily have a tile of the following type:
\begin{center}
\begin{tikzpicture}[scale=0.6,baseline=-2pt]
 \draw[gray] (0,0) rectangle (1,1); 
 \draw[cyan,<-] (0.5,1)--(0.5,0);
 \draw[cyan,<-] (0.6,1)--(0.6,0);
 \node at (0.5,-0.3) {$\ss (0,k)$};
 \node at (-0.7,0.5) {$\ss (0,0)$};
 \node at (0.5,1.3) {$\ss (0,k)$};
 \node at (1.7,0.5) {$\ss (0,0)$};
\end{tikzpicture}
\end{center}
This is illustrated in the pictures below. However, this tile has weight zero. This imposes a strict upper bound on the rows from which particles can enter, and thus ensures the finiteness of the summation in~\eqref{eq:spin-theorem}.

\[
 \begin{tikzpicture}[scale=0.8,baseline=(current bounding box.center)]
\draw[lightgray](0,-3) rectangle (3,3);  
\foreach \x in {-2,-1,0,1,2,3}{
\draw[lightgray] (0,\x)--(3,\x);
};
\foreach \x in {1,2,3}{
\draw[lightgray] (\x,-3)--(\x,3);
};
\draw[red,thick,-stealth,rounded corners] (0,1.5)--(1,1.5)--(2.5,3);
\draw[cyan,thick,stealth-,rounded corners] (0,1.4)--(3,-1.5);
\end{tikzpicture}
\hspace{2cm}
\begin{tikzpicture}[scale=0.8,baseline=(current bounding box.center)]
\draw[lightgray](0,-3) rectangle (3,3);  
\foreach \x in {-2,-1,0,1,2,3}{
\draw[lightgray] (0,\x)--(3,\x);
};
\foreach \x in {1,2,3}{
\draw[lightgray] (\x,-3)--(\x,3);
};
\draw[red,thick,-stealth,rounded corners] (0,1.5)--(1,1.5)--(2.5,3);
\draw[cyan,thick,stealth-,rounded corners] (0,1.4)--(2.5,-1)--(2.5,-2)--(3,-2.5);
 \end{tikzpicture}
\]
\end{remark}

\section{Six vertex Puzzles}
\label{sec:proof_6v}
To prove Theorem ~\ref{theorem:puzzlesforwavefunctions}, we return to vertex weights of Theorem \ref{theorem:weightsoftheYBE}, in the case $r=2$. We use two types of weights $W_{\tl,\tm}(x;q;\ba,\bb,\bc,\bd)$, as defined in \ref{generalweights}. The first has $\tl=\tm=1$, and the second has $\tl=1$ with 
$\tm$ generic. Graphically, tiles with $\tl=1$ and generic $\tm$ are shaded gray to distinguish them from the $\tl=\tm=1$ case. We begin by considering the Yang--Baxter equation illustrated below. We prove Theorem~\ref{theorem:puzzlesforwavefunctions} by computing the partition functions on both sides of Equation~\eqref{yangbaxterlatticemodel-sixvertex}.

\begin{multline}
\label{yangbaxterlatticemodel-sixvertex}
 \begin{tikzpicture}[scale=0.65,baseline=(current bounding box.center)]
 \draw[white,fill=lightgray] (0:0)++(-60:5)--++(0:1)--++(60:6)--++(120:4)--++(0:-1)--++(-60:4)--++(-120:6);
\draw (0:0)++(-60:1) coordinate (A)--++(60:2) coordinate (AA)--++(60:4) coordinate (B)--++(0:5) coordinate (C)--++(-60:4) coordinate (D)--++(-120:6) coordinate (E);
\draw (A)--++(-60:4) coordinate (F)--++(0:5);
\draw (F)--++(60:6) coordinate (G)--++(120:4);
\draw (G)--(D);
\draw[gray] (AA)++(60:-1)--++(-60:4)--++(0:5);
\foreach \x in {1,2,3,4}{
\draw[gray] (F)++(0:\x)--++(60:6)--++(120:4);
\draw[gray] (A)++(-60:\x)--++(60:6)--++(0:5);
\draw[gray] (A)++(60:\x)--++(-60:4)--++(0:5);
};
\foreach \x in {1,2,3,4,5}{
\draw[gray] (A)++(60:\x)--++(-60:4)--++(0:5);
};
\foreach \x in {1,2,3,4,5,6}
{
\draw[fill=cyan] (F)++(0:5)++(60:\x-0.5) circle (0.1);
};
\foreach \x in {1,2,3,4}
{\draw[fill=cyan] (A)++(-60:\x-0.5) circle (0.1);
\draw[fill=white] (F)++(0:5)++(60:6)++(120:\x-0.5) circle (0.1);};
\path (F)++(0:0.5)++(-90:0.4) node {$\ss \textcolor{red}{n}$};
\draw[fill=white] (A)++(0:-0.5)++(60:6)++(0:1) circle (0.1);
\path (F)++(-90:0.5)++(0:3) node {$\xleftarrow[w]{}$};
\path (F)++(60:6)++(0:3)++(120:4.5) node {$\xleftarrow[l]{}$};
\path (A)++(0:-0.5)++(60:3.5) node[rotate=60]{$\xleftarrow[m]{}$};
\path (0,0)++(-60:2.5)++(60:3) node {$\fontsize{20pt}{0}{\bl{A}}$};
\path (F)++(0:2.5)++(60:3) node {$\fontsize{20pt}{0}{\bl{B}}$};
\path (F)++(0:2.5)++(60:6)++(120:2.5) node {$\fontsize{20pt}{0}{\bl{C}}$};
\path (F)++(0:2.5)++(60:6)++(120:2.5) node {$\fontsize{20pt}{0}{\bl{C}}$};
\foreach \x in {1,2,3,4}{
\draw[->] (A)++(-60:\x-0.5)++(-120:1)--++(60:0.5);};
\path (A)++(-60:1-0.5)++(-120:1.5) node {$\ss (x_1,1) $};
\path (A)++(-60:2-0.5)++(-120:1.5) node {$\ss (x_2,1) $};
\path (A)++(-60:3-0.5)++(-120:1.5) node[rotate=-60] {$\ss \ldots$};
\path (A)++(-60:4-0.5)++(-120:1.5) node {$\ss (x_{n},1) $};
\path (F)++(0:0.4)++(-90:1.8) node {$\ss (z_0,\mathtt{M})$};
\draw[->] (F)++(0:0.5)++(-90:1.5)--++(90:0.5);
\draw[->] (F)++(0:1.5)++(-90:1.5)--++(90:0.5);
\draw[->] (F)++(0:2.5)++(-90:1.5)--++(90:0.5);
\draw[->] (F)++(0:3.5)++(-90:1.5)--++(90:0.5);
\draw[->] (F)++(0:4.5)++(-90:1.5)--++(90:0.5);
\path (F)++(0:1.5)++(-90:1.8)++(0:0.1) node {$\ss (z_{N},1) $};
\path (F)++(0:3.3)++(-90:1.8) node {$\ss \dots $};
\path (F)++(0:4.5)++(-90:1.8) node {$\ss (z_1,1) $};
\foreach\x in {1,2,3,4,5,6}{
\draw[->] (A)++(60:\x-0.5)++(120:1.5)--++(-60:0.5);
};
\path (A)++(60:0.5)++(120:2)++(0:-0.2) node {$\ss (q y_{P},1)$};
\path (A)++(60:1.5)++(120:2)++(0:-0.2) node {$\ss (q y_{P-1},1)$};
\path (A)++(60:2.5)++(120:2)++(90:0.5) node[rotate=60] {$ \dots$};
\path (A)++(60:4.5)++(120:2)++(0:-0.2) node {$\ss (q y_{2},1)$};
\path (A)++(60:5.5)++(120:2)++(0:-0.2) node {$\ss (q y_1,1)$};
\end{tikzpicture}
=
 \begin{tikzpicture}[scale=0.65,baseline=(current bounding box.center)]
 \draw[white,fill=lightgray] (F)--++(120:4)--++(60:6)--++(0:1)--++(-120:6)--++(-60:4)--++(0:-1);
\draw (0:0)++(-60:1) coordinate (A)--++(60:2) coordinate (AA)--++(60:4) coordinate (B)--++(0:5) coordinate (C)--++(-60:4) coordinate (D)--++(-120:4) coordinate (EE)--++(-120:2) coordinate (E);
\draw (A)--++(-60:4) coordinate (F)--++(0:5);
\draw (A)--++(0:5) coordinate (G);
\draw (G)--++(60:6);
\draw (G)--++(-60:4);
\path (F)++(0:0.5)++(-90:0.4) node {$\ss \textcolor{red}{n}$};
\foreach \x in {1,2,3,4}{
\draw[gray] (A)++(-60:\x)--++(0:5)--++(60:6);
};
\foreach \x in {1,2,3,4,5}{
\draw[gray] (A)++(-60:4)++(0:\x)--++(120:4)--++(60:6);
};
\foreach \x in {1,2,3,4,5}{
\draw[gray] (A)++(60:\x)--++(0:5)--++(-60:4);
};
\foreach \x in {1,2,3,4,5,6}
{
\draw[fill=cyan] (F)++(0:5)++(60:\x-0.5) circle (0.1);
};
\foreach \x in {1,2,3,4}
{\draw[fill=cyan] (A)++(-60:\x-0.5) circle (0.1);
\draw[fill=white] (F)++(0:5)++(60:6)++(120:\x-0.5) circle (0.1);};
\draw[fill=white] (A)++(0:-0.5)++(60:6)++(0:1) circle (0.1);
\path (F)++(-90:0.5)++(0:3) node {$\xleftarrow[w]{}$};
\path (F)++(60:6)++(0:3)++(120:4.5) node {$\xleftarrow[l]{}$};
\path (A)++(0:-0.5)++(60:3.5) node[rotate=60]{$\xleftarrow[m]{}$};
\path (0,0)++(0:3)++(60:3) node {$\fontsize{20pt}{0}{\bl{E}}$};
\path (F)++(0:2.5)++(120:2.5) node {$\fontsize{20pt}{0}{\bl{D}}$};
\path (0,0)++(0:5)++(-60:2.5)++(60:3) node {$\fontsize{20pt}{0}{\bl{F}}$};
\path (F)++(0:0.3)++(-60:1.8) node {$\ss (z_0,\mathtt{M})$};
\draw[->] (F)++(0:0.5)++(-60:1.5)--++(120:0.5);
\draw[->] (F)++(0:1.5)++(-60:1.5)--++(120:0.5);
\draw[->] (F)++(0:2.5)++(-60:1.5)--++(120:0.5);
\draw[->] (F)++(0:3.5)++(-60:1.5)--++(120:0.5);
\draw[->] (F)++(0:4.5)++(-60:1.5)--++(120:0.5);
\path (F)++(0:3.3)++(-60:1.8) node {$\ss \dots $};
\path (F)++(0:4.5)++(-60:1.8) node {$\ss (z_1,1) $};
\path (F)++(0:1.5)++(-60:1.8)++(0:0.1) node {$\ss (z_{N},1) $};
\foreach \x in {1,2,3,4}{
\draw[->] (A)++(-60:\x-0.5)++(0:-0.3)++(0:-1)--++(0:0.5);};
\path (A)++(-60:1-0.5)++(0:-2) node {$\ss (x_1,1) $};
\path (A)++(-60:2-0.5)++(0:-2) node {$\ss (x_2,1) $};
\path (A)++(-60:3-0.5)++(0:-2) node {$\ss \ldots$};
\path (A)++(-60:4-0.5)++(0:-2.2) node {$\ss (x_{n},1) $};
\foreach \x in {1,2,3,4,5,6}{
\draw[->] (A)++(60:\x-0.5)++(0:-0.5)++(0:-1)--++(0:0.5);};
\path (A)++(60:1-0.5)++(0:-2.2) node {$\ss (q y_{P},1)$};
\path (A)++(60:2-0.5)++(0:-2.4) node {$\ss (q y_{P-1},1)$};
\path (A)++(60:2.5)++(0:-2)++(60:0.5) node[rotate=60] {$ \dots$};
\path (A)++(60:5-0.5)++(0:-2.4) node {$\ss (q y_2,1)$};
\path (A)++(60:6-0.5)++(0:-2.4) node {$\ss (q y_1,1)$};
\end{tikzpicture}
\end{multline}

Before proceeding, we set up the necessary notation and conventions. We represent: – $(1,0)$ as a red particle \begin{tikzpicture} \rbull{0}{0}{0.1} \end{tikzpicture}, $(0,1)$ as a blue particle \begin{tikzpicture} \cbull{0}{0}{0.1} \end{tikzpicture} ,
– and $(0,0)$ as a white particle (also referred to as a hole) \begin{tikzpicture} \ebull{0}{0}{0.1} \end{tikzpicture}. We also use \textcolor{red}{$n$} to denote the vector $(n,0)$. We denote the partition function of a region by 
$\mathcal{Z}$, using the letter of the region as a subscript.

The binary string $m$ has length $P$, with 
$P-n$ blue particles and 
$n$ white particles. Consequently, only blue particles can enter region $\bl{A}$. At the bottom boundary of region $\bl{B}$, the leftmost edge is occupied by $(n,0)$, which may be interpreted as a reservoir of red particles. The strings 
$w$ and $l$ are binary strings consisting of red particles and holes, with $l$ containing exactly $n$ more red particles than 
$w$.

With this notation in place, we now compute the partition functions of each region.

\subsection{Region \texorpdfstring{$A$}{A}}
We begin by analysing the boundary conditions of $\bl{A}$. Only blue particles enter $\bl{A}$, and they can either proceed into $\bl{B}$ or $\bl{C}$. However, once a blue particle enters 
$\bl{C}$, it cannot exit, since the top and right boundaries allow no blue particles to exit. Therefore, all the blue particles enter $\bl{B}$. Consequently, the weight of $\bl{A}$ is the partition function of the lattice below:
\begin{equation}
\mathcal{Z}_{\bl{A}}(x_{1},\dots,x_{n};y_{1},\dots,y_{P}):=
\begin{tikzpicture}[scale=0.6,baseline=(current bounding box.center)]
\draw (0:0)--++(180:4);
\draw (0:0)--++(90:2);
\draw (0:0)++(180:4)--++(90:6)--++(0:4)--++(-90:4);
\foreach\x in {1,2,3,4,5,6}{
 \draw[fill=cyan] (0:0)++(90:\x-0.5) circle (0.1);};
 \draw[gray] (0:0)++(90:1)--++(180:4);
 \draw[gray] (0:0)++(90:2)--++(180:4);
\foreach\x in {1,2,3,4}{
 \draw[gray] (0:0)++(180:\x)--++(90:2);
 \draw[gray] (0:0)++(90:2)++(90:\x)--++(180:4);
\draw[gray] (0:0)++(90:2)++(180:\x)--++(90:4);
 }l
 \foreach\x in {1,2,3,4}{
 \draw[fill=cyan] (0:0)++(180:\x-0.5) circle (0.1);
 \draw[fill=white] (0:0)++(90:6)++(180:\x-0.5) circle (0.1);
 };
 \path (0:0)++(180:4.5)++(90:2.5) node[rotate=90] {$\xleftarrow[m]{}$};
 \path (0:0)++(180:6.5)++(90:0.5) node {$\ss (qy_{P},1)$};
 \path (0:0)++(180:6.5)++(90:2) node[rotate=90] {$\dots$};
 \path (0:0)++(180:6.5)++(90:3.5) node {$\ss (qy_{3},2)$};
 \path (0:0)++(180:6.5)++(90:4.5) node {$\ss (qy_{2},1)$};
  \path (0:0)++(180:6.5)++(90:5.5) node {$\ss (qy_{1},1)$};
  \foreach \x in {1,2,3,4,5,6}{
 \draw[->] (0:0)++(180:5.5)++(90:\x-0.5)--++(0:0.5);
 };
 \foreach \x in {1,2,3,4}{
 \draw[->] (0:0)++(180:4)++(-90:1.5)++(0:\x-0.5)--++(90:0.5);
 };
  \path (0:0)++(180:4)++(-90:2)++(0:0.5) node {$\ss (x_{1},1)$};
  \path (0:0)++(180:4)++(-90:2)++(0:2) node {$\dots$};
  \path (0:0)++(180:4)++(-90:2)++(0:3.5) node {$\ss (x_{n},1)$};
\end{tikzpicture}
\end{equation}
with the tiles given below:
\begin{equation}
\label{weights:6v-regionA}
\begin{tabular}{cccccc}
\begin{tikzpicture}[baseline={(0,0)},scale=0.8]
\draw (0:0)--++(180:1)--++(90:1)--++(0:1)--++(-90:1); 
\draw[fill=white] (0,0)++(180:1)++(90:0.5) circle (0.1);
\draw[fill=white] (0,0)++(180:0.5) circle (0.1);
\draw[fill=white] (0,0)++(90:0.5) circle (0.1);
\draw[fill=white] (0,0)++(90:1)++(180:0.5) circle (0.1);
\end{tikzpicture} 
&
\qquad
\begin{tikzpicture}[baseline={(0,0)},scale=0.8]
\draw (0:0)--++(180:1)--++(90:1)--++(0:1)--++(-90:1); 
\draw[fill=cyan] (0,0)++(180:1)++(90:0.5) circle (0.1);
\draw[fill=cyan] (0,0)++(180:0.5) circle (0.1);
\draw[fill=cyan] (0,0)++(90:0.5) circle (0.1);
\draw[fill=cyan] (0,0)++(90:1)++(180:0.5) circle (0.1);
\draw[cyan,thick] (0:0)++(180:0.5)--++(45:0.6);
\draw[cyan,thick] (0:0)++(180:1)++(90:0.5)--++(45:0.6);
\end{tikzpicture}
&
\quad
\begin{tikzpicture}[baseline={(0,0)},scale=0.8]
\draw (0:0)--++(180:1)--++(90:1)--++(0:1)--++(-90:1); 
\draw[fill=cyan] (0,0)++(180:1)++(90:0.5) circle (0.1);
\draw[fill=white] (0,0)++(180:0.5) circle (0.1);
\draw[fill=cyan] (0,0)++(90:0.5) circle (0.1);
\draw[fill=white] (0,0)++(90:1)++(180:0.5) circle (0.1);
\draw[cyan,thick] (0:0)++(90:0.5)--++(180:1);
\end{tikzpicture}
&
\begin{tikzpicture}[baseline={(0,0)},scale=0.8]
\draw (0:0)--++(180:1)--++(90:1)--++(0:1)--++(-90:1); 
\draw[fill=white] (0,0)++(180:1)++(90:0.5) circle (0.1);
\draw[fill=cyan] (0,0)++(180:0.5) circle (0.1);
\draw[fill=white] (0,0)++(90:0.5) circle (0.1);
\draw[fill=cyan] (0,0)++(90:1)++(180:0.5) circle (0.1);
\draw[cyan,thick] (0:0)++(180:0.5)--++(90:1);
\end{tikzpicture}
&
\begin{tikzpicture}[baseline={(0,0)},scale=0.8]
\draw (0:0)--++(180:1)--++(90:1)--++(0:1)--++(-90:1); 
\draw[fill=cyan] (0,0)++(180:1)++(90:0.5) circle (0.1);
\draw[fill=white] (0,0)++(180:0.5) circle (0.1);
\draw[fill=white] (0,0)++(90:0.5) circle (0.1);
\draw[fill=cyan] (0,0)++(90:1)++(180:0.5) circle (0.1);
\draw[cyan,thick] (0:0)++(180:1)++(90:0.5)--++(45:0.6);
\end{tikzpicture}
&
\begin{tikzpicture}[baseline={(0,0)},scale=0.8]
\draw (0:0)--++(180:1)--++(90:1)--++(0:1)--++(-90:1); 
\draw[fill=white] (0,0)++(180:1)++(90:0.5) circle (0.1);
\draw[fill=cyan] (0,0)++(180:0.5) circle (0.1);
\draw[fill=cyan] (0,0)++(90:0.5) circle (0.1);
\draw[fill=white] (0,0)++(90:1)++(180:0.5) circle (0.1);
\draw[cyan,thick] (0:0)++(180:0.5)--++(45:0.6);
\end{tikzpicture}\\ [2 em]
$1$ & \qquad $1$ & \qquad $\dfrac{1-x y^{-1}q^{-1}}{1-x y^{-1}}$ & \quad ${\dfrac{q\left(1-{x y^{-1}q^{-1}}\right)}{1-xy^{-1}}}$ &\quad $ \dfrac{q^{-1}(1-q)xy^{-1}}{1-xy^{-1}}$ & \quad $\dfrac{(1-q)}{1-x y^{-1}}$ 
\end{tabular}
\end{equation}

\begin{lemma}
The following equality holds:
\begin{equation}
 \label{partitionfucntion: regionA}   
 \mathcal{Z}_{\bl{A}}(x_{1},\dots,x_{n};y_{1},\dots,y_{P})|_{y_{i}=q^{-1}}= \mathrm{H}_{\overline{m}}(x_1,\dots,x_n)
\end{equation}
where $\overline{m}=(1-m_{1},1-m_{2},\dots,1-m_{P})$.
\end{lemma}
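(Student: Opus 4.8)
\emph{Strategy.} The plan is to evaluate $\mathcal{Z}_{\bl{A}}$ directly as a partition function and to recognise it as a six-vertex wave function. First I would finish the boundary analysis begun above: the left boundary of $\bl{A}$ (encoded by $m$) carries $P-n$ blue particles and these, together with the blue particles entering from the bottom edges of the $n$ columns, are the only ones in play; since no blue particle can leave through the top or right boundary of $\bl{A}$, and none can ever escape $\bl{C}$ once it enters it, every blue particle is forced into $\bl{B}$. Hence $\mathcal{Z}_{\bl{A}}$ is exactly the partition function of the displayed rectangular lattice: $P$ rows carrying $qy_1,\dots,qy_P$, $n$ columns carrying $x_1,\dots,x_n$, left boundary $m$, all sites occupied along the bottom and along the right, all holes along the top, with the six vertex weights~\eqref{weights:6v-regionA}.

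\emph{Complementation.} Next I would apply the involution $e\mapsto 1-e$ to every edge label of every configuration. This is a bijection of configurations, and on the boundary it carries the data $(m;\ \text{all }1\text{ below};\ \text{all }0\text{ above};\ \text{all }1\text{ on the right})$ of $\mathcal{Z}_{\bl{A}}$ precisely to $(\overline{m};\ \text{all }0\text{ below};\ \text{all }1\text{ above};\ \text{all }0\text{ on the right})$, which is exactly the boundary data of Definition~\ref{def:6v-wavefunction} for $\mathrm{H}_{\overline{m}/\varnothing}=\mathrm{H}_{\overline{m}}$. So the configurations of $\mathcal{Z}_{\bl{A}}$ are in bijection with the configurations contributing to $\mathrm{H}_{\overline{m}}$.

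\emph{Matching weights.} It remains to identify the weights. Both families come from the Bosnjak--Mangazeev solution: the entries of~\eqref{weights:6v-regionA} are the values $W_{1,1}(\,\cdot\,;q;\ba,\bb,\bc,\bd)$ of Theorem~\ref{theorem:weightsoftheYBE} at $\afl=1$, $\tl=\tm=1$, while by the remark after Definition~\ref{def:6v-wavefunction} the entries of~\eqref{weights:defining-H} are the values $W_{1,1}(qy/x;q;1-\ba,1-\bb,1-\bc,1-\bd)$. Complementation interchanges the vertical-straight and horizontal-straight vertices, interchanges the two turning vertices, and fixes the empty and the fully occupied ones; so, once the spectral parameter attached to a vertex of $\mathcal{Z}_{\bl{A}}$ in row $i$, column $j$ is matched with the argument appearing in the $\mathrm{H}$-weight of the corresponding vertex, one checks that the six weights of~\eqref{weights:6v-regionA} are carried to the six of~\eqref{weights:defining-H}. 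Summing over the bijection then exhibits $\mathcal{Z}_{\bl{A}}$ as a value of $\mathrm{H}_{\overline{m}}$ at the corresponding auxiliary variables, and the lemma is obtained by specialising those variables to $q^{-1}$, which by definition is $\mathrm{H}_{\overline{m}}(x_1,\dots,x_n)$.

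\emph{Main obstacle.} The one step I would write out with care is the spectral-parameter bookkeeping in this last comparison: one must track the powers of $q$ coming from the extra factor $q$ in the row labels $qy_i$ of the $\mathcal{Z}_{\bl{A}}$ lattice against the shift $y\mapsto qy$ built into the weights~\eqref{weights:defining-H}, in order to pin down exactly which substitution of the $y_i$'s makes the two partition functions agree. Should the vertex-by-vertex identification of weights hold only up to a factor supported on the vertical-straight vertices, one then runs a row-wise (equivalently column-wise) counting argument, in the spirit of the proof of Proposition~\ref{prop:relationbetween the two f's}, showing that this factor collapses to an overall prefactor which becomes trivial on the locus $y_i=q^{-1}$. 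The boundary analysis and the complementation bijection are routine.
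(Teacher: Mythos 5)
Your proposal is correct and follows essentially the same route as the paper: force all blue particles into region $\bl{B}$ by the boundary analysis, complement every edge label to turn the boundary data of $\mathcal{Z}_{\bl{A}}$ into that of $\rh_{\overline{m}}$, and then match the six vertex weights (the paper simply tabulates the complemented weights at $y_i=q^{-1}$ and compares them with~\eqref{weights:defining-H}, which is the bookkeeping you flag as the main point to verify). The fallback counting argument you mention is not needed, since the weights agree vertex by vertex after the specialisation.
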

\begin{proof}
We redraw the vertices and the lattice of region $\bl{A}$ by complementing the particles on all edges and also substituting $y_{i}=q^{-1}$. We then have:
\begin{equation}
\mathcal{Z}_{\bl{A}}=
\begin{tikzpicture}[scale=0.6,rotate=180,baseline=(current bounding box.center)]
\draw (0:0)--++(180:4);
\draw[gray] (0:0)++(90:1)--++(180:4);
\draw[gray] (0:0)++(90:2)--++(180:4);
\draw (0:0)--++(180:4)--++(90:6)--++(0:4)--++(-90:6);
\foreach\x in {1,2,3,4}{
 \draw[gray] (0:0)++(90:2)++(90:\x)--++(180:4);
\draw[gray] (0:0)++(90:2)++(180:\x)--++(90:4);
 };
 \foreach\x in {1,2,3,4}{
\draw[gray] (0:0)++(180:\x)--++(90:6);
 };
\foreach\x in {1,2,3,4,5,6}{
 \draw[gray] (0:0)++(90:\x)--++(180:4);
 \draw[fill=white] (0:0)++(180:4)++(90:\x-0.5) circle (0.1);
 };
 \foreach\x in {1,2,3,4}{
 \draw[fill=blue] (0:0)++(180:\x-0.5) circle (0.1);
 \draw[fill=white] (0:0)++(90:6)++(180:\x-0.5) circle (0.1);
 };
 \path (0:0)++(0:0.5)++(90:2.5) node[rotate=90]{$\xleftarrow[\overline{m}]{}$};
\end{tikzpicture}
\end{equation}
with the following weights:
\begin{equation}
\label{weights:6v-regionA-complemented}
\begin{tabular}{cccccc}
\begin{tikzpicture}[baseline={(0,0)},scale=0.8]
\draw (0:0)--++(180:1)--++(90:1)--++(0:1)--++(-90:1); 
\draw[fill=blue] (0,0)++(180:1)++(90:0.5) circle (0.1);
\draw[fill=blue] (0,0)++(180:0.5) circle (0.1);
\draw[fill=blue] (0,0)++(90:0.5) circle (0.1);
\draw[fill=blue] (0,0)++(90:1)++(180:0.5) circle (0.1);
\draw[blue,thick] (0:0)++(180:0.5)--++(45:0.6);
\draw[blue,thick] (0:0)++(180:1)++(90:0.5)--++(45:0.6);
\end{tikzpicture}
&\qquad
\begin{tikzpicture}[baseline={(0,0)},scale=0.8]
\draw (0:0)--++(180:1)--++(90:1)--++(0:1)--++(-90:1); 
\draw[fill=white] (0,0)++(180:1)++(90:0.5) circle (0.1);
\draw[fill=white] (0,0)++(180:0.5) circle (0.1);
\draw[fill=white] (0,0)++(90:0.5) circle (0.1);
\draw[fill=white] (0,0)++(90:1)++(180:0.5) circle (0.1);
\end{tikzpicture}
&
\begin{tikzpicture}[baseline={(0,0)},scale=0.8]
\draw (0:0)--++(180:1)--++(90:1)--++(0:1)--++(-90:1); 
\draw[fill=white] (0,0)++(180:1)++(90:0.5) circle (0.1);
\draw[fill=blue] (0,0)++(180:0.5) circle (0.1);
\draw[fill=white] (0,0)++(90:0.5) circle (0.1);
\draw[fill=blue] (0,0)++(90:1)++(180:0.5) circle (0.1);
\draw[blue,thick] (0:0)++(180:0.5)--++(90:1);
\end{tikzpicture}
&
\begin{tikzpicture}[baseline={(0,0)},scale=0.8]
\draw (0:0)--++(180:1)--++(90:1)--++(0:1)--++(-90:1); 
\draw[fill=blue] (0,0)++(180:1)++(90:0.5) circle (0.1);
\draw[fill=white] (0,0)++(180:0.5) circle (0.1);
\draw[fill=blue] (0,0)++(90:0.5) circle (0.1);
\draw[fill=white] (0,0)++(90:1)++(180:0.5) circle (0.1);
\draw[blue,thick] (0:0)++(90:0.5)--++(180:1);
\end{tikzpicture}
&
\begin{tikzpicture}[baseline={(0,0)},scale=0.8]
\draw (0:0)--++(180:1)--++(90:1)--++(0:1)--++(-90:1); 
\draw[fill=white] (0,0)++(180:1)++(90:0.5) circle (0.1);
\draw[fill=blue] (0,0)++(180:0.5) circle (0.1);
\draw[fill=blue] (0,0)++(90:0.5) circle (0.1);
\draw[fill=white] (0,0)++(90:1)++(180:0.5) circle (0.1);
\draw[blue,thick] (0:0)++(180:0.5)--++(45:0.6);
\end{tikzpicture}
&
\begin{tikzpicture}[baseline={(0,0)},scale=0.8]
\draw (0:0)--++(180:1)--++(90:1)--++(0:1)--++(-90:1); 
\draw[fill=blue] (0,0)++(180:1)++(90:0.5) circle (0.1);
\draw[fill=white] (0,0)++(180:0.5) circle (0.1);
\draw[fill=white] (0,0)++(90:0.5) circle (0.1);
\draw[fill=blue] (0,0)++(90:1)++(180:0.5) circle (0.1);
\draw[blue,thick] (0:0)++(180:1)++(90:0.5)--++(45:0.6);
\end{tikzpicture}\\[2 em]
$1$ & \qquad $1$ & \qquad $\dfrac{1-x}{1-q x }$ & \quad ${\dfrac{q\left(1-{x}\right)}{1-q x}}$ &\quad $ \dfrac{(1-q)x}{1-q x}$ & \quad $\dfrac{(1-q)}{1-qx}$ 
\end{tabular}
\end{equation}

From ~\cref{def:6v-wavefunction}, we conclude that
\begin{equation}
\cz_{\bl{A}} (x_{1},\dots,x_{n};q^{-1},\dots,q^{-1})= \rh_{\overline{m}}(x_1,\dots,x_n).
\end{equation}
\end{proof}

\subsection{Region \texorpdfstring{$B$}{B}}

Since we are interested in the structure constants, we divide the right-hand side of~\ref{yangbaxterlatticemodel-sixvertex} by the partition function $\mathcal{Z}_{\bl{B}}$. This amounts to normalising the weights in region $\bl{B}$ so that the vertex
\begin{tikzpicture}[scale=0.5,baseline=(current bounding box.center)] \draw[fill=lightgray] (0:0)--++(0:1)--++(90:1)--++(180:1)--++(-90:1); \draw ++(90:0.5)--++(0:1); \draw[fill=cyan] (0:0)++(90:0.5) circle (0.1); \draw[fill=cyan] (0:0)++(0:1)++(90:0.5) circle (0.1); \end{tikzpicture} has weight 1. (This value is independent of the top and bottom labels.)

A blue particle enters region $\bl{B}$ in every row. Consider the blue particle entering from the bottom row: since no other blue particles enter from below, it must exit on the same row. This forces the entire bottom row to be fixed. Repeating this argument row by row, we find that the entire region 
$\bl{B}$ is frozen.

Therefore, we have:
\begin{equation}
\label{partitionfunction:regionB}
  \mathcal{Z}_{\bl{B}}=  1.
\end{equation}

 \begin{figure}[h]
\begin{tikzpicture}[scale=0.6]
\draw[white,fill=lightgray] (0:0)--++(0:1)--++(90:6)--++(180:1)--++(-90:6);
\draw (0:0)--++(90:2) coordinate (A)--++(90:4)--++(0:5)--++(-90:6)--++(180:5);
\foreach\x in {1,2,3,4}{
\draw[gray] (0:0)++(0:\x)--++(90:6);
\draw[gray] (0:0)++(90:\x)--++(0:5);
};
\foreach\x in {1,2,3,4,5}{
\draw[gray] (0:0)++(90:\x)--++(0:5);
\draw[cyan,thick] (0:0)++(90:\x-0.5)--++(0:5);
\draw[cyan,thick] (0:0)++(90:5.5)--++(0:5);
};
\foreach\x in {1,2,3,4,5,6}{
\draw[fill=cyan] (0,0)++(90:\x-0.5) circle (0.1);
\draw[fill=cyan] (0,0)++(0:5)++(90:\x-0.5) circle (0.1);};
\draw[red,thick] (0:0)++(0:0.4)--++(90:6);
\draw[red,thick] (0:0)++(0:0.5)--++(90:6);
\draw[red,thick] (0:0)++(0:0.6)--++(90:6);
\path (0:0)++(0:2.5)++(90:6.5) node {$\xleftarrow[w]{}$};
\path (0:0)++(0:2.5)++(-90:0.5) node {$\xleftarrow[w]{}$};
\path (0:0)++(0:0.5)++(-90:0.5) node {$\textcolor{red}{n}$};
\path (0:0)++(0:0.5)++(90:6.5) node {$\textcolor{red}{n}$};
\end{tikzpicture}
\end{figure}

\subsection{Region \texorpdfstring{$C$}{C}}
As in region $\bl{B}$, we normalise the weights in regions 
$\bl{C}$ and 
$\bl{D}$ so that the vertex
\begin{tikzpicture}[scale=0.7,baseline= (current bounding box.center)] 
\draw (0:0)--++(180:1)--++(90:1)--++(0:1)--++(-90:1);
\draw[fill=cyan] (0,0)++(180:1)++(90:0.5) circle (0.1);
\draw[fill=white] (0,0)++(180:0.5) circle (0.1);
\draw[fill=cyan] (0,0)++(90:0.5) circle (0.1); 
\draw[fill=white] (0,0)++(90:1)++(180:0.5) circle (0.1); \draw[cyan,thick] (0:0)++(90:0.5)--++(180:1); \end{tikzpicture} has weight $1$.

From our earlier analysis of regions 
$\bl{A}$ and 
$\bl{B}$, it follows that all the boundaries of 
$\bl{C}$ are fixed. Specifically, 
$n$ red particles enter from the bottom of the first column, and none exit from its top edge. Since there are $n$ rows, each red particle must turn right in every row, resulting in the completely frozen first column as depicted below.

\begin{equation*}
\label{partitionfunction: regionC}
\begin{tikzpicture}[scale=0.6,baseline=(current bounding box.center)]
\draw[white,fill=lightgray] (0:0)--++(0:1)--++(90:4)--++(180:1)--++(-90:4);
\draw (0:0) coordinate (A)--++(0:5) coordinate (B)--++(90:4) coordinate (C)--++(180:5) coordinate (D)--cycle;
\foreach \x in {1,2,3,4}{
\draw[gray] (0:0)++(0:\x)--++(90:4);
\draw[gray] (0:0)++(90:\x)--++(0:5);
};
\foreach\x in {1,2,3,4}{
\draw[fill=white] (0,0)++(90:\x-0.5) circle (0.1);
\draw[fill=white] (0,0)++(0:5)++(90:\x-0.5) circle (0.1);};
\draw[fill=white] (0,0)++(0:0.5)++(90:4) circle (0.1);
\path (0:0)++(-90:0.5)++(0:2.5) node {$\xleftarrow[w]{}$};
\path (0:0)++(90:4.5)++(0:2.5) node {$\xleftarrow[l]{}$};
\path (0:0)++(-90:0.5)++(0:0.5) node {$\textcolor{red}{n}$};
\end{tikzpicture}
\hspace{2cm}
\begin{tikzpicture}[scale=0.6,baseline=(current bounding box.center)]
\draw[white,fill=lightgray] (0:0)--++(0:1)--++(90:4)--++(180:1)--++(-90:4);
\draw (0:0) coordinate (A)--++(0:5) coordinate (B)--++(90:4) coordinate (C)--++(180:5) coordinate (D)--cycle;
\foreach \x in {1,2,3,4}{
\draw[gray] (0:0)++(0:\x)--++(90:4);
\draw[gray] (0:0)++(90:\x)--++(0:5);
};
\draw[->,red,rounded corners,thick] (A)++(0:0.4)--++(90:3.5)--++(0:0.6);
\draw[->,red,rounded corners,thick] (A)++(0:0.5)--++(90:2.5)--++(0:0.5);
\draw[->,red,rounded corners,thick] (A)++(0:0.6)--++(90:1.5)--++(0:0.4);
\draw[->,red,rounded corners,thick] (A)++(0:0.7)--++(90:0.5)--++(0:0.3);
\foreach\x in {1,2,3,4}{
\draw[fill=white] (0,0)++(90:\x-0.5) circle (0.1);
\draw[fill=white] (0,0)++(0:5)++(90:\x-0.5) circle (0.1);};
\draw[fill=white] (0,0)++(0:0.5)++(90:4) circle (0.1);
\path (0:0)++(-90:0.5)++(0:2.5) node {$\xleftarrow[w]{}$};
\path (0:0)++(90:4.5)++(0:2.5) node {$\xleftarrow[l]{}$};
\end{tikzpicture}
\end{equation*}

Given that the weight of the vertex
\begin{center}
\begin{tikzpicture}[scale=1,baseline=(current bounding box.center)]
 \draw[gray] (0,0) rectangle (1,1); 
\node at (0.5,-0.3) {$\ss m$};
\node at (-0.3,0.5) {$\ss 0$};
\node at (0.5,1.3) {$\ss m-1$};
\node at (1.3,0.5) {$\ss 1$};
\draw[->] (0.5,-1) node[below] {$\ss z_{0}$}--(0.5,-0.7);
\draw[->] (-1,0.5) node[left] {$\ss x$}--(-0.7,0.5);
\end{tikzpicture}=\hspace{1.5mm}$\dfrac{(q;q)_{m}}{1-z_{0}x^{-1}}$
\end{center} the first column freezes with an overall weight of $\dfrac{(q;q)_{n}}{\prod^{n}_{i=1}(1-z_{0}x^{-1}_i)}$.

We then get that:
\begin{equation}
\mathcal{Z}_{\bl{C}}
=
    \dfrac{(q;q)_{n}}{\prod^{n}_{i=1}(1-z_{0}x^{-1}_i)} \times 
\begin{tikzpicture}[scale=0.6,baseline=(current bounding box.center)]
\draw (0:0) coordinate (A)--++(90:4) coordinate (B)--++(0:4) coordinate (C)-- ++(-90:4) coordinate (D)--cycle;
\path (0:0)++(-90:0.5)++(0:2.5) node {$\xleftarrow[w]{}$};
\path (0:0)++(90:4.5)++(0:2.5) node {$\xleftarrow[l]{}$};
\foreach\x in {1,2,3,4}{
\draw[gray] (0,0)++(0:\x)--++(90:4);
\draw[gray] (0,0)++(90:\x)--++(0:4);};
\foreach\x in {1,2,3,4}{
\draw[fill=red] (0,0)++(90:\x-0.5) circle (0.1);
\draw[fill=white] (0,0)++(0:4)++(90:\x-0.5) circle (0.1);};
\end{tikzpicture}
\end{equation}

We denote the partition function of the remaining lattice of $\bl{C}$ as $\widetilde{\rh}_{l/w}(x_1,\dots,x_n;z_1,\dots,z_{N})$. 

\

\begin{lemma} The following equation holds:
   \begin{equation}
 \label{eq:relationshipbetweenfandtildef}
 (-1)^{n}q^{-\sum^{N}\limits_{i=1}(i-1)(l_{i}-w_{i})}\widetilde{\rh}_{l/w}(x_1,\dots,x_n;z_1,\dots,z_{N})|_{z_{i}=q}=\rh_{l/w}(x_1,\dots,x_n).
\end{equation}

\end{lemma}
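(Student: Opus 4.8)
The plan is to prove \eqref{eq:relationshipbetweenfandtildef} by identifying the two lattices configuration by configuration. Both $\widetilde{\rh}_{l/w}(x_1,\dots,x_n;z_1,\dots,z_N)$ and $\rh_{l/w}(x_1,\dots,x_n;y_1,\dots,y_N)$ are partition functions of an $n$-by-$N$ six-vertex lattice: in the first, the lines carrying $x_1,\dots,x_n$ are horizontal and the lines carrying $z_1,\dots,z_N$ are vertical, with $l$ on the top boundary, $w$ on the bottom, all red particles on the left and all holes on the right; in the second (Definition~\ref{def:6v-wavefunction}) the $x$-lines are vertical and the $y$-lines horizontal, with $l$ on the left, $w$ on the right, all holes on the bottom and all particles on the top. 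Reflecting the lattice of $\widetilde{\rh}_{l/w}$ across a diagonal interchanges its horizontal and vertical lines and carries the four boundary strings to exactly the sides they occupy in the lattice of $\rh_{l/w}$, giving a bijection between the valid configurations of the two lattices; the vertical parameters $z_j$ thereby become horizontal ones, and the conventions on the two sides are reconciled by the specialisations $z_j=q$ and $y_j=q^{-1}$ (note $q=(q^{-1})^{-1}$).

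The main step is the vertex-by-vertex comparison of weights. I would write the weights of the remaining lattice of region $\bl{C}$ explicitly — these are the six-vertex weights of Theorem~\ref{theorem:weightsoftheYBE} at $\tl=\tm=1$, with a spectral parameter given by a ratio of an $x$- and a $z$-variable, normalised so that the frozen vertex has weight $1$ — and check that, after the substitutions $z_j=q$ and $y_j=q^{-1}$, each of the six vertex types in the $\widetilde{\rh}$-lattice has weight equal to $\epsilon_T\, q^{\,a_T(i,j)}$ times the weight of the corresponding vertex type appearing in the defining lattice \eqref{weights:defining-H} of $\rh_{l/w}$, where $\epsilon_T\in\{\pm1\}$ and $a_T(i,j)$ is an integer depending only on the vertex type and its row and column index, and in particular not on the $x_i$. (The two frozen vertices have $\epsilon_T=1$ and $a_T=0$.) Taking the product of these local factors over all vertices of any fixed configuration, I would show that it collapses to the configuration-independent scalar $(-1)^n q^{\sum_{i=1}^N (i-1)(l_i-w_i)}$, which is the reciprocal of the prefactor multiplying $\widetilde{\rh}_{l/w}$ in \eqref{eq:relationshipbetweenfandtildef}; summing over configurations then gives the lemma.

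The main obstacle is precisely this last step: showing that $\prod_{\mathrm{vertices}}\epsilon_T\, q^{a_T(i,j)}$ is the same for every valid configuration, and computing it. I expect a path decomposition to be the cleanest approach — a configuration is a family of up-right lattice paths, and the $n$ of them entering from the left boundary are in bijection with the $n$ excess particles of $l$ over $w$; I would show that each such path contributes a single $-1$ (giving the sign $(-1)^n$) and a factor $q$ to a power equal to its total vertical displacement, and that these displacements sum, by particle conservation, to $\sum_i (i-1)(l_i-w_i)$, while the paths entering from the bottom boundary contribute trivially. Should the path bookkeeping become cumbersome, an alternative is to verify that $\prod_{\mathrm{vertices}}\epsilon_T\, q^{a_T(i,j)}$ is unchanged under the elementary plaquette moves that connect any two configurations with the same boundary data, and then to evaluate it on a single convenient configuration.
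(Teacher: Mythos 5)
Your proposal follows essentially the same route as the paper: the paper likewise identifies the two lattices geometrically (it complements the horizontal edges and rotates by $90^{\circ}$ where you reflect across a diagonal), compares the six vertex weights up to local factors of $\pm 1$ and powers of $q$, and then uses exactly your path-decomposition argument — each of the $n$ left-entering paths carries one unpaired sign and a power of $q$ counting the columns it traverses — to show the product of local discrepancies is the configuration-independent scalar $(-1)^{n}q^{\sum_i (i-1)(l_i-w_i)}$. The only caveat is bookkeeping: the exponent is governed by the horizontal (column-index) displacement of each path in your orientation rather than the vertical one, but this does not affect the validity of the approach.
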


\begin{proof}
To compare $\rh$ and $\widetilde{\rh}$, it is convenient to give an alternate presentation for $\widetilde{\rh}$ by complementing the particles on horizontal edges.
\begin{equation}
\widetilde{\rh}_{l/w}(x_1,\dots,x_n;z_1,\dots,z_{N})=   \begin{tikzpicture}[scale=0.6,baseline=(current bounding box.center)]
\draw (0:0) coordinate (A)--++(90:4) coordinate (B)--++(0:4) coordinate (C)-- ++(-90:4) coordinate (D)--cycle;
\path (0:0)++(-90:0.5)++(0:2.5) node {$\xleftarrow[{w}]{}$};
\path (0:0)++(90:4.5)++(0:2.5) node {$\xleftarrow[{l}]{}$};
\foreach\x in {1,2,3,4}{
\draw[gray] (0,0)++(0:\x)--++(90:4);
\draw[gray] (0,0)++(90:\x)--++(0:4);};
\foreach\x in {1,2,3,4}{
\draw[fill=white] (0,0)++(90:\x-0.5) circle (0.1);
\draw[fill=red] (0,0)++(0:4)++(90:\x-0.5) circle (0.1);};
\end{tikzpicture} 
\end{equation}
with the following vertices:

\begin{equation}
\label{weight:6v-regionC}
\begin{tabular}{cccccc}
     \begin{tikzpicture}[baseline={(0,0)},scale=0.8]
      \draw (0:0)--++(90:1)--++(0:1)--++(-90:1)--++(180:1);
      \draw[fill=white] (0:0)++(90:0.5) circle (0.1);
      \draw[fill=white] (0,0)++(0:0.5) circle (0.1);
      \draw[fill=white] (0,0)++(0:1)++(90:0.5) circle (0.1);
      \draw[fill=white] (0,0)++(0:0.5)++(90:1) circle (0.1);
     \end{tikzpicture}&\qquad
     \begin{tikzpicture}[baseline={(0,0)},scale=0.8]
     \draw (0:0)--++(90:1)--++(0:1)--++(-90:1)--++(180:1);
      \draw[fill=red] (0:0)++(90:0.5) circle (0.1);
      \draw[fill=red] (0,0)++(0:0.5) circle (0.1);
      \draw[fill=red] (0,0)++(0:1)++(90:0.5) circle (0.1);
      \draw[fill=red] (0,0)++(0:0.5)++(90:1) circle (0.1);
      \draw[red,thick] (0:0.5)--++(135:0.6);
      \draw[red,thick] (0:0)++(90:1)++(0:0.5)--++(-45:0.7);
     \end{tikzpicture}&
     \begin{tikzpicture}[baseline={(0,0)},scale=0.8]
      \draw (0:0)--++(90:1)--++(0:1)--++(-90:1)--++(180:1);
      \draw[fill=white] (0:0)++(90:0.5) circle (0.1);
      \draw[fill=red] (0,0)++(0:0.5) circle (0.1);
      \draw[fill=white] (0,0)++(0:1)++(90:0.5) circle (0.1);
      \draw[fill=red] (0,0)++(0:0.5)++(90:1) circle (0.1);
      \draw[red,thick] (0:0.5)--++(90:1);
     \end{tikzpicture}&
     \begin{tikzpicture}[baseline={(0,0)},scale=0.8]
      \draw (0:0)--++(90:1)--++(0:1)--++(-90:1)--++(180:1);
      \draw[fill=red] (0:0)++(90:0.5) circle (0.1);
      \draw[fill=white] (0,0)++(0:0.5) circle (0.1);
      \draw[fill=red] (0,0)++(0:1)++(90:0.5) circle (0.1);
      \draw[fill=white] (0,0)++(0:0.5)++(90:1) circle (0.1);
      \draw[red,thick] (90:0.5)--++(0:1);
     \end{tikzpicture}&
     \begin{tikzpicture}[baseline={(0,0)},scale=0.8]
     \draw (0:0)--++(90:1)--++(0:1)--++(-90:1)--++(180:1);
      \draw[fill=white] (0:0)++(90:0.5) circle (0.1);
      \draw[fill=white] (0,0)++(0:0.5) circle (0.1);
      \draw[fill=red] (0,0)++(0:1)++(90:0.5) circle (0.1);
      \draw[fill=red] (0,0)++(0:0.5)++(90:1) circle (0.1);
      \draw[red,thick] (0:0)++(0:1)++(90:0.5)--++(135:0.8);
     \end{tikzpicture}&
     \begin{tikzpicture}[baseline={(0,0)},scale=0.8]
      \draw (0:0)--++(90:1)--++(0:1)--++(-90:1)--++(180:1);
      \draw[fill=red] (0:0)++(90:0.5) circle (0.1);
      \draw[fill=red] (0,0)++(0:0.5) circle (0.1);
      \draw[fill=white] (0,0)++(0:1)++(90:0.5) circle (0.1);
      \draw[fill=white] (0,0)++(0:0.5)++(90:1) circle (0.1);
      \draw[red,thick] (0:0.5)--++(135:0.7);
     \end{tikzpicture}\\[2em]
     \qquad $1$ \qquad&\qquad $q$\qquad &\qquad $\dfrac{1-q z x^{-1}}{1-z x^{-1}}$  \qquad&\qquad $\dfrac{1-q z x^{-1}}{1-z x^{-1}}$  \qquad&\qquad $\dfrac{(1-q)zx^{-1}}{1-zx^{-1}}$ \qquad &\qquad $\dfrac{1-q}{1-zx^{-1}}$\qquad\\ 
\end{tabular}
\end{equation}

We apply two operation to the weights in~\eqref{weight:6v-regionC}:
\begin{itemize}
    \item Multiplication by $q^{-1}$ for each vertex whose left edge is occupied by a red particle.  
    Consider a red particle entering from the right and exiting through the $i^\text{th}$ column from the right. This path necessarily passes through $i-1$ vertices where the red particle appears on the left edge. A similar argument applies to red particles entering from the bottom. Hence, this modification contributes a factor of  
    \[
    q^{-\sum_{i=1}^N (i-1)(l_i - w_i)}
    \]  
    to $\widetilde{H}_{l/w}$.

    \item Multiply a negative sign to the last two vertices. 
    For each red particle, the last two tile types occur in pairs in each column, except in the column through which the particle exits. Thus, multiply by $(-1)$ to these vertex weights is equal to multiplying $(-1)^n$ to $\widetilde{H}_{l/w}$, where $n$ is the number of red particles.

    \item 
    And then rotate them by $90^{\circ}$ anti-clockwise. For convenience, we reproduce the vertices after applying these operations.
\end{itemize}

\begin{equation}
\label{weight:6v-regionC-rotated}
\begin{tabular}{cccccc}
     \begin{tikzpicture}[baseline={(0,0)},scale=0.8]
      \draw (0:0)--++(90:1)--++(0:1)--++(-90:1)--++(180:1);
      \draw[fill=white] (0:0)++(90:0.5) circle (0.1);
      \draw[fill=white] (0,0)++(0:0.5) circle (0.1);
      \draw[fill=white] (0,0)++(0:1)++(90:0.5) circle (0.1);
      \draw[fill=white] (0,0)++(0:0.5)++(90:1) circle (0.1);
     \end{tikzpicture}&\qquad
     \begin{tikzpicture}[rotate=90,baseline={(0,0)},scale=0.8]
     \draw (0:0)--++(90:1)--++(0:1)--++(-90:1)--++(180:1);
      \draw[fill=red] (0:0)++(90:0.5) circle (0.1);
      \draw[fill=red] (0,0)++(0:0.5) circle (0.1);
      \draw[fill=red] (0,0)++(0:1)++(90:0.5) circle (0.1);
      \draw[fill=red] (0,0)++(0:0.5)++(90:1) circle (0.1);
      \draw[red,thick] (0:0.5)--++(135:0.6);
      \draw[red,thick] (0:0)++(90:1)++(0:0.5)--++(-45:0.7);
     \end{tikzpicture}&
     \begin{tikzpicture}[rotate=90,baseline={(0,0)},scale=0.8]
      \draw (0:0)--++(90:1)--++(0:1)--++(-90:1)--++(180:1);
      \draw[fill=white] (0:0)++(90:0.5) circle (0.1);
      \draw[fill=red] (0,0)++(0:0.5) circle (0.1);
      \draw[fill=white] (0,0)++(0:1)++(90:0.5) circle (0.1);
      \draw[fill=red] (0,0)++(0:0.5)++(90:1) circle (0.1);
      \draw[red,thick] (0:0.5)--++(90:1);
     \end{tikzpicture}&
     \begin{tikzpicture}[rotate=90,baseline={(0,0)},scale=0.8]
      \draw (0:0)--++(90:1)--++(0:1)--++(-90:1)--++(180:1);
      \draw[fill=red] (0:0)++(90:0.5) circle (0.1);
      \draw[fill=white] (0,0)++(0:0.5) circle (0.1);
      \draw[fill=red] (0,0)++(0:1)++(90:0.5) circle (0.1);
      \draw[fill=white] (0,0)++(0:0.5)++(90:1) circle (0.1);
      \draw[red,thick] (90:0.5)--++(0:1);
     \end{tikzpicture}&
     \begin{tikzpicture}[rotate=90,baseline={(0,0)},scale=0.8]
     \draw (0:0)--++(90:1)--++(0:1)--++(-90:1)--++(180:1);
      \draw[fill=white] (0:0)++(90:0.5) circle (0.1);
      \draw[fill=white] (0,0)++(0:0.5) circle (0.1);
      \draw[fill=red] (0,0)++(0:1)++(90:0.5) circle (0.1);
      \draw[fill=red] (0,0)++(0:0.5)++(90:1) circle (0.1);
      \draw[red,thick] (0:0)++(0:1)++(90:0.5)--++(135:0.8);
     \end{tikzpicture}&
     \begin{tikzpicture}[rotate=90,baseline={(0,0)},scale=0.8]
      \draw (0:0)--++(90:1)--++(0:1)--++(-90:1)--++(180:1);
      \draw[fill=red] (0:0)++(90:0.5) circle (0.1);
      \draw[fill=red] (0,0)++(0:0.5) circle (0.1);
      \draw[fill=white] (0,0)++(0:1)++(90:0.5) circle (0.1);
      \draw[fill=white] (0,0)++(0:0.5)++(90:1) circle (0.1);
      \draw[red,thick] (0:0.5)--++(135:0.7);
     \end{tikzpicture}\\[2em]
     \qquad $1$ &\qquad $1$ &\qquad $\dfrac{q(1-q^{-1} x z^{-1})}{1-x z^{-1}}$  &\qquad $\dfrac{(1-q^{-1} x z^{-1})}{1-x z^{-1}}$  &\qquad $\dfrac{(1-q)}{1-x z^{-1}}$  &\qquad $\dfrac{(1-q)xz^{-1}q^{-1}}{1-xz^{-1}}$ 
\end{tabular}
\end{equation}
 These weights are identical to the weights ~\eqref{weights:6v-regionA-complemented} (after specialisation $z_{i}=q^{-1}$) used in evaluating $\mathcal{Z}_{\bl{A}}$. We then deduce that:
\begin{equation}
 \label{eq:relationshipbetweenfandtildef_2}
 \rh_{l/w}(x_1,\dots,x_n;z_1,\dots,z_{N})=(-1)^{n}q^{-\sum^{N}\limits_{i=1}(i-1)(l_{i}-w_{i})}\widetilde{\rh}_{l/w}(x_1,\dots,x_n;z_1,\dots,z_{N}).
\end{equation}
\end{proof}

Finally, we get:
\begin{equation}
\label{weightofC}
 \cz_{\bl{C}}=\left({(-1)^{n} q^{\sum^{N}\limits_{i=1}(i-1)(l_{i}-w_{i})}}\right)\left(\dfrac{(q;q)_{n}}{\prod^{n}_{i=1}(1-z_{0} x^{-1}_{i})}\right) \rh_{l/w}(x_1,\dots,x_n;z_1,\dots,z_{N})
\end{equation}

\subsection{Region \texorpdfstring{$\bl{D}$}{D}}
\label{sub:spin1regD}

Observe that $n$ red particles enter $\bl{D}$ through its first column. These red particles cannot enter $\bl{F}$ since they would not be able to exit. Therefore, all red particles must travel into $\bl{E}$, ensuring that the right boundary of $\bl{D}$ consists solely of a binary string of white and blue particles.

\

From the left boundary of $\bl{D}$, $n$ blue particles enter. These particles can move into either $\bl{F}$ or $\bl{E}$, leading to many non-trivial configurations. However, we take the limit as $P \to \infty$ which requires certain convergence constraints. We argue that in this limit, all the blue particles entering $D$ from the left travel across into $\bl{F}$ and there by freezing $\bl{D}$.

\

\begin{tabular}{c c c}
&$
\begin{tikzpicture}[scale=0.7,baseline=
(current bounding box.center)]
\draw (0:0)--++(180:1)--++(90:1)--++(0:1)--++(-90:1); 
\draw[fill=cyan] (0,0)++(180:1)++(90:0.5) circle (0.1);
\draw[fill=white] (0,0)++(180:0.5) circle (0.1);
\draw[fill=cyan] (0,0)++(90:0.5) circle (0.1);
\draw[fill=white] (0,0)++(90:1)++(180:0.5) circle (0.1);
\draw[cyan,thick] (0:0)++(90:0.5)--++(180:1);
\end{tikzpicture}
=\left| \dfrac{1-x y^{-1}q^{-1}}{1-xy^{-1}} \right|<1
$
& \\[1em]
$
\begin{tikzpicture}[scale=0.7,baseline=(current bounding box.center)]
\draw (0:0)--++(180:1)--++(90:1)--++(0:1)--++(-90:1); 
\draw[fill=cyan] (0,0)++(180:1)++(90:0.5) circle (0.1);
\draw[fill=cyan] (0,0)++(180:0.5) circle (0.1);
\draw[fill=cyan] (0,0)++(90:0.5) circle (0.1);
\draw[fill=cyan] (0:0)++(90:1)++(180:0.5) circle (0.1);
\draw[cyan,thick] (0:0)++(90:0.5)--++(-135:0.7);
\draw[cyan,thick] (0:0)++(90:0.5)++(180:1)--++(45:0.7);
\end{tikzpicture}=\left| \dfrac{1-z_j y^{-1}_i}{1-q^{-1}z_j y_i} \right|<1 
$
& &
$
 \begin{tikzpicture}[scale=0.7,baseline=(current bounding box.center)]
\draw[gray] (0,0) rectangle (1,1); 
\draw[cyan,->,rounded corners] (0,0.5)--(0.4,0.5)--(0.4,1);
\draw[cyan,->,rounded corners] (0.5,0)--(0.5,1);
\draw[cyan,->,rounded corners] (0.6,0)--(0.6,0.5)--(1,0.5);
\draw[fill=cyan] (0,0)++(0:1)++(90:0.5) circle (0.1);
\draw[fill=cyan] (0,0)++(90:0.5) circle (0.1);
\end{tikzpicture}= \left| \dfrac{1-q^{\mathtt{M}-1} z_{0}y_{i}^{-1}}{1-q^{-1}z_{0}y^{-1}_{i}} \right|<1 
$
\end{tabular}

\

Suppose a white particle is located on the bottom boundary of $\bl{F}$. Given the boundary conditions of $\bl{F}$, this particle must eventually exit through the top boundary.

The maximum number $\begin{tikzpicture}[scale=0.5,baseline=
(current bounding box.center)]
\draw (0:0)--++(180:1)--++(90:1)--++(0:1)--++(-90:1); 
\draw[fill=cyan] (0,0)++(180:1)++(90:0.5) circle (0.1);
\draw[fill=white] (0,0)++(180:0.5) circle (0.1);
\draw[fill=cyan] (0,0)++(90:0.5) circle (0.1);
\draw[fill=white] (0,0)++(90:1)++(180:0.5) circle (0.1);
\draw[cyan,thick] (0:0)++(90:0.5)--++(180:1);
\end{tikzpicture}$ of vertices occurs when the particle moves vertically upward without any diagonal steps. In contrast, in the worst case, the particle may make up to 
$n$ rightward diagonal moves before proceeding vertically. Therefore, we conclude that there will be at least $P - n$ vertices of type $\begin{tikzpicture}[scale=0.5,baseline=
(current bounding box.center)]
\draw (0:0)--++(180:1)--++(90:1)--++(0:1)--++(-90:1); 
\draw[fill=cyan] (0,0)++(180:1)++(90:0.5) circle (0.1);
\draw[fill=white] (0,0)++(180:0.5) circle (0.1);
\draw[fill=cyan] (0,0)++(90:0.5) circle (0.1);
\draw[fill=white] (0,0)++(90:1)++(180:0.5) circle (0.1);
\draw[cyan,thick] (0:0)++(90:0.5)--++(180:1);
\end{tikzpicture}$ in $\bl{F}$.

\

Having a white particle on the bottom boundary of $\bl{F}$ implies that a blue particle from $\bl{D}$ has entered $\bl{E}$. Using a similar argument as above, we can conclude that there will be at least $(P - N)$ tiles of types \begin{tikzpicture}[scale=0.5,baseline=(current bounding box.center)] \draw[gray] (0,0) rectangle (1,1); 
particles from the left \draw[cyan,->,rounded corners] (0,0.5)--(0.4,0.5)--(0.4,1); \draw[cyan,->,rounded corners] (0.5,0)--(0.5,1); \draw[cyan,->,rounded corners] (0.6,0)--(0.6,0.5)--(1,0.5); 
\draw[fill=cyan] (0,0)++(0:1)++(90:0.5) circle (0.1); 
\draw[fill=cyan] (0,0)++(90:0.5) circle (0.1); \end{tikzpicture} 
and \begin{tikzpicture}[scale=0.5,baseline=(current bounding box.center)] \draw (0:0)--++(180:1)--++(90:1)--++(0:1)--++(-90:1);
\draw[fill=cyan] (0,0)++(180:1)++(90:0.5) circle (0.1);
\draw[fill=cyan] (0,0)++(180:0.5) circle (0.1); 
\draw[fill=cyan] (0,0)++(90:0.5) circle (0.1); 
\draw[fill=cyan] (0:0)++(90:1)++(180:0.5) circle (0.1); \draw[cyan,thick] (0:0)++(90:0.5)--++(-135:0.7); \draw[cyan,thick] (0:0)++(90:0.5)++(180:1)--++(45:0.7); \end{tikzpicture} within $\bl{E}$. The worst case scenario is illustrated in the figure below.

\

\begin{equation*}
 \begin{tikzpicture}[scale=0.6,baseline=(current bounding box.center)]
\draw (0:0)--++(60:8);
\draw (0:0)++(0:7)--++(60:8);
\draw[fill= lightgray,dotted] (0:0)--++(0:1)--++(60:8)--++(180:1)--++(-120:8);
\draw[dotted] (0:0)++(0:1)--++(60:8);
\draw[dotted] (0:0)++(0:2)--++(60:8);
\draw[dotted] (0:0)++(0:3)--++(60:8);
\draw[dotted] (0:0)++(0:4)--++(60:8);
\draw[dotted] (0:0)++(0:5)--++(60:8);
\draw[dotted] (0:0)++(0:6)--++(60:8);
\foreach \x in {0,1,2,3,4,5,6,7,8}{
\draw[dotted] (60:\x)--++(0:7);};

\foreach \x in {0,1,2,3,4,5,6,7}{
\draw[fill=cyan] (0:0)++(60:\x+0.5) circle (0.1);
}
\draw[cyan,thick] (0:0)++(60:0.5)--++(0:4)--++(30:1.7);
\draw[cyan,thick] (0:0)++(60:1.5)--++(0:3)--++(30:1.7);
\draw[cyan,thick] (0:0)++(60:2.5)--++(0:2)--++(30:1.7);
\draw[cyan,thick] (0:0)++(60:3.5)--++(0:1)--++(30:1.7);
\draw[cyan,thick] (0:0)++(60:4.5)--++(30:1.7);
\draw[cyan,thick] (0:0)++(60:5.5)--++(30:1.7);
\draw[cyan,thick] (0:0)++(60:6.5)--++(30:1.7);
\draw[cyan,thick] (0:0)++(60:7.5)--++(30:0.85);
\draw[cyan,thick] (0:0)++(0:5.5)++(-120:0.5)--++(60:0.5)--++(30:2.6);
\end{tikzpicture}   
\end{equation*}

\

By combining these arguments, we deduce that as $P \to \infty$, convergence constraints imply that the weight of configurations where a blue particle from $\bl{D}$ enters $ \bl{E}$ vanishes. Consequently, only those configurations in which the right boundary of $\bl{E}$ forms an infinitely long binary string ending in blue particles survive in the limit.

\begin{equation*}
 \begin{tikzpicture}[scale=0.6,baseline=(current bounding box.center)]
\draw (0:0)--++(120:4) coordinate (A)--++(0:5)  coordinate (B)--++(-60:4)  coordinate (C)--++(180:5);
\draw[fill= lightgray] (0:0)--++(0:1)--++(120:4)--++(180:1)--++(-60:4);
\draw[fill= lightgray] (0:0)++(120:4)--++(60:2)--++(0:1)--++(-120:2)--++(180:1);
\draw[dotted] (A)--++(60:2);
\draw[dotted] (B)--++(60:2);
\draw[dotted] (C)--++(60:2);
\foreach \x in {1,2,3,4}{
\draw[gray] (120:\x)--++(0:5);
\draw[dotted] (120:\x)++(0:5)--++(60:2);
\draw[fill=cyan] (0,0)++(120:\x-0.5) circle (0.1);
};
\foreach \x in {1,2,3,4,5}{
\draw[lightgray] (0:\x)--++(120:4);
\draw[dotted] (0:\x)++(120:4)--++(60:2);
};
\draw[fill=cyan] (0,0)++(120:4)++(60:0.5) circle (0.1);
\draw[fill=cyan] (0,0)++(120:4)++(60:1.5) circle (0.1);
\draw[fill=cyan] (0,0)++(0:5)++(60:0.5) circle (0.1);
\draw[fill=cyan] (0,0)++(0:5)++(60:1.5) circle (0.1);
\draw[cyan,thick] (0:0)++(120:4)++(60:0.5)--++(0:6.5);
\draw[cyan,thick] (0:0)++(120:4)++(60:1.5)--++(0:5.5);
\draw[cyan,thick] (0:0)++(120:3.5)--++(0:7);
\draw[cyan,thick] (0:0)++(120:2.5)--++(0:7);
\draw[cyan,thick] (0:0)++(120:1.5)--++(0:6.5);
\draw[cyan,thick] (0:0)++(120:0.5)--++(0:5.5);
\draw[dotted] (0:0)++(120:4)++(60:1)--++(0:5)--++(-60:4);
\draw[cyan,dotted] (0:0)++(120:2.5)++(0:2)++(60:2)++(60:1.5)--++(60:0.5);
\end{tikzpicture}   
\end{equation*}

\subsection{Region \texorpdfstring{$F$}{F}} Since all the blue particles from $\bl{D}$ enter $\bl{F}$, the bottom boundary (or south-west boundary of $\bl{F}$ in ~\eqref{yangbaxterlatticemodel-sixvertex}) is fixed. This forces the left boundary to be a binary string of blue particles and holes with exactly $n$ holes. For a fixed binary string $k$, we have:

\begin{equation}
 \cz_{\bl{F}}=
\begin{tikzpicture}[scale=0.6,baseline=(current bounding box.center)]
\draw (0:0)--++(180:4)--++(90:6)--++(0:4)--++(-90:6);
\foreach\x in {1,2,3,4,5,6}{
\draw[gray] (0:0)++(90:\x)--++(180:4);
\draw[fill=cyan] (0:0)++(90:\x-0.5) circle (0.1);
};
\foreach\x in {1,2,3,4}{
\draw[gray] (0:0)++(180:\x)--++(90:6);
 };
 \foreach\x in {1,2,3,4}{
 \draw[fill=cyan] (0:0)++(180:\x-0.5) circle (0.1);
 \draw[fill=white] (0:0)++(90:6)++(180:\x-0.5) circle (0.1);
 };
 \path (0:0)++(180:4.5)++(90:2.5) node[rotate=90] {$\xleftarrow[k]{}$};
\draw[->](0:0)++(180:2)++(-90:1.5)--++(90:1);
\draw[->](0:0)++(180:6.5)++(90:2.5)--++(0:1);
\end{tikzpicture}
\end{equation}

By applying similar reasoning/discussion as in~\eqref{partitionfucntion: regionA}, we conclude that:
\begin{equation}
   \label{weights:regionF}
   \cz_{\bl{F}}=\rh_{\overline{k}}(x_1,\dots,x_n;y_1,\dots,y_P).
\end{equation}

\subsection{Final equation}
Let $\mathbf{x}=(x_1,\dots,x_n)$, $\mathbf{y}=(y_1,y_{2},\dots)$ and $\mathbf{z}=(z_1,\dots,z_N)$. Putting all together, the original equation ~\eqref{yangbaxterlatticemodel-sixvertex} gives us the following:
\begin{align}
\label{productidentity:nospecialisations}
\displaystyle
 \left( (-1)^{n} q^{\sum\limits^{N}_{i=1}(i-1)(l_{i}-w_{i})}\right)\left(\dfrac{(q;q)_{n}}{\prod^{n}_{i=1}(1- z_{0}x^{-1}_i)}\right){\rh}_{l/w}(\mathbf{x};\mathbf{z}){\rh}_{\overline{m}}(\mathbf{x};\mathbf{y} )=\sum_{k}\mathcal{Z}^{k,w}_{l,m}(\mathbf{y};\mathbf{z})\rh_{\overline{k}}(\mathbf{x};\mathbf{y})
\end{align}
where 
\begin{equation}
\label{puzzles: wave function with no specialisation}
 \mathcal{Z}^{k,w}_{l,m}(\mathbf{y};\mathbf{z}) =
 \begin{tikzpicture}[scale=0.6,baseline=(current bounding box.center)]
 \draw[white,fill=lightgray] (0:0)--++(0:1)--++(90:6)--++(180:1)--++(-90:6);
 \draw (0,0)--++(0:5)--++(90:6)--++(180:5)--++(-90:6);
  \foreach\x in {1,2,3,4,5}{
 \draw[gray] (0:\x)--++(90:6);
  \draw[gray] (90:\x)--++(0:5);
 };
  \foreach \x in {1,2,3,4,5,6}{
 \draw[->] (0:0)++(180:2)++(90:\x-0.5)--++(0:0.5);
 };
 \foreach \x in {1,2,3,4,5}{
 \draw[->] (0:0)++(-90:1.5)++(0:\x-0.5)--++(90:0.5);
 };
 \path (0:0)++(180:3)++(90:0.5) node {$\ss (qy_{P},1)$};
 \path (0:0)++(180:3)++(90:2) node[rotate=90] {$\dots$};
 \path (0:0)++(180:3)++(90:3.5) node {$\ss (qy_{3},2)$};
 \path (0:0)++(180:3)++(90:4.5) node {$\ss (qy_{2},1)$};
  \path (0:0)++(180:3)++(90:5.5) node {$\ss (qy_{1},1)$};
  \path (0:0)++(-90:2)++(0:0.2) node {$\ss (z_{0},\mathtt{M})$};
  \path (0:0)++(-90:2)++(0:1.6) node {$\ss (z_{N},1)$};
  \path (0:0)++(-90:2)++(0:3) node {$\dots$};
  \path (0:0)++(-90:2)++(0:4.6) node {$\ss (z_{1},1)$};
 \path (0:0)++(180:0.5)++(90:0.5) node[cyan] {$\ss m_{P}$};
 \path (0:0)++(180:0.5)++(90:2) node[rotate=90] {$\dots$};
 \path (0:0)++(180:0.5)++(90:3.5) node[cyan] {$\ss m_{3}$};
 \path (0:0)++(180:0.5)++(90:4.5) node[cyan] {$\ss m_{2}$};
  \path (0:0)++(180:0.5)++(90:5.5) node[cyan] {$\ss m_1$};
 \path (0:0)++(0:5.5)++(90:0.5) node[cyan] {$\ss k_{P}$};
 \path (0:0)++(0:5.5)++(90:2) node[rotate=90] {$\dots$};
 \path (0:0)++(0:5.5)++(90:3.5) node[cyan] {$\ss k_{3}$};
 \path (0:0)++(0:5.5)++(90:4.5) node[cyan] {$\ss k_{2}$};
  \path (0:0)++(0:5.5)++(90:5.5) node[cyan] {$\ss k_1$};
  \path (0:0)++(-90:0.5)++(0:0.5) node {$\ss \textcolor{red}{{n}}$};
  \path (0:0)++(-90:0.5)++(0:1.6) node[red] {$\ss w_{N}$};
  \path (0:0)++(-90:0.5)++(0:2.6) node[red] {$\dots$};
  \path (0:0)++(-90:0.5)++(0:3.6) node[red] {$\ss w_2$};
  \path (0:0)++(-90:0.5)++(0:4.6) node[red] {$\ss w_1$};
  \draw[fill=white] (0:0)++(90:6)++(0:0.5) circle (0.1);
  \path (0:0)++(90:6.5)++(0:1.6) node[red] {$ \ss l_{N}$};
  \path (0:0)++(90:6.5)++(0:2.6) node[red] {$\dots$};
  \path (0:0)++(90:6.5)++(0:3.6) node[red] {$\ss l_2$};
  \path (0:0)++(90:6.5)++(0:4.6) node[red] {$\ss l_1$};
 \end{tikzpicture}
\end{equation}

\subsection{At \texorpdfstring{$z_{0}=0$}{}}
To relate ~(\ref{productidentity:nospecialisations}) to Theorem \ref{theorem:puzzlesforwavefunctions} we need to perform certain specialisations. We begin by setting $z_{0}=0$.

 Observe that we have a total of $P$ particles entering the first column, $P-n$ blue particles from the left and $n$ red particles from the bottom. Since the number of particles equals the number of rows and none can exit through the top edge, every particle must exit through a right edge.  
 
 Additionally, the weight of the vertex $\begin{tikzpicture}[baseline=(current bounding box.center),scale=0.6]
    \draw[fill=lightgray] (0:0)--++(0:1)--++(90:1)--++(180:1)--++(-90:1);
    \draw[fill=cyan] (0:0)++(90:0.5) circle (0.1);
    \draw[fill=red] (0:0)++(0:1)++(90:0.5) circle (0.1);
    \path (0:0)++(-90:0.3)++(0:0.5) node {$\ss (a,b)$};
\end{tikzpicture}$, which is equal to $\dfrac{z_{0}(1-q^{a})q^{b}}{y-q^{b}z_{0}}$, vanishes when $z_{0}=0$. This forces all blue particles to move across as shown in the picture below.

\begin{center}
\begin{tikzpicture}[scale=0.6,baseline=(current bounding box.center)]
      \draw[fill=lightgray] (0:0)--++(0:1)--++(90:6)--++(180:1)--++(-90:6);
      \foreach \x in {1,2,3,4,5}{
      \draw[gray] (90:\x)--++(0:1);};
      \draw[fill=white] (0:0)++(90:6)++(0:0.5) circle (0.1);
      \path[fill=white] (0:0)++(0:0.5)++(-90:0.5) node {$\textcolor{red}{n}$};
 \path (0:0)++(180:0.8)++(90:0.5) node[cyan] {$\ss m_{\tp}$};
 \path (0:0)++(180:0.8)++(90:2) node[rotate=90] {$\dots$};
 \path (0:0)++(180:0.8)++(90:3.5) node[cyan] {$\ss m_{3}$};
 \path (0:0)++(180:0.8)++(90:4.5) node[cyan] {$\ss m_{2}$};
  \path (0:0)++(180:0.8)++(90:5.5) node[cyan] {$\ss m_1$};
 \path (0:0)++(0:1.8)++(90:0.5) node {$\ss m'_{\tp}$};
 \path (0:0)++(0:1.8)++(90:2) node[rotate=90] {$\dots$};
 \path (0:0)++(0:1.8)++(90:3.5) node {$\ss m'_{3}$};
 \path (0:0)++(0:1.8)++(90:4.5) node {$\ss m'_{2}$};
  \path (0:0)++(0:1.8)++(90:5.5) node {$\ss m'_1$};
  \draw[red,thick,rounded corners](0:0.6)--++(90:1.5)--++(0:0.5);
  \draw[red,thick,rounded corners](0:0.5)--++(90:4.5)--++(0:0.5);
  \foreach \x in {0,2,3,5}{
  \draw[cyan, thick] (90:\x+0.5)--++(0:1);
  };
  \cbull{0}{0.5}{0.1};\cbull{1}{0.5}{0.1};
  \ebull{0}{1.5}{0.1};\rbull{1}{1.5}{0.1};
  \cbull{0}{2.5}{0.1};\cbull{1}{2.5}{0.1};
  \cbull{0}{3.5}{0.1};\cbull{1}{3.5}{0.1};
  \ebull{0}{4.5}{0.1};\rbull{1}{4.5}{0.1};
  \cbull{0}{5.5}{0.1};\cbull{1}{5.5}{0.1};
\end{tikzpicture}  
\end{center}

 Therefore, the first column in $\mathcal{Z}^{k,w}_{l,m}$ freezes at $z_{0}=0$. Moreover, the weight of this first column in $\mathcal{Z}^{k,w}_{l,m}$ is equal to $(q;q)_{n}$, which cancels out with the same on the left hand side of ~\eqref{productidentity:nospecialisations}.

\subsection{Main theorem} 
Our main Theorem ~\ref{theorem:puzzlesforwavefunctions} is obtained by specialising~\eqref{productidentity:nospecialisations}. We set $y_i= q^{-1}$, $z_{i}=q^{-1}$ for all $i \geq 1$. 

We list the tiles that appear in the puzzle region, along with their corresponding weights:
\begin{equation}
\label{puzzweights:1}
\begin{tabular}{c@{\hskip 1cm}c@{\hskip 1cm}c@{\hskip 1cm}c@{\hskip 1cm}c}
\begin{tikzpicture}[baseline=(current bounding box.center)]
    \draw (0:0)--++(0:1)--++(90:1)--++(180:1)--++(-90:1);
    \draw[fill=red] (0:0)++(90:0.5) circle (0.1);
    \draw[fill=red] (0:0)++(0:0.5) circle (0.1); 
    \draw[fill=red] (0:0)++(0:1)++(90:0.5) circle (0.1);
    \draw[fill=red] (0:0)++(0:0.5)++(90:1) circle (0.1);
    \draw[red] (0,0.5)--(0.5,1);
    \draw[red] (0.5,0)--(1,0.5);
\end{tikzpicture}
&
\begin{tikzpicture}[baseline=(current bounding box.center)]
    \draw (0:0)--++(0:1)--++(90:1)--++(180:1)--++(-90:1);
    \draw[fill=red] (0:0)++(90:0.5) circle (0.1);
    \draw[fill=white] (0:0)++(0:0.5) circle (0.1); 
    \draw[fill=red] (0:0)++(0:1)++(90:0.5) circle (0.1);
    \draw[fill=white] (0:0)++(0:0.5)++(90:1) circle (0.1);
    \draw[red,thick] (0,0.5)--(1,0.5);
\end{tikzpicture}
&
\begin{tikzpicture}[baseline=(current bounding box.center)]
    \draw (0:0)--++(0:1)--++(90:1)--++(180:1)--++(-90:1);
    \draw[fill=white] (0:0)++(90:0.5) circle (0.1);
    \draw[fill=red] (0:0)++(0:0.5) circle (0.1); 
    \draw[fill=white] (0:0)++(0:1)++(90:0.5) circle (0.1);
    \draw[fill=red] (0:0)++(0:0.5)++(90:1) circle (0.1);
    \draw[red,thick] (0.5,0)--(0.5,1);
\end{tikzpicture}
&
\begin{tikzpicture}[baseline=(current bounding box.center)]
    \draw (0:0)--++(0:1)--++(90:1)--++(180:1)--++(-90:1);
    \draw[fill=red] (0:0)++(90:0.5) circle (0.1);
    \draw[fill=white] (0:0)++(0:0.5) circle (0.1); 
    \draw[fill=white] (0:0)++(0:1)++(90:0.5) circle (0.1);
    \draw[fill=red] (0:0)++(0:0.5)++(90:1) circle (0.1);
    \draw[red] (0,0.5)--(0.5,1);
\end{tikzpicture}
&
\begin{tikzpicture}[baseline=(current bounding box.center)]
    \draw (0:0)--++(0:1)--++(90:1)--++(180:1)--++(-90:1);
    \draw[fill=white] (0:0)++(90:0.5) circle (0.1);
    \draw[fill=red] (0:0)++(0:0.5) circle (0.1); 
    \draw[fill=red] (0:0)++(0:1)++(90:0.5) circle (0.1);
    \draw[fill=white] (0:0)++(0:0.5)++(90:1) circle (0.1);
    \draw[red] (0.5,0)--(1,0.5);
\end{tikzpicture}
\\[3 em]
  \begin{tikzpicture}[baseline=(current bounding box.center)]
    \draw (0:0)--++(0:1)--++(90:1)--++(180:1)--++(-90:1);
    \draw[fill=cyan] (0:0)++(90:0.5) circle (0.1);
    \draw[fill=cyan] (0:0)++(0:0.5) circle (0.1); 
    \draw[fill=cyan] (0:0)++(0:1)++(90:0.5) circle (0.1);
    \draw[fill=cyan] (0:0)++(0:0.5)++(90:1) circle (0.1);
    \draw[cyan,thick] (0,0.5)--(0.5,1);
    \draw[cyan,thick] (0.5,0)--(1,0.5);
\end{tikzpicture}
&
\begin{tikzpicture}[baseline=(current bounding box.center)]
    \draw (0:0)--++(0:1)--++(90:1)--++(180:1)--++(-90:1);
    \draw[fill=cyan] (0:0)++(90:0.5) circle (0.1);
    \draw[fill=white] (0:0)++(0:0.5) circle (0.1); 
    \draw[fill=cyan] (0:0)++(0:1)++(90:0.5) circle (0.1);
    \draw[fill=white] (0:0)++(0:0.5)++(90:1) circle (0.1);
    \draw[cyan,thick] (0,0.5)--(1,0.5);
\end{tikzpicture}
&
\begin{tikzpicture}[baseline=(current bounding box.center)]
    \draw (0:0)--++(0:1)--++(90:1)--++(180:1)--++(-90:1);
    \draw[fill=white] (0:0)++(90:0.5) circle (0.1);
    \draw[fill=cyan] (0:0)++(0:0.5) circle (0.1); 
    \draw[fill=white] (0:0)++(0:1)++(90:0.5) circle (0.1);
    \draw[fill=cyan] (0:0)++(0:0.5)++(90:1) circle (0.1);
    \draw[cyan,thick] (0.5,0)--(0.5,1);
\end{tikzpicture}
&
\begin{tikzpicture}[baseline=(current bounding box.center)]
    \draw (0:0)--++(0:1)--++(90:1)--++(180:1)--++(-90:1);
    \draw[fill=cyan] (0:0)++(90:0.5) circle (0.1);
    \draw[fill=white] (0:0)++(0:0.5) circle (0.1); 
    \draw[fill=white] (0:0)++(0:1)++(90:0.5) circle (0.1);
    \draw[fill=cyan] (0:0)++(0:0.5)++(90:1) circle (0.1);
    \draw[cyan,thick] (0,0.5)--(0.5,1);
\end{tikzpicture}
&
\begin{tikzpicture}[baseline=(current bounding box.center)]
    \draw (0:0)--++(0:1)--++(90:1)--++(180:1)--++(-90:1);
    \draw[fill=white] (0:0)++(90:0.5) circle (0.1);
    \draw[fill=cyan] (0:0)++(0:0.5) circle (0.1); 
    \draw[fill=cyan] (0:0)++(0:1)++(90:0.5) circle (0.1);
    \draw[fill=white] (0:0)++(0:0.5)++(90:1) circle (0.1);
    \draw[cyan,thick] (0.5,0)--(1,0.5);
\end{tikzpicture}
\\[3 em] 
  \begin{tikzpicture}[baseline=(current bounding box.center)]
    \draw (0:0)--++(0:1)--++(90:1)--++(180:1)--++(-90:1);
    \draw[fill=white] (0:0)++(90:0.5) circle (0.1);
    \draw[fill=white] (0:0)++(0:0.5) circle (0.1); 
    \draw[fill=white] (0:0)++(0:1)++(90:0.5) circle (0.1);
    \draw[fill=white] (0:0)++(0:0.5)++(90:1) circle (0.1);
\end{tikzpicture}
&
\begin{tikzpicture}[baseline=(current bounding box.center)]
    \draw (0:0)--++(0:1)--++(90:1)--++(180:1)--++(-90:1);
    \draw[fill=cyan] (0:0)++(90:0.5) circle (0.1);
    \draw[fill=red] (0:0)++(0:0.5) circle (0.1); 
    \draw[fill=cyan] (0:0)++(0:1)++(90:0.5) circle (0.1);
    \draw[fill=red] (0:0)++(0:0.5)++(90:1) circle (0.1);
    \draw[cyan,thick] (0,0.5)--(1,0.5);
    \draw[red,thick] (0.5,0)--(0.5,1);
\end{tikzpicture}
&
\begin{tikzpicture}[baseline=(current bounding box.center)]
    \draw (0:0)--++(0:1)--++(90:1)--++(180:1)--++(-90:1);
    \draw[fill=red] (0:0)++(90:0.5) circle (0.1);
    \draw[fill=cyan] (0:0)++(0:0.5) circle (0.1); 
    \draw[fill=red] (0:0)++(0:1)++(90:0.5) circle (0.1);
    \draw[fill=cyan] (0:0)++(0:0.5)++(90:1) circle (0.1);
    \draw[red,thick] (0,0.5)--(1,0.5);
    \draw[cyan,thick] (0.5,0)--(0.5,1);
\end{tikzpicture}
&
\begin{tikzpicture}[baseline=(current bounding box.center)]
    \draw (0:0)--++(0:1)--++(90:1)--++(180:1)--++(-90:1);
    \draw[fill=cyan] (0:0)++(90:0.5) circle (0.1);
    \draw[fill=red] (0:0)++(0:0.5) circle (0.1); 
    \draw[fill=red] (0:0)++(0:1)++(90:0.5) circle (0.1);
    \draw[fill=cyan] (0:0)++(0:0.5)++(90:1) circle (0.1);
    \draw[cyan,thick] (0,0.5)--(0.5,1);
    \draw[red,thick] (0.5,0)--(1,0.5);
\end{tikzpicture}
&
\begin{tikzpicture}[baseline=(current bounding box.center)]
    \draw (0:0)--++(0:1)--++(90:1)--++(180:1)--++(-90:1);
    \draw[fill=red] (0:0)++(90:0.5) circle (0.1);
    \draw[fill=cyan] (0:0)++(0:0.5) circle (0.1); 
    \draw[fill=cyan] (0:0)++(0:1)++(90:0.5) circle (0.1);
    \draw[fill=red] (0:0)++(0:0.5)++(90:1) circle (0.1);
    \draw[red,thick] (0,0.5)--(0.5,1);
    \draw[cyan,thick] (0.5,0)--(1,0.5);
\end{tikzpicture}\\[3 em] 
$0$ & $1$ & $q$ & $-1$ & $-q$\\
\end{tabular}
\end{equation}

\

The tile weights above differ from those in Theorem~\ref{theorem:puzzlesforwavefunctions}. In order to obtain the weights in Theorem ~\ref{theorem:puzzlesforwavefunctions} we need to incorporate the factor $\displaystyle \left( (-1)^{n} q^{-\sum^{N}_{i=0}(i-1)(l_i - w_i)} \right)$ into ~(\ref{puzzweights:1}).

\

\begin{lemma}
\label{lemma:redvertices_in_puzzles}
    We claim that in a puzzle with boundary conditions as in ~\ref{puzzles: wave function with no specialisation} (after removing the first column) the number of vertices with a red particle on the right edge is equal to:
    \[
   \# \begin{tikzpicture}[scale=0.6,baseline= (current bounding box.center)]
        \draw (0:0)--++(0:1)--++(90:1)--++(180:1)--++(-90:1);
        \rbull{1}{0.5}{0.1};
    \end{tikzpicture}+n-Nn=-\sum_{i>0}(i-1)(l_{i}-w_{i})
    \]
\end{lemma}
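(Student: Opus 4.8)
The plan is to realise the red particles of a puzzle as lattice paths and turn the identity into a telescoping count of their rightward steps. First I would go through the tiles in \eqref{puzzweights:1}: among the tiles of nonzero weight, a red particle occupies the right edge in exactly four of them — the tile with a horizontal red segment, the tile with a red segment turning from the bottom edge to the right edge, and the two ``mixed'' tiles in which one of these same red configurations is crossed by a blue segment. (The only other tile with red on its right edge has red on all four edges and weight $0$, so it never occurs.) I would also record the elementary observation that in every admissible tile the red line can only go left$\to$right, bottom$\to$right, bottom$\to$top or left$\to$top, hence is weakly monotone both upward and to the right. Consequently, in any configuration the red particles trace a family of up--right lattice paths, each starting at a red entry on the left or bottom boundary and ending at a red exit on the top boundary.

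Next I would read off the boundary data. After the reservoir column is frozen at $z_0=0$ and removed (as in the previous subsections), the left boundary of the remaining puzzle carries exactly $n$ red particles (one in each row where $m$ has a hole), the bottom boundary carries $\sum_i w_i$ red particles, and the top boundary carries $\sum_i l_i=\sum_i w_i+n$ red particles. Since a red particle moves up and to the right it cannot exit through the left or bottom boundary, and the right boundary of the reduced puzzle carries no red slot; together with conservation of the red particle number this forces every one of the $n+\sum_i w_i$ entering red particles to exit through the top, matching its $\sum_i l_i$ red slots. I also fix the column convention of \eqref{puzzles: wave function with no specialisation}: the column bearing the labels $l_i,w_i$ is the $(N+1-i)$-th column counted from the left.

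The count then telescopes. Along a single monotone red path the number of tiles in which the red particle lies on the right edge equals the number of vertical grid lines the path crosses, namely (physical column of its exit) $-$ (physical column of its start), where the start column is the first column for a left entry and the physical column of the source for a bottom entry. Summing over all red paths, the total number $R$ of tiles with a red particle on the right edge is
\[
R=\sum_{i=1}^{N}(N+1-i)\,l_i-\Bigl(n+\sum_{i=1}^{N}(N+1-i)\,w_i\Bigr)=\sum_{i=1}^{N}(N+1-i)(l_i-w_i)-n ,
\]
the key point being that each of these sums depends only on the boundary, not on the configuration. Since $\sum_i(l_i-w_i)=n$, the right-hand side equals $(N+1)n-\sum_i i\,(l_i-w_i)-n=Nn-\sum_{i>0}(i-1)(l_i-w_i)$, so $R+n-Nn=-\sum_{i>0}(i-1)(l_i-w_i)$, which is the asserted identity.

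I expect the only real friction to be bookkeeping the conventions: getting the reversed column indexing right and checking carefully that in each of the tiles that actually occur the red line never moves horizontally backward, so that ``number of right edges visited along a path'' is genuinely the horizontal displacement of that path. Once those points are pinned down the argument is a one-line telescoping sum, so the obstacle is organisational rather than mathematical.
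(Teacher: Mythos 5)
Your argument is essentially the paper's: both proofs rest on the observation that red paths are weakly monotone up--right, so the number of right-edge occupancies along each path telescopes to its horizontal displacement, a quantity determined by the boundary alone; the paper implements the bookkeeping by extending the puzzle so every red particle enters from the left and then subtracting the extension's contribution $\sum_i (N-i)w_i$, whereas you subtract the start columns directly, and the two computations are identical. The only flaw is an arithmetic slip in your intermediate display: $(N+1)n-\sum_i i(l_i-w_i)-n$ equals $Nn-\sum_i i(l_i-w_i)=(N-1)n-\sum_{i>0}(i-1)(l_i-w_i)$, not $Nn-\sum_{i>0}(i-1)(l_i-w_i)$; however your expression for $R$ before that step and your final identity $R+n-Nn=-\sum_{i>0}(i-1)(l_i-w_i)$ are both correct, so nothing essential is affected.
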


\begin{proof}
 We recall the boundary conditions and notations. We have $n$ red particles entering from the left. The top boundary is labeled $l$ while the bottom boundary is labeled $w$. Additionally, $l$ has exactly $n$ more particles than $w$. Moreover, the weights of the vertices that have same label on all edges is $0$. This implies that two paths of same colors never cross each other. It is convenient to extend these puzzles so that all particles enter from the left, as shown in the picture below:

\[
 \begin{tikzpicture}[scale=0.6,baseline=(current bounding box.center)]
\draw[gray](0.5,0.5) rectangle (5.5,5.5);  
\foreach \x in {1,2,3,4}{
\draw[gray] (0.5,\x+0.5)--(5.5,\x+0.5);
};
\foreach \x in {1,2,3,4}{
\draw[gray] (\x+0.5,0.5)--(\x+0.5,5.5);
};
\rbull{1}{0.5}{0.1};
\rbull{3}{5.5}{0.1};
\rbull{3}{0.5}{0.1};
\rbull{5}{5.5}{0.1};
\rbull{0.5}{4}{0.1};
\rbull{1}{5.5}{0.1};
\draw[red,thick,rounded corners] (1,0.5)--(1,1.5)--(1.5,2)--(2,2.5)--(2,3.5)--(2.5,4)--(3,4.5)--(3,5.5);
\draw[red,thick,rounded corners] (3,0.5)--(3,1.5)--(3.5,2)--(4.5,2)--(5,2.5)--(5,5.5);
\draw[red,thick,rounded corners] (0.5,4)--(1,4.5)--(1,5.5);
\node[red] at (1,6) {$\ss l_{N}$};
\node[red] at (3,6) {$\dots$};
\node[red] at (5,6) {$\ss l_{1}$};
\node[red] at (1,0) {$\ss w_{N}$};
\node[red] at (3,0) {$\dots$};
\node[red] at (5,0) {$\ss w_{1}$};
\end{tikzpicture}
\hspace{1cm}\mapsto \hspace{1cm}
 \begin{tikzpicture}[scale=0.6,baseline=(current bounding box.center)]
\draw[gray](0.5,0.5) rectangle (5.5,5.5);  
\foreach \x in {1,2,3,4}{
\draw[gray] (0.5,\x+0.5)--(5.5,\x+0.5);
};
\foreach \x in {1,2,3,4}{
\draw[gray] (\x+0.5,0.5)--(\x+0.5,5.5);
};
\draw[dotted] (0.5,-1.5) rectangle (5.5,0.5);
\draw[dotted](0.5,-0.5) --(5.5,-0.5);
\draw[dotted](1.5,-1.5) --(1.5,0.5);\draw[dotted](2.5,-1.5) --(2.5,0.5);
\draw[dotted](3.5,-1.5) --(3.5,0.5);\draw[dotted](4.5,-1.5) --(4.5,0.5);
\draw[dotted](5.5,-1.5) --(5.5,0.5);
\rbull{1}{0.5}{0.1};
\rbull{3}{5.5}{0.1};
\rbull{3}{0.5}{0.1};
\rbull{5}{5.5}{0.1};
\rbull{0.5}{4}{0.1};
\rbull{1}{5.5}{0.1};
\draw[red,thick,rounded corners] (0.5,-0)--(1,0.5)--(1,1.5)--(1.5,2)--(2,2.5)--(2,3.5)--(2.5,4)--(3,4.5)--(3,5.5);
\draw[red,thick,rounded corners] (0.5,-1)--(2.5,-1)--(3,-0.5)--(3,0.5)--(3,1.5)--(3.5,2)--(4.5,2)--(5,2.5)--(5,5.5);
\draw[red,thick,rounded corners] (0.5,4)--(1,4.5)--(1,5.5);
\node[red] at (1,6) {$\ss l_{N}$};
\node[red] at (3,6) {$\dots$};
\node[red] at (5,6) {$\ss l_{1}$};
\node[red] at (1,0) {$\ss w_{N}$};
\node[red] at (3,0) {$\dots$};
\node[red] at (5,0) {$\ss w_{1}$};
\end{tikzpicture}
\]
 
Observe that when a red particle coming from the left exits through the $i^{th}$ column from the right, it implies that there are $N-i$ tiles with that red particle on their right edge. 

We can then conclude that the number of \begin{tikzpicture}[scale=0.5,baseline= (current bounding box.center)]
        \draw (0:0)--++(0:1)--++(90:1)--++(180:1)--++(-90:1);
        \rbull{1}{0.5}{0.1};
    \end{tikzpicture} vertices in the extended picture is $\sum_{i>0}(N-i) l_{i}$. 

    To compute the number of vertices \begin{tikzpicture}[scale=0.5,baseline= (current bounding box.center)]
        \draw (0:0)--++(0:1)--++(90:1)--++(180:1)--++(-90:1);
        \rbull{1}{0.5}{0.1};
    \end{tikzpicture} in the non-extended diagram, we need to subtract those coming from the extended part. This is computed as follows:

    \begin{equation*}
   \# \begin{tikzpicture}[scale=0.6,baseline= (current bounding box.center)]
        \draw (0:0)--++(0:1)--++(90:1)--++(180:1)--++(-90:1);
        \rbull{1}{0.5}{0.1};
    \end{tikzpicture}=\sum_{i}(N-i)l_{i}-\sum_{i}(N-i)w_{i}=
    N n-n-\sum_{i}(i-1) (l_{i}-w_{i})
    \end{equation*}
\end{proof}

Using ~\cref{lemma:redvertices_in_puzzles}, we can incorporate the factor  $\displaystyle \left( (-1)^{n} q^{-\sum^{N}_{i=0}(i-1)(l_i - w_i)} \right)$ into the puzzle weights ~(\ref{puzzweights:1}) with the following operations:
\begin{itemize}
    \item [(i)] If the right edge contains a red particle, multiply the weight by $q$.
    \item [(ii)] If the left edge does not contain a blue particle, multiply the weight by $q^{-1}$ which gives an overall factor of $q^{-n(N)}$, and 
    \item [(iii)] If the top edge contains a red particle, multiply the weight by $-q$; if the bottom edge contains a red particle, divide the weight by $-q$ which gives an overall factor of $(-q)^{n}$.
\end{itemize}
Finally, to obtain the tiles in~(\ref{weights:wavefunctions-puzzleweights}), multiply each weight by $(-1)$ whenever there is a blue particle on the top edge, and also whenever there is a blue particle on the bottom edge. Since $n$ blue particles enter from the right and exit through the left, this operation does not alter the partition function of a puzzle. We list the tiles with their weights after performing the above-mentioned operations:

\begin{equation}
\begin{tabular}{c@{\hskip 5mm}@{\hskip 5mm}c@{\hskip 5mm}@{\hskip 5mm}c@{\hskip 5mm}@{\hskip 5mm}c@{\hskip 5mm}@{\hskip 5mm}c@{\hskip 5mm}}
\begin{tikzpicture}[baseline=(current bounding box.center)]
    \draw (0:0)--++(0:1)--++(90:1)--++(180:1)--++(-90:1);
    \draw[fill=red] (0:0)++(90:0.5) circle (0.1);
    \draw[fill=red] (0:0)++(0:0.5) circle (0.1); 
    \draw[fill=red] (0:0)++(0:1)++(90:0.5) circle (0.1);
    \draw[fill=red] (0:0)++(0:0.5)++(90:1) circle (0.1);
    \draw[red,thick] (0,0.5)--(0.5,1);
    \draw[red,thick] (0.5,0)--(1,0.5);
\end{tikzpicture}
&
\begin{tikzpicture}[baseline=(current bounding box.center)]
    \draw (0:0)--++(0:1)--++(90:1)--++(180:1)--++(-90:1);
    \draw[fill=red] (0:0)++(90:0.5) circle (0.1);
    \draw[fill=white] (0:0)++(0:0.5) circle (0.1); 
    \draw[fill=red] (0:0)++(0:1)++(90:0.5) circle (0.1);
    \draw[fill=white] (0:0)++(0:0.5)++(90:1) circle (0.1);
    \draw[red,thick] (0,0.5)--(1,0.5);
\end{tikzpicture}
&
\begin{tikzpicture}[baseline=(current bounding box.center)]
    \draw (0:0)--++(0:1)--++(90:1)--++(180:1)--++(-90:1);
    \draw[fill=white] (0:0)++(90:0.5) circle (0.1);
    \draw[fill=red] (0:0)++(0:0.5) circle (0.1); 
    \draw[fill=white] (0:0)++(0:1)++(90:0.5) circle (0.1);
    \draw[fill=red] (0:0)++(0:0.5)++(90:1) circle (0.1);
    \draw[red,thick] (0.5,0)--(0.5,1);
\end{tikzpicture}
&
\begin{tikzpicture}[baseline=(current bounding box.center)]
    \draw (0:0)--++(0:1)--++(90:1)--++(180:1)--++(-90:1);
    \draw[fill=red] (0:0)++(90:0.5) circle (0.1);
    \draw[fill=white] (0:0)++(0:0.5) circle (0.1); 
    \draw[fill=white] (0:0)++(0:1)++(90:0.5) circle (0.1);
    \draw[fill=red] (0:0)++(0:0.5)++(90:1) circle (0.1);
    \draw[red,thick] (0,0.5)--(0.5,1);
\end{tikzpicture}
&
\begin{tikzpicture}[baseline=(current bounding box.center)]
    \draw (0:0)--++(0:1)--++(90:1)--++(180:1)--++(-90:1);
    \draw[fill=white] (0:0)++(90:0.5) circle (0.1);
    \draw[fill=red] (0:0)++(0:0.5) circle (0.1); 
    \draw[fill=red] (0:0)++(0:1)++(90:0.5) circle (0.1);
    \draw[fill=white] (0:0)++(0:0.5)++(90:1) circle (0.1);
    \draw[red,thick] (0.5,0)--(1,0.5);
\end{tikzpicture}\\[2 em]
$0$ & $1$ & $1$ & $1$ & $1$\\[2 em]
  \begin{tikzpicture}[baseline=(current bounding box.center)]
    \draw (0:0)--++(0:1)--++(90:1)--++(180:1)--++(-90:1);
    \draw[fill=cyan] (0:0)++(90:0.5) circle (0.1);
    \draw[fill=cyan] (0:0)++(0:0.5) circle (0.1); 
    \draw[fill=cyan] (0:0)++(0:1)++(90:0.5) circle (0.1);
    \draw[fill=cyan] (0:0)++(0:0.5)++(90:1) circle (0.1);
    \draw[cyan,thick] (0,0.5)--(0.5,1);
    \draw[cyan,thick] (0.5,0)--(1,0.5);
\end{tikzpicture}
&
\begin{tikzpicture}[baseline=(current bounding box.center)]
    \draw[black] (0:0)--++(0:1)--++(90:1)--++(180:1)--++(-90:1);
    \draw[fill=cyan] (0:0)++(90:0.5) circle (0.1);
    \draw[fill=white] (0:0)++(0:0.5) circle (0.1); 
    \draw[fill=cyan] (0:0)++(0:1)++(90:0.5) circle (0.1);
    \draw[fill=white] (0:0)++(0:0.5)++(90:1) circle (0.1);
    \draw[cyan,thick] (0,0.5)--(1,0.5);
\end{tikzpicture}
&
\begin{tikzpicture}[baseline=(current bounding box.center)]
    \draw (0:0)--++(0:1)--++(90:1)--++(180:1)--++(-90:1);
    \draw[fill=white] (0:0)++(90:0.5) circle (0.1);
    \draw[fill=cyan] (0:0)++(0:0.5) circle (0.1); 
    \draw[fill=white] (0:0)++(0:1)++(90:0.5) circle (0.1);
    \draw[fill=cyan] (0:0)++(0:0.5)++(90:1) circle (0.1);
    \draw[cyan,thick] (0.5,0)--(0.5,1);
\end{tikzpicture}
&
\begin{tikzpicture}[baseline=(current bounding box.center)]
    \draw (0:0)--++(0:1)--++(90:1)--++(180:1)--++(-90:1);
    \draw[fill=cyan] (0:0)++(90:0.5) circle (0.1);
    \draw[fill=white] (0:0)++(0:0.5) circle (0.1); 
    \draw[fill=white] (0:0)++(0:1)++(90:0.5) circle (0.1);
    \draw[fill=cyan] (0:0)++(0:0.5)++(90:1) circle (0.1);
    \draw[cyan,thick] (0,0.5)--(0.5,1);
\end{tikzpicture}
&
\begin{tikzpicture}[baseline=(current bounding box.center)]
    \draw (0:0)--++(0:1)--++(90:1)--++(180:1)--++(-90:1);
    \draw[fill=white] (0:0)++(90:0.5) circle (0.1);
    \draw[fill=cyan] (0:0)++(0:0.5) circle (0.1); 
    \draw[fill=cyan] (0:0)++(0:1)++(90:0.5) circle (0.1);
    \draw[fill=white] (0:0)++(0:0.5)++(90:1) circle (0.1);
    \draw[cyan,thick] (0.5,0)--(1,0.5);
\end{tikzpicture}
\\[2 em] 
$0$ & $1$ & $1$ & $-1$ & $-1$\\[2 em]
  \begin{tikzpicture}[baseline=(current bounding box.center)]
    \draw (0:0)--++(0:1)--++(90:1)--++(180:1)--++(-90:1);
    \draw[fill=white] (0:0)++(90:0.5) circle (0.1);
    \draw[fill=white] (0:0)++(0:0.5) circle (0.1); 
    \draw[fill=white] (0:0)++(0:1)++(90:0.5) circle (0.1);
    \draw[fill=white] (0:0)++(0:0.5)++(90:1) circle (0.1);
\end{tikzpicture}
&
\begin{tikzpicture}[baseline=(current bounding box.center)]
    \draw (0:0)--++(0:1)--++(90:1)--++(180:1)--++(-90:1);
    \draw[fill=cyan] (0:0)++(90:0.5) circle (0.1);
    \draw[fill=red] (0:0)++(0:0.5) circle (0.1); 
    \draw[fill=cyan] (0:0)++(0:1)++(90:0.5) circle (0.1);
    \draw[fill=red] (0:0)++(0:0.5)++(90:1) circle (0.1);
    \draw[cyan,thick] (0,0.5)--(1,0.5);
    \draw[red,thick] (0.5,0)--(0.5,1);
\end{tikzpicture}
&
\begin{tikzpicture}[baseline=(current bounding box.center)]
    \draw (0:0)--++(0:1)--++(90:1)--++(180:1)--++(-90:1);
    \draw[fill=red] (0:0)++(90:0.5) circle (0.1);
    \draw[fill=cyan] (0:0)++(0:0.5) circle (0.1); 
    \draw[fill=red] (0:0)++(0:1)++(90:0.5) circle (0.1);
    \draw[fill=cyan] (0:0)++(0:0.5)++(90:1) circle (0.1);
    \draw[red,thick] (0,0.5)--(1,0.5);
    \draw[cyan,thick] (0.5,0)--(0.5,1);
\end{tikzpicture}
&
\begin{tikzpicture}[baseline=(current bounding box.center)]
    \draw (0:0)--++(0:1)--++(90:1)--++(180:1)--++(-90:1);
    \draw[fill=cyan] (0:0)++(90:0.5) circle (0.1);
    \draw[fill=red] (0:0)++(0:0.5) circle (0.1); 
    \draw[fill=red] (0:0)++(0:1)++(90:0.5) circle (0.1);
    \draw[fill=cyan] (0:0)++(0:0.5)++(90:1) circle (0.1);
    \draw[cyan,thick] (0,0.5)--(0.5,1);
    \draw[red,thick] (0.5,0)--(1,0.5);
\end{tikzpicture}
&
\begin{tikzpicture}[baseline=(current bounding box.center)]
    \draw (0:0)--++(0:1)--++(90:1)--++(180:1)--++(-90:1);
    \draw[fill=red] (0:0)++(90:0.5) circle (0.1);
    \draw[fill=cyan] (0:0)++(0:0.5) circle (0.1); 
    \draw[fill=cyan] (0:0)++(0:1)++(90:0.5) circle (0.1);
    \draw[fill=red] (0:0)++(0:0.5)++(90:1) circle (0.1);
    \draw[red,thick] (0,0.5)--(0.5,1);
    \draw[cyan,thick] (0.5,0)--(1,0.5);
\end{tikzpicture}\\[2 em] 
$0$ & $1$ & $q$ & $1$ & $q$\\
\end{tabular}
\end{equation}

It remains to show that these coefficients are polynomials in $q$ with positive coefficients, upto an overall multiplicative sign.

\begin{lemma}
\label{lem:positivity}
The coefficients $C^{k,w}_{l,m}$ are polynomials in $q$ with positive coefficients, up to an overall multiplicative sign.
\end{lemma}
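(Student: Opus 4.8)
The plan is to read the statement directly off the tile weights. In \eqref{weights:wavefunctions-puzzleweights} every tile weight lies in $\{0,1,q,-1,-q\}$, so a puzzle $\pi$ with the fixed boundary $(l,w,m,k)$ either has weight $0$ or contributes exactly one monomial $\mathrm{wt}(\pi)=(-1)^{N(\pi)}q^{d(\pi)}$, where $d(\pi)\ge 0$ counts the tiles of weight $\pm q$ and $N(\pi)$ counts the tiles of negative weight. Since a $P\times N$ region has finitely many tilings, $C^{k,w}_{l,m}=\sum_\pi\mathrm{wt}(\pi)$ is automatically a polynomial in $q$ with integer coefficients. Hence the entire content of the lemma is the assertion that $N(\pi)\bmod 2$ is one and the same for all puzzles with the given boundary: granting this, $C^{k,w}_{l,m}=\pm\sum_\pi q^{d(\pi)}$, i.e.\ $\pm$ a polynomial with non-negative coefficients.

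The next step is to pin down which tiles are negative. From \eqref{weights:wavefunctions-puzzleweights} these are precisely the two mixed tiles of weights $-1$ and $-q$: a blue strand turning from the left edge to the top edge together with a red strand turning from the bottom edge to the right edge, and the colour-reversed configuration. Call these the \emph{swap tiles}; so $N(\pi)$ is the number of swap tiles of $\pi$, and I must show this number has a parity fixed by the boundary. The examples and the prefactor in Theorem~\ref{theorem:Puzzles-Spin-Hall} suggest the value $|\bm|-|\bk|\bmod 2$, where $|\cdot|=\sum_i i(\cdot)_i$.

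To get a handle on swap tiles I would use that no tile carries two red strands of nonzero weight --- the only tile with two red strands has weight $0$ --- and likewise for blue, so a puzzle is the overlay of a vertex-disjoint family of red monotone lattice paths, with sources and sinks determined by the red entries of $m$, $w$, $l$, on a vertex-disjoint family of blue monotone lattice paths determined by $m$ and $k$, the mixed tiles being exactly the cells shared by a red and a blue path. A one-line local check then shows that a swap tile is exactly a cell at which a red path and a blue path \emph{both} make a corner, the two corners being complementary (a blue $W\!\to\! N$ corner forces any red strand in that cell to be an $S\!\to\! E$ corner, and symmetrically). I would then prove the parity invariance of the swap count by a local-moves argument: exhibit a short explicit list of elementary rearrangements --- pushing a red path across a blue ``bigon'', sliding a corner, etc.\ --- each supported in a bounded window and leaving that window's boundary unchanged, show that any two valid puzzles with the same boundary are joined by a finite sequence of them, and verify case by case that each move changes the number of swap tiles by an even amount. (Equivalently, one can attempt to verify the closed congruence $N(\pi)\equiv|\bm|-|\bk|\pmod 2$ directly by a column-by-column transfer-matrix induction, checking that appending a column preserves it.) Combined with the observation that the global renormalising factor $(-1)^{n}q^{-\sum_i(i-1)(l_i-w_i)}$ produced in Lemma~\ref{lemma:redvertices_in_puzzles} is already a single boundary-fixed scalar, this yields the uniform sign.

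The main obstacle is precisely this parity invariance. The subtlety is that the number of swap tiles shared by a single red--blue pair is \emph{not} an invariant of the two paths' endpoints: at a swap cell the blue arc and the red arc occupy opposite corners and never actually meet, so a red path can be rerouted to avoid a swap with one blue path at the cost of creating one with another. All the real work therefore lies in the global bookkeeping --- whether organised as a complete list of sign-even local moves or as a verified closed form for $N(\pi)\bmod 2$ --- that makes this compensation manifest; everything else is the routine packaging above.
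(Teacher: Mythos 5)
Your reduction of the lemma to the claim that the number of negative tiles has a parity determined solely by the boundary is exactly right, and your diagnosis that the pairwise swap count between one red and one blue path is \emph{not} an endpoint invariant is also correct. But that is precisely the point at which your proposal stops: the ``complete list of sign-even local moves'' (together with a connectivity statement for the move graph) or the ``verified closed form for $N(\pi)\bmod 2$'' is the entire content of the lemma, and neither is supplied. As written, the proposal is a correct reduction plus an accurate description of the obstacle, not a proof.

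The paper closes this gap by changing what is being counted. It works with the intermediate weights \eqref{puzzweights:1}, in which the negative tiles are \emph{all} corner tiles (the $\mathbf{c}$ vertices), not just the mixed ones, and it treats holes as strands of a third colour, so that every cell of the $P\times N$ rectangle is traversed by exactly two strands --- one entering from the left, one from the bottom --- which either both go straight (a $\mathbf{b}$ vertex, a transversal crossing) or both turn (an $\mathbf{a}$ or $\mathbf{c}$ vertex). Since the $\mathbf{a}$ vertices (two strands of the same colour in one cell) have weight zero, in any contributing configuration the number of $\mathbf{c}$ tiles equals $PN$ minus the total number of transversal crossings, and the latter decomposes as a sum over pairs of differently coloured strands of their pairwise crossing numbers. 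Unlike your pairwise swap count, each pairwise \emph{crossing} number is a parity invariant of the two strands' endpoints (two monotone non-self-intersecting curves in a disk cross an odd or even number of times according to whether their endpoints interleave); this is the boundary-determined ``closed congruence'' you were looking for. The discrepancy between the two weight normalisations is absorbed by the boundary-fixed prefactor of Lemma~\ref{lemma:redvertices_in_puzzles}, as you anticipated. To repair your write-up along your own lines you would either have to import this crossing-versus-cornering bookkeeping, or genuinely exhibit and verify the local moves; the former is the shorter route.
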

\begin{proof}
To demonstrate positivity, we work with weights ~\eqref{puzzweights:1}. We argue that, for a fixed boundary all puzzles have the same overall sign. This follows from the fact that the weights of the tiles with identical labels on all edges are $0$ (i.e., tiles in the first column~\eqref{puzzweights:1}). We make the following observation:

For convenience, we draw holes as black particles. We refer to the tiles in the first column of ~(\ref{puzzweights:1}) as $\mathbf{a}$ vertices, those in the second and third columns as $\mathbf{b}$ vertices, and those in the fourth and fifth column as $\mathbf{c}$ vertices. Pick a blue particle entering from the left and a black particle entering from the bottom. These two particles exit from specific positions. Then in all the configurations, the number of $\mathbf{b}$ type vertices involving the strings corresponding to these two particles is always either odd or even, as shown in the picture below. As the weights of the $\mathbf{a}$ vertices are $0$, this implies that for a fixed boundary all the configurations have either an odd number or an even number of $\mathbf{c}$ tiles. And similar argument works when the black particles exit from the right.
\begin{center}
\begin{tabular}{c@{\hskip 1cm}c}
\begin{tikzpicture}[scale=0.6,baseline=(current bounding box.center)]
\draw[gray](0.5,0.5) rectangle (5.5,5.5);  
\foreach \x in {1,2,3,4}{
\draw[gray] (0.5,\x+0.5)--(5.5,\x+0.5);
};
\foreach \x in {1,2,3,4}{
\draw[gray] (\x+0.5,0.5)--(\x+0.5,5.5);
};
\bbull{2}{0.5}{0.1};
\bbull{4}{5.5}{0.1};
\cbull{0.5}{2}{0.1};
\cbull{5.5}{4}{0.1};
\draw[cyan,thick,rounded corners] (0.5,2)--(2.5,4)--(5.5,4);
\draw[black,thick,rounded corners] (2,0.5)--(4,2.5)--(4,5.5);
\end{tikzpicture}
&
\begin{tikzpicture}[scale=0.6,baseline=(current bounding box.center)]
\draw[gray](0.5,0.5) rectangle (5.5,5.5);  
\foreach \x in {1,2,3,4}{
\draw[gray] (0.5,\x+0.5)--(5.5,\x+0.5);
};
\foreach \x in {1,2,3,4}{
\draw[gray] (\x+0.5,0.5)--(\x+0.5,5.5);
};
\bbull{2}{0.5}{0.1};
\bbull{4}{5.5}{0.1};
\cbull{0.5}{2}{0.1};
\cbull{5.5}{4}{0.1};
\draw[cyan,thick,rounded corners] (0.5,2)--(2.5,2)--(3,2.5)--(3,3.5)--(3.5,4)--(5.5,4);
\draw[black,thick,rounded corners] (2,0.5)--(2,2.5)--(2.5,3)--(3.5,3)--(4,3.5)--(4,5.5);
\end{tikzpicture}
\end{tabular}
\end{center}
\end{proof}
\section{Higher spin Puzzles}
\label{sec:proof_spinhall}
We follow the same argument in deriving the puzzles for the spin Hall--Littlewood functions as we did in the previous section. We shall only focus on the modifications and keep our explanations to a minimum to avoid repeating ourselves.

\subsection{Notation}
Recall that positive integers $\tl,\tm$ represent the spin of the lines attached to the variables $y,z$. We use a white bullet to denote $(0,0)$, for a positive integer $k$, we write $\textcolor{red}{k}$ to denote $(k,0)$, and  $\textcolor{cyan}{k}$  to denote $(0,k)$. For this section, we depict all vertices without any shading. 

\subsection{Lattice model} 
\label{latticemodel:higherspin}
Using the weights of Theorem~\ref{theorem:weightsoftheYBE}, in the case $r=2$, we built the following lattice model:
\begin{equation}
\label{eq:YBEhigerspin}
 \begin{tikzpicture}[scale=0.65,baseline=(current bounding box.center)]
\draw (0:0)++(-60:1) coordinate (A)--++(60:2) coordinate (AA)--++(60:4) coordinate (B)--++(0:5) coordinate (C)--++(-60:4) coordinate (D)--++(-120:6) coordinate (E);
\draw (A)--++(-60:4) coordinate (F)--++(0:5);
\draw (F)--++(60:6) coordinate (G)--++(120:4);
\draw (G)--(D);
\draw[gray] (AA)++(60:-1)--++(-60:4)--++(0:5);
\foreach \x in {1,2,3,4}{
\draw[gray] (F)++(0:\x)--++(60:6)--++(120:4);
\draw[gray] (A)++(-60:\x)--++(60:6)--++(0:5);
\draw[gray] (A)++(60:\x)--++(-60:4)--++(0:5);
};
\foreach \x in {1,2,3,4,5}{
\draw[gray] (A)++(60:\x)--++(-60:4)--++(0:5);
};
\foreach \x in {1,2,3,4,5,6}
{
\path (F)++(0:5.3)++(60:\x-0.5) node[cyan] {$\ss \tl$};
};
\foreach \x in {1,2,3,4}
{\draw[fill=cyan] (A)++(-60:\x-0.5) circle (0.1);
\draw[fill=white] (F)++(0:5)++(60:6)++(120:\x-0.5) circle (0.1);};
\path (F)++(0:0.5)++(-90:0.4) node {$\ss \textcolor{red}{n}$};
\draw[fill=white] (A)++(0:-0.5)++(60:6)++(0:1) circle (0.1);
\path (0,0)++(-60:2.5)++(60:3) node {$\fontsize{25pt}{0}{\bl{A}}$};
\path (F)++(0:2.5)++(60:3) node {$\fontsize{25pt}{0}{\bl{B}}$};
\path (F)++(0:2.5)++(60:6)++(120:2.5) node {$\fontsize{25pt}{0}{\bl{C}}$};
\path (F)++(0:2.5)++(60:6)++(120:2.5) node {$\fontsize{25pt}{0}{\bl{C}}$};
\path (F)++(60:6)++(0:4.5)++(120:4.5) node[red] {$\ss l_{1}$};
\path (F)++(60:6)++(0:3)++(120:4.5) node[red] {$\dots$};
\path (F)++(60:6)++(0:1.5)++(120:4.5) node[red] {$\ss l_{N}$};
\draw[fill=white] (F)++(0:1.5) circle (0.1);
\draw[fill=white] (F)++(0:2.5) circle (0.1);
\draw[fill=white] (F)++(0:3.5) circle (0.1);
\draw[fill=white] (F)++(0:4.5) circle (0.1);
\path (A)++(0:-0.7)++(60:6) node[cyan,rotate=-60] {$\ss {\tl-m_{1}}$};
\path (A)++(0:-0.7)++(60:5) node[cyan,rotate=-60] {$\ss {\tl-m_{2}}$};
\path (A)++(0:-0.7)++(60:4) node[cyan,rotate=-60] {$\ss {\tl-m_{3}}$};
\path (A)++(0:-0.7)++(60:2) node[cyan,rotate=60] {$\ldots $};
\path (A)++(0:-0.7)++(60:1) node[cyan,rotate=-60] {$\ss { \tl-m_{P}}$};
\foreach \x in {1,4}{
\draw[->] (A)++(-60:\x-0.5)++(-120:1)--++(60:0.5);};
\path (A)++(-60:1-0.5)++(-120:1.5) node {$\ss (x_1,1) $};
\path (A)++(-60:2)++(-120:1.5) node[rotate=-60] {$ \ldots$};
\path (A)++(-60:4-0.5)++(-120:1.5) node {$\ss (x_{n},1) $};
\path (F)++(0:0.4)++(-90:1.8) node {$\ss (z_0,\mathtt{M}_{0})$};
\draw[->] (F)++(0:0.5)++(-90:1.5)--++(90:0.5);
\draw[->] (F)++(0:1.5)++(-90:1.5)--++(90:0.5);
\path (F)++(0:1.5)++(-90:1.8)++(0:0.2) node {$\ss (z_{N},\tm) $};
\draw[->] (F)++(0:2.5)++(-90:1.5)--++(90:0.5);
\draw[->] (F)++(0:4.7)++(-90:1.5)--++(90:0.5);
\draw[->] (F)++(0:3.5)++(-90:1.5)--++(90:0.5);
\path (F)++(0:3.3)++(-90:1.8) node {$\dots $};
\path (F)++(0:4.7)++(-90:1.8) node {$\ss (z_1,\mathtt{M}) $};
\foreach\x in {1,6}{
\draw[->] (A)++(60:\x-0.5)++(120:2)--++(-60:0.5);};
\path (A)++(60:0.5)++(120:2.5)++(0:-0.4) node {$\ss ( q^{-\tl +1}y^{-1}_{P},\tl)$};
\path (A)++(60:2.5)++(120:2.5)++(90:0.5) node[rotate=60] {$ \dots$};
\path (A)++(60:5.5)++(120:2.5)++(0:-0.4) node {$\ss (q^{-\tl+1}y^{-1}_1,\tl)$};
\end{tikzpicture}=
 \begin{tikzpicture}[scale=0.65,baseline=(current bounding box.center)]
\draw (0:0)++(-60:1) coordinate (A)--++(60:2) coordinate (AA)--++(60:4) coordinate (B)--++(0:5) coordinate (C)--++(-60:4) coordinate (D)--++(-120:4) coordinate (EE)--++(-120:2) coordinate (E);
\draw (A)--++(-60:4) coordinate (F)--++(0:5);
\draw (A)--++(0:5) coordinate (G);
\draw (G)--++(60:6);
\draw (G)--++(-60:4);
\path (F)++(0:0.5)++(-90:0.4) node {$\ss \textcolor{red}{n}$};
\foreach \x in {1,2,3,4}{
\draw[gray] (A)++(-60:\x)--++(0:5)--++(60:6);
};
\foreach \x in {1,2,3,4,5}{
\draw[gray] (A)++(-60:4)++(0:\x)--++(120:4)--++(60:6);
};
\foreach \x in {1,2,3,4,5}{
\draw[gray] (A)++(60:\x)--++(0:5)--++(-60:4);
};
\foreach \x in {1,2,3,4,5,6}
{
\path (F)++(0:5.3)++(60:\x-0.5) node[cyan] {$\ss \tl$};
};
\foreach \x in {1,2,3,4}
{\draw[fill=cyan] (A)++(-60:\x-0.5) circle (0.1);
\draw[fill=white] (F)++(0:5)++(60:6)++(120:\x-0.5) circle (0.1);};
\draw[fill=white] (A)++(0:-0.5)++(60:6)++(0:1) circle (0.1);
\path (0,0)++(0:3)++(60:3) node {$\fontsize{25pt}{0}{\bl{E}}$};
\path (F)++(0:2.5)++(120:2.5) node {$\fontsize{25pt}{0}{\bl{D}}$};
\path (0,0)++(0:5)++(-60:2.5)++(60:3) node {$\fontsize{25pt}{0}{\bl{F}}$};
\path (F)++(60:6)++(0:5)++(120:4.5) node[red] {$\ss l_{1}$};
\path (F)++(60:6)++(0:3.5)++(120:4.5) node[red] {$\dots$};
\path (F)++(60:6)++(0:2)++(120:4.5) node[red] {$\ss l_{N}$};
\draw[fill=white] (F)++(0:1.5) circle (0.1);
\draw[fill=white] (F)++(0:2.5) circle (0.1);
\draw[fill=white] (F)++(0:3.5) circle (0.1);
\draw[fill=white] (F)++(0:4.5) circle (0.1);
\path (A)++(0:-0.7)++(60:5.5) node[cyan]{$\ss \tl-m_{1}$};
\path (A)++(0:-0.7)++(60:4.5) node[cyan]{$\ss \tl-m_{2}$};
\path (A)++(0:-0.7)++(60:3.5) node[cyan]{$\ss \tl-m_{3}$};
\path (A)++(0:-0.7)++(60:2) node[cyan,rotate=60]{$\ldots$};
\path (A)++(0:-0.7)++(60:0.5) node[cyan]{$\ss \tl-m_{P}$};
\path (F)++(0:0.3)++(-60:1.8) node {$\ss (z_0,\mathtt{M}_{0})$};
\draw[->] (F)++(0:0.5)++(-60:1.5)--++(120:0.5);
\draw[->] (F)++(0:1.5)++(-60:1.5)--++(120:0.5);
\draw[->] (F)++(0:2.5)++(-60:1.5)--++(120:0.5);
\draw[->] (F)++(0:3.5)++(-60:1.5)--++(120:0.5);
\draw[->] (F)++(0:4.5)++(-60:1.5)--++(120:0.5);
\path (F)++(0:3.3)++(-60:1.8) node {$ \dots $};
\path (F)++(0:4.5)++(-60:1.8) node {$\ss (z_1,\tm) $};
\path (F)++(0:1.5)++(-60:1.8)++(0:0.1) node {$\ss (z_{N},\tm) $};
\foreach \x in {1,4}{
\draw[->] (A)++(-60:\x-0.5)++(0:-0.3)++(0:-1)--++(0:0.5);};
\path (A)++(-60:1-0.5)++(0:-2) node {$\ss (x_1,1) $};
\path (A)++(-60:2)++(0:-2) node[rotate=120] {$ \ldots$};
\path (A)++(-60:4-0.5)++(0:-2.2) node {$\ss (x_{n},1) $};
\foreach \x in {1,6}{
\draw[->] (A)++(60:\x-0.5)++(0:-0.5)++(0:-1.5)--++(0:0.5);};
\path (A)++(60:1-0.5)++(0:-3.5) node {$\ss ( q^{-\tl+1}y^{-1}_{P},\tl)$};
\path (A)++(60:2.5)++(0:-3.5)++(60:0.5) node[rotate=60] {$ \dots$};
\path (A)++(60:6-0.5)++(0:-3.5) node {$\ss (q^{-\tl +1} y^{-1}_1,\tl)$};
\end{tikzpicture}
\end{equation}
where $\sum^{P}_{i=1}m_{i}=\sum^{N}_{i=1}l_{i}=n$.

\subsection{Region \texorpdfstring{$A$}{A}}
\label{subsec:spin_regionA}
Only blue particles enter $\bl{A}$ through the left and bottom boundaries. Given the boundary of $\bl{C}$, all the blue particles have to enter $\bl{B}$. In total, there are $(\tl) (P)$ blue particles entering the $\bl{A}$. Recall that the horizontal edges in $\bl{A}$ and $\bl{B}$ can carry atmost $\tl$ particles. This implies that $\tl$ number of particles exit from $\bl{A}$ through every row. A typical configuration in $\bl{A}$ is given below.

\

\begin{align*}
\begin{tikzpicture}[scale=0.6,baseline=2pt]
\draw (0:0) coordinate (A)--++(90:6) coordinate (B)--++(0:4) coordinate (C)-- ++(-90:6) coordinate (D)--cycle;
\draw[part2,->,thick,rounded corners] (A)++(0:0.5)--++(90:5.5)--++(0:3.5);
\draw[part2,->,thick,rounded corners] (A)++(0:2.5)--++(90:0.4)--++(0:1.5);
\draw[part2,->,thick,rounded corners] (A)++(0:3.5)--++(90:2.3)--++(0:0.5);
\draw[part2,->,thick,rounded corners] (A)++(0:1.5)--++(90:2.3)--++(0:1)--++(90:3.1)--++(0:1.5);
\draw[part2,->,thick,rounded corners] (A)++(90:2.4)--++(0:4);
\draw[part2,->,thick,rounded corners] (A)++(90:2.5)--++(0:4);
\draw[part2,->,thick,rounded corners] (A)++(90:0.6)--++(0:4);
\draw[part2,->,thick,rounded corners] (A)++(90:0.5)--++(0:4);
\draw[part2,->,thick,rounded corners] (A)++(90:5.6)--++(0:4);
\path (A)++(0:-0.7)++(90:5.5) node[cyan] {$\ss \tl-m_{1}$};
\path (A)++(0:-0.7)++(90:4.5) node[cyan] {$\ss \tl-m_{2}$};
\path (A)++(0:-0.5)++(90:2.5) node[rotate=90] {$\ldots$};
\path (A)++(0:-0.7)++(90:0.5) node[cyan] {$\ss \tl-m_{P}$};
\path (D) ++(0:0.5)++(90:0.5) node[rotate=0,cyan] {$\ss \tl$};
\path (D) ++(0:0.5)++(90:2.5) node[rotate=90,cyan] {$\ldots$};
\path (D) ++(0:0.5)++(90:4.5) node[cyan] {$\ss \tl$};
\path (D) ++(0:0.5)++(90:5.5) node[cyan] {$\ss \tl$};
\path (A) ++(0:2)++(90:3) node {$\bl{A} $};
\foreach\x in {1,2,3,4,5}{
\draw[lightgray] (0:0)++(90:\x)--++(0:4);};
\foreach\x in {1,2,3}{
\draw[lightgray] (0:0)++(0:\x)--++(90:6);};
\foreach\x in {1,2,3,4}{
\draw[fill=cyan] (0:0)++(0:\x-0.5) circle (0.1);
\draw[fill=white] (0:0)++(0:\x-0.5)++(90:6) circle (0.1);};
\end{tikzpicture}    
\end{align*}

\

By complementing the particles on the vertical edges with $\tl$, and swapping $1$ to $0$ and vice versa on the horizontal edges, we get the following:
\begin{equation}
\label{Spin:partitionfunctionAcomplemented}
\mathcal{Z}_{\bl{A}}(x_1,\dots,x_n;y_1,\dots,y_{P};q^{-\tl})=
\begin{tikzpicture}[scale=0.6,baseline=(current bounding box.center)]
\draw (0:0) coordinate (A)--++(90:6) coordinate (B)--++(0:4) coordinate (C)-- ++(-90:6) coordinate (D)--cycle;
\path (A)++(0:-0.7)++(90:5.5) node[black] {$\ss m_{1}$};
\path (A)++(0:-0.7)++(90:4.5) node[black] {$\ss m_{2}$};
\path (A)++(0:-0.5)++(90:2.5) node[rotate=90] {$\ldots$};
\path (A)++(0:-0.7)++(90:0.5) node[black] {$\ss m_{P}$};
\path (A) ++(0:2)++(90:3) node {$\bl{A} $};
\foreach\x in {1,2,3,4,5,6}{
\draw[fill=white] (0:0)++(0:4)++(90:\x-0.5) circle (0.1);};
\foreach\x in {1,2,3,4,5}{
\draw[lightgray] (0:0)++(90:\x)--++(0:4);};
\foreach\x in {1,2,3}{
\draw[lightgray] (0:0)++(0:\x)--++(90:6);};
\foreach\x in {1,2,3,4}{
\draw[fill=white] (0:0)++(0:\x-0.5) circle (0.1);
\draw[fill=black] (0:0)++(0:\x-0.5)++(90:6) circle (0.1);};
\end{tikzpicture}
\end{equation}
with weights:
\begin{equation}
\label{weightsforregionA}
\begin{tabular}{ccccc}
\begin{tikzpicture}[scale=1,baseline=-2pt]
 \draw[gray] (0,0) rectangle (1,1); 
\node at (0.5,-0.3) {$\ss A$};
\node at (-0.3,0.5) {$\ss B$};
\node at (0.5,1.3) {$\ss C$};
\node at (1.3,0.5) {$\ss D$};
\draw[->] (0.5,-1) node[below] {$\ss x$}--(0.5,-0.7);
\draw[->] (-1,0.5) node[left] {$\ss {\dfrac{q}{y q^{\tl}}}$}--(-0.7,0.5);
\end{tikzpicture}
&
\begin{tikzpicture}[scale=1,baseline=-2pt]
 \draw[gray] (0,0) rectangle (1,1); 
\draw[->] (0,0.4)--(1,0.4);
\draw[->] (0,0.5)--(1,0.5);
\draw[->] (0,0.6)--(1,0.6);
\node at (0.5,-0.3) {$\ss 1$};
\node at (-0.3,0.5) {$\ss m$};
\node at (0.5,1.3) {$\ss 1$};
\node at (1.5,0.5) {$\ss m$};
\end{tikzpicture}
&
\begin{tikzpicture}[scale=1,baseline=-2pt]
 \draw[gray] (0,0) rectangle (1,1); 
\draw[->] (0,0.4)--(1,0.4);
\draw[->] (0,0.5)--(1,0.5);
\draw[->,rounded corners] (0,0.6)--(0.5,0.6)--(0.5,1);
\node at (0.5,-0.3) {$\ss 0$};
\node at (-0.3,0.5) {$\ss m$};
\node at (0.5,1.3) {$\ss 1$};
\node at (1.5,0.5) {$\ss m-1$};
\end{tikzpicture}
&
\begin{tikzpicture}[scale=1,baseline=-2pt]
 \draw[gray] (0,0) rectangle (1,1); 
\draw[->] (0,0.5)--(1,0.5);
\draw[->] (0,0.6)--(1,0.6);
\draw[->,rounded corners] (0.5,0)--(0.5,0.4)--(1,0.4);
\node at (0.5,-0.3) {$\ss 1$};
\node at (-0.3,0.5) {$\ss m$};
\node at (0.5,1.3) {$\ss 0$};
\node at (1.5,0.5) {$\ss m+1$};
\end{tikzpicture}
&
\begin{tikzpicture}[scale=1,baseline=-2pt]
\draw[gray] (0,0) rectangle (1,1); 
\draw[->,rounded corners] (0,0.5)--(1,0.5);
\draw[->,rounded corners] (0,0.6)--(0.5,0.6)--(0.5,1);
\draw[->,rounded corners] (0.5,0)--(0.5,0.4)--(1,0.4);
\node at (0.5,-0.3) {$\ss 0$};
\node at (-0.3,0.5) {$\ss m$};
\node at (0.5,1.3) {$\ss 0$};
\node at (1.3,0.5) {$\ss m$};
\end{tikzpicture}\\
& $\ss \dfrac{1-q^{m}xy}{(-y q^{\tl})(x-y^{-1}q^{-\tl})}$
& $\ss \dfrac{1-q^{m}}{(-y q^{\tl})(x-y^{-1}q^{-\tl})}$
& $\ss \dfrac{(1-q^{m-\tl})x}{x-y^{-1}q^{-\tl}}$
& $\ss \dfrac{x-q^{m} y^{-1}q^{-\tl} }{x-y^{-1} q^{-\tl}} $
\end{tabular}
\end{equation}

Then $\mathcal{Z}_{\bl{A}}$ is a function in $x$ variables, $y$ variables and also in $q^{-\tl}$. After substituting $y_{i}=s$ and $q^{-\tl}=s^{2}$, from ~\cref{def:spinHl}, we get that:
\begin{equation}
\mathcal{Z}_{\bl{A}}=(-s)^{\sum^{P}_{i=1} i m_{i}} \hspace{1mm} \mathrm{F}_{\bm}(x_1,\dots,x_n;s)
\end{equation}

\subsection{Region \texorpdfstring{$B$}{B}}
 From the discussion concerning $\bl{A}$, both left and right boundaries of $\bl{B}$ are fixed and are identical. We show that the entire region is frozen. To see this, we begin with the last row. In the last row, we have $\tl$ blue particles entering from the left and the same exiting through the right. As no other blue particles are entering from the bottom, those blue particles that entered from the left are the ones that exit from that row.
 
 As each of the vertical edges can carry atmost $\tl$ particles, we get that the entire last row is frozen. Using the same reasoning for every row, we conclude that the entire region is frozen. The weights of the vertex where the vertical labels are both $(0,\tl)$ are independent of the top and bottom labels. We chose the normalisation so that the weight of $\bl{B}$ is trivial and equal to $1$.

\begin{align*}
\begin{tikzpicture}[scale=0.6]
\draw[white] (0:0)--++(0:1)--++(90:6)--++(180:1)--++(-90:6);
\draw (0:0)--++(90:2) coordinate (A)--++(90:4)--++(0:5)--++(-90:6)--++(180:5);
\foreach\x in {1,2,3,4}{
\draw[lightgray] (0:0)++(0:\x)--++(90:6);
\draw[lightgray] (0:0)++(90:\x)--++(0:5);
};
\foreach\x in {1,2,3,4,5}{
\draw[lightgray] (0:0)++(90:\x)--++(0:5);
};
\foreach\x in {1,2,3,4,5,6}{
\draw[part2,->,thick] (0:0)++(90:\x-0.5)--++(0:5);
\draw[part2,->,thick] (0:0)++(90:\x-0.4)--++(0:5);
\draw[part2,->,thick] (0:0)++(90:\x-0.6)--++(0:5);
};
\draw[red,thick] (0:0)++(0:0.4)--++(90:6);
\draw[red,thick] (0:0)++(0:0.5)--++(90:6);
\draw[red,thick] (0:0)++(0:0.6)--++(90:6);
\path (0:0)++(0:0.5)++(-90:0.5) node {$\textcolor{red}{n}$};
\path (0:0)++(0:0.5)++(90:6.5) node {$\textcolor{red}{n}$};
\end{tikzpicture}
\end{align*}

\subsection{Region \texorpdfstring{$C$}{C}}
\label{subsec:spin1regionC}
Similar to region $\bl{B}$, we normalise the weights of region $\bl{C}$ such that the weight of the vertex $\begin{tikzpicture}[scale=0.5,baseline=-2pt]
\draw[gray] (0,0) rectangle (1,1); 
\draw[->,rounded corners] (0,0.5)--(0.4,0.5)--(0.4,1);
\draw[->,rounded corners] (0.5,0)--(0.5,1);
\draw[->,rounded corners] (0.6,0)--(0.6,0.5)--(1,0.5);
\node at (0.5,-0.3) {$\ss m$};
\node at (-0.3,0.5) {$\ss 1$};
\node at (0.5,1.3) {$\ss m$};
\node at (1.3,0.5) {$\ss 1$};
\end{tikzpicture}$

As $\bl{B}$ is frozen, the bottom boundary of $\bl{C}$ is the same as the bottom boundary of $\bl{B}$. Recall that particles from $\bl{A}$ do not enter $\bl{C}$, this fixes the labels on the vertical edges of the left boundary of $\bl{C}$ to be all holes. This implies that in $\bl{C}$ we only have red particles. We have $n$ red particles enter $\bl{C}$ from the bottom boundary of the first column. Then we compute $\mathcal{Z}_{\bl{C}}$ as partition function:

\begin{equation}
\label{Spin:partitionfunctionC}
\mathcal{Z}_{\bl{C}}(x_1,\dots,x_n;z_{0},\dots,z_{N};q^{-\tm})=
\begin{tikzpicture}[scale=0.6,baseline=(current bounding box.center)]
\draw (0:0) coordinate (A)--++(90:4) coordinate (B)--++(0:6) coordinate (C)-- ++(-90:4) coordinate (D)--cycle;
\path (A)++(90:4.7)++(0:5.5) node[black] {$\ss l_{1}$};
\path (A)++(90:4.7)++(0:4.5) node[black] {$\ss l_{2}$};
\path (A)++(90:4.5)++(0:3) node[rotate=0] {$\ldots$};
\path (A)++(90:4.7)++(0:1.5) node[black] {$\ss l_{N}$};
\draw[fill=white] (0:0)++(0:0.5)++(90:4) circle (0.1);
\foreach\x in {2,3,4,5,6}{
\draw[fill=white] (0:0)++(0:\x-0.5) circle (0.1);};
\path (0:0.5)++(-90:0.5) node {$\ss n$};
\foreach\x in {1,2,3,4,5}{
\draw[lightgray] (0:0)++(0:\x)--++(90:4);};
\foreach\x in {1,2,3}{
\draw[lightgray] (0:0)++(90:\x)--++(0:6);};
\foreach\x in {1,2,3,4}{
\draw[fill=white] (0:0)++(90:\x-0.5) circle (0.1);
\draw[fill=white] (0:0)++(90:\x-0.5)++(0:6) circle (0.1);};
\draw[->] (-1,0.5) node[left] {$\ss x_n$}--(-0.5,0.5);
\node[rotate=90] at (-0.75,2) {$\dots$} ;
\draw[->] (-1,3.5) node[left] {$\ss x_1$}--(-0.5,3.5);
\draw[->] (0.5,-1.5) node[below] {$\ss z_0$}--(0.5,-1);
\draw[->] (1.5,-1.5) node[below] {$\ss z_{N}$}--(1.5,-1);
\node[rotate=0] at (3.5,-1.25) {$\dots$} ;
\draw[->] (5.5,-1.5) node[below] {$\ss z_1$}--(5.5,-1);
\end{tikzpicture}
\end{equation}

with weights:
\begin{equation}
\label{weightsforregionc}
\begin{tabular}{ccccc}
\begin{tikzpicture}[scale=1,baseline=-2pt]
 \draw[gray] (0,0) rectangle (1,1); 
\node at (0.5,-0.3) {$\ss A$};
\node at (-0.3,0.5) {$\ss B$};
\node at (0.5,1.3) {$\ss C$};
\node at (1.3,0.5) {$\ss D$};
\draw[->] (0.5,-1) node[below] {$\ss z$}--(0.5,-0.7);
\draw[->] (-1,0.5) node[left] {$\ss x$}--(-0.7,0.5);
\end{tikzpicture}
&
\begin{tikzpicture}[scale=1,baseline=-2pt]
\draw[gray] (0,0) rectangle (1,1); 
\draw[->,rounded corners] (0.4,0)--(0.4,1);
\draw[->,rounded corners] (0.5,0)--(0.5,1);
\draw[->,rounded corners] (0.6,0)--(0.6,1);
\node at (0.5,-0.3) {$\ss m$};
\node at (-0.3,0.5) {$\ss 0$};
\node at (0.5,1.3) {$\ss m$};
\node at (1.3,0.5) {$\ss 0$};
\end{tikzpicture}
&
\begin{tikzpicture}[scale=1,baseline=-2pt]
\draw[gray] (0,0) rectangle (1,1); 
\draw[->,rounded corners] (0.4,0)--(0.4,1);
\draw[->,rounded corners] (0.5,0)--(0.5,1);
\draw[->,rounded corners] (0.6,0)--(0.6,0.5)--(1,0.5);
\node at (0.5,-0.3) {$\ss m$};
\node at (-0.3,0.5) {$\ss 0$};
\node at (0.5,1.3) {$\ss m-1$};
\node at (1.3,0.5) {$\ss 1$};
\end{tikzpicture}
&
\begin{tikzpicture}[scale=1,baseline=-2pt]
\draw[gray] (0,0) rectangle (1,1); 
\draw[->,rounded corners] (0,0.5)--(0.4,0.5)--(0.4,1);
\draw[->,rounded corners] (0.5,0)--(0.5,1);
\draw[->,rounded corners] (0.6,0)--(0.6,1);
\node at (0.5,-0.3) {$\ss m$};
\node at (-0.3,0.5) {$\ss 1$};
\node at (0.5,1.3) {$\ss m+1$};
\node at (1.3,0.5) {$\ss 0$};
\end{tikzpicture}
&
\begin{tikzpicture}[scale=1,baseline=-2pt]
\draw[gray] (0,0) rectangle (1,1); 
\draw[->,rounded corners] (0,0.5)--(0.4,0.5)--(0.4,1);
\draw[->,rounded corners] (0.5,0)--(0.5,1);
\draw[->,rounded corners] (0.6,0)--(0.6,0.5)--(1,0.5);
\node at (0.5,-0.3) {$\ss m$};
\node at (-0.3,0.5) {$\ss 1$};
\node at (0.5,1.3) {$\ss m$};
\node at (1.3,0.5) {$\ss 1$};
\end{tikzpicture}\\
& $\ss \dfrac{(- z q^{\tm})(1-q^{m}x z^{-1} q^{-\tm})}{x-z}$
& $\ss{ \dfrac{(1-q^{m})x}{x-z}}$
& $\ss \dfrac{(-z q^{\tm})(1-q^{m-\tm})}{x-z} $
& $\ss \dfrac{x-zq^{m}}{x-z}$\\[2 em]
\end{tabular}
\end{equation}
\begin{lemma}
\label{lemma:regionC_partition function}
   At $z_{0}=0$, $z_{i}=s$ and $q^{-\tm}=s^{2}$ we have:
   \begin{equation}
     \mathcal{Z}_{\bl{C}}=(-s)^{-\sum_{i}i l_{i}}(q;q)_{n}\left(\prod^{N}_{i=1}\dfrac{(s^{2};q)_{l_{i}}}{(q;q)_{l_{i}}}\right)\mathrm{F}_{\bll}(x_{1},\dots,x_{n};s)
   \end{equation}
\end{lemma}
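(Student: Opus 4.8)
The plan is to mirror, for the higher-spin model, the two computations already carried out in the six-vertex setting: the evaluation of $\mathcal{Z}_{\bl{A}}$ in \cref{subsec:spin_regionA} and the Region~$\bl{C}$ argument culminating in \eqref{weightofC}. The proof will break into three steps — freeze the first column, transform the remainder of the lattice into a copy of the Region~$\bl{A}$ lattice, and assemble the prefactors.

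First I would freeze the first column of \eqref{Spin:partitionfunctionC} at $z_0=0$. Its bottom edge carries the reservoir of $n$ red particles, its left edges carry nothing, and its top edge is empty, so in each of the $n$ rows exactly one red particle must exit the column to the right. Specialising $q^{-\tm}=s^2$ in the weights \eqref{weightsforregionc} and evaluating the relevant ``turn-right'' vertex row by row (the analogue of the six-vertex vertex of weight $\tfrac{1-q^m}{1-z_0x^{-1}}$), I expect the column to be frozen and to contribute the overall constant $(q;q)_n$. What then remains is the partition function of the lattice on columns $2,\dots,N+1$: $n$ red particles enter from the left, one per row, all holes on the bottom, the string $\bll$ on the top, holes on the right, with $z_i=s$ and $q^{-\tm}=s^2$.

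Next I would transform this remaining lattice, following the proof of \eqref{eq:relationshipbetweenfandtildef}: complement the particles on the capacity-one edges; multiply by $q^{-1}$ at every vertex whose left edge carries a red particle (a global factor $q^{-\sum_i(i-1)l_i}$); attach a sign to the two vertex types indicated in the six-vertex proof (a global factor $(-1)^n$); and rotate by $90^{\circ}$. A direct comparison of the resulting tiles with the Region~$\bl{A}$ weights \eqref{weightsforregionA} — with $\bm$ replaced by $\bll$, $P$ by $N$, and $\tl$ by $\tm$ — should identify the transformed lattice with $\mathcal{Z}_{\bl{A}}$ for the string $\bll$, up to an explicit scalar; by \cref{subsec:spin_regionA} this equals $(-s)^{\sum_i il_i}\,\mathrm{F}_{\bll}(x_1,\dots,x_n;s)$ once $z_i=s$ and $q^{-\tm}=s^2$ are substituted.

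Finally I would collect the prefactors. The Region~$\bl{C}$ weights \eqref{weightsforregionc} were normalised so that a fixed frozen vertex has weight $1$; relative to the Region~$\bl{A}$ weights this normalisation should contribute, for each capacity-$\tm$ line carrying the label $l_i$, a ratio of $q$-Pochhammer symbols $(s^2;q)_{l_i}/(q;q)_{l_i}$ when $q^{-\tm}=s^2$, producing the product $\prod_{i\ge1}(s^2;q)_{l_i}/(q;q)_{l_i}$. Combining the constant $(q;q)_n$ from the first column, the signs and powers of $q$ from the complement--rotate procedure (which, as in the six-vertex case, should recombine so that no net power of $q$ survives), and the $(-s)$-power coming from the Region~$\bl{A}$ prefactor, gives exactly $(-s)^{-\sum_i il_i}(q;q)_n\bigl(\prod_{i\ge1}(s^2;q)_{l_i}/(q;q)_{l_i}\bigr)\mathrm{F}_{\bll}$. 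The hard part will be exactly this bookkeeping: checking that the first column genuinely freezes at $z_0=0$ (rather than merely annihilating the partition function) with weight precisely $(q;q)_n$, identifying the $q$-Pochhammer ratio as the discrepancy between the two normalisations, and tracking every $q$-, $s$- and sign-factor through the transpose without error; once these are in place, the combinatorial core — that the transformed lattice is literally a Region~$\bl{A}$ lattice — is routine.
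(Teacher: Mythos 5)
Your skeleton is the same as the paper's: freeze the first column at $z_0=0$ to extract the constant $(q;q)_n$, then identify the remaining $N$-column lattice with $\mathrm{F}_{\bll}$ up to explicit prefactors. The detour through Region~$\bl{A}$ is harmless but unnecessary — $\mathcal{Z}_{\bl{A}}$ is itself identified with $\mathrm{F}_{\bm}$ by comparison with the defining lattice of \cref{def:spinHl}, so the paper simply complements the horizontal edges and compares the resulting weights \eqref{weights:gtof} with \eqref{weights:def-spinHl} directly.

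The genuine gap is in your middle step. You propose to import the six-vertex recipe from the proof of \eqref{eq:relationshipbetweenfandtildef} — multiply by $q^{-1}$ at each vertex with a red left edge, attach signs to two tile types, rotate — and you assert that the resulting powers of $q$ "recombine so that no net power of $q$ survives, as in the six-vertex case". But in the six-vertex case they do \emph{not} cancel: they produce the factor $(-1)^nq^{-\sum_i(i-1)(l_i-w_i)}$ which survives all the way into \eqref{weightofC} and is only absorbed later via \cref{lemma:redvertices_in_puzzles}. More importantly, in the higher-spin setting the discrepancy between the complemented Region~$\bl{C}$ weights and \eqref{weights:def-spinHl} is not a $q$-power-and-sign gauge at all. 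The particle-creation and particle-annihilation vertices on the capacity-$\tm$ lines carry the factors $(1-q^{m})$ and $(1-q^{m}s^{2})$ in the "wrong" places, and the correct fix is the conjugation of \eqref{weights:conjguated_region_C}, i.e.\ a gauge by $(q;q)_j/(s^2;q)_j$ on each vertical edge of occupation $j$. It is the telescoping of this conjugation along each column — $g(l_i)/g(0)$ — that produces the factor $\prod_i(s^2;q)_{l_i}/(q;q)_{l_i}$ in the lemma; it does not come from the normalisation of the frozen vertex, which is a single overall scalar and cannot generate an $l_i$-dependent Pochhammer ratio. As written, your plan would either introduce a spurious net power of $q$ or fail to produce the Pochhammer product, so the central computation is missing rather than merely deferred to bookkeeping.
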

\begin{proof}
Observe that $n$ particles enter through the first column and none exit through it. As there are $n$ rows, this enforces that a particle turns right in every row of the first column. Therefore, as we set $z_{0}=0$, the overall weight of the first column is $(q;q)_{n}$.
\begin{equation}
\label{Spin:gfunction-z0=0}
\mathcal{Z}_{\bl{C}}(x_1,\dots,x_n;0,z_{1},\dots,z_{N};q^{-\tm})=(q;q)_{n} \hspace{3mm}\times
\begin{tikzpicture}[scale=0.6,baseline=(current bounding box.center)]
\draw (0:0) coordinate (A)--++(90:4) coordinate (B)--++(0:5) coordinate (C)-- ++(-90:4) coordinate (D)--cycle;
\path (A)++(90:4.7)++(0:4.5) node[black] {$\ss l_{1}$};
\path (A)++(90:4.7)++(0:3.5) node[black] {$\ss l_{2}$};
\path (A)++(90:4.5)++(0:2) node[rotate=0] {$\ldots$};
\path (A)++(90:4.7)++(0:0.5) node[black] {$\ss l_{N}$};
\foreach\x in {0,1,2,3,4}{
\draw[lightgray] (0:0)++(0:\x)--++(90:4);};
\foreach\x in {1,2,3}{
\draw[lightgray] (0:0)++(90:\x)--++(0:5);};
\foreach\x in {1,2,3,4}{
\draw[fill=black] (0:0)++(90:\x-0.5) circle (0.1);
\draw[fill=white] (0:0)++(90:\x-0.5)++(0:5) circle (0.1);};
\draw[->] (-1,0.5) node[left] {$\ss x_n$}--(-0.5,0.5);
\node[rotate=90] at (-0.75,2) {$\dots$};
\draw[->] (-1,3.5) node[left] {$\ss x_1$}--(-0.5,3.5);
\draw[->] (0.5,-1.5) node[below] {$\ss z_{N}$}--(0.5,-1);
\node[rotate=0] at (2.5,-1.25) {$\dots$} ;
\draw[->] (4.5,-1.5) node[below] {$\ss z_1$}--(4.5,-1);
\end{tikzpicture}
\end{equation}
Let us redraw the above lattice by complementing the particles on the horizontal edges, and also performing the substitutions $z_{i}=s$ and $q^{-\tm}=s^{2}$. Then
\begin{equation}
\label{Spin:gfunction-z0=0-complemented}
\mathcal{Z}_{\bl{C}}(x_1,\dots,x_n;0,s,\dots,s;s^{2})=(q;q)_{n} \hspace{3mm}\times
\begin{tikzpicture}[scale=0.6,baseline=(current bounding box.center)]
\draw (0:0) coordinate (A)--++(90:4) coordinate (B)--++(0:5) coordinate (C)-- ++(-90:4) coordinate (D)--cycle;
\path (A)++(90:4.7)++(0:4.5) node[black] {$\ss l_{1}$};
\path (A)++(90:4.7)++(0:3.5) node[black] {$\ss l_{2}$};
\path (A)++(90:4.5)++(0:2) node[rotate=0] {$\ldots$};
\path (A)++(90:4.7)++(0:0.5) node[black] {$\ss l_{N}$};
\foreach\x in {0,1,2,3,4}{
\draw[lightgray] (0:0)++(0:\x)--++(90:4);};
\foreach\x in {1,2,3}{
\draw[lightgray] (0:0)++(90:\x)--++(0:5);};
\foreach\x in {1,2,3,4}{
\draw[fill=white] (0:0)++(90:\x-0.5) circle (0.1);
\draw[fill=black] (0:0)++(90:\x-0.5)++(0:5) circle (0.1);};
\draw[->] (-1,0.5) node[left] {$\ss x_n$}--(-0.5,0.5);
\node[rotate=90] at (-0.75,2) {$\dots$};
\draw[->] (-1,3.5) node[left] {$\ss x_1$}--(-0.5,3.5);
\draw[->] (0.5,-1.5) node[below] {$\ss s$}--(0.5,-1);
\node[rotate=0] at (2.5,-1.25) {$\dots$} ;
\draw[->] (4.5,-1.5) node[below] {$\ss s$}--(4.5,-1);
\end{tikzpicture}
\end{equation}
with weights:
\begin{equation}
\label{weights:gtof}
\begin{tabular}{ccccc}
\begin{tikzpicture}[scale=1,baseline=-2pt]
 \draw[gray] (0,0) rectangle (1,1); 
\node at (0.5,-0.3) {$\ss A$};
\node at (-0.3,0.5) {$\ss B$};
\node at (0.5,1.3) {$\ss C$};
\node at (1.3,0.5) {$\ss D$};
\draw[->] (0.5,-1) node[below] {$\ss s$}--(0.5,-0.7);
\draw[->] (-1,0.5) node[left] {$\ss x$}--(-0.7,0.5);
\end{tikzpicture}
&
\begin{tikzpicture}[scale=1,baseline=-2pt]
\draw[gray] (0,0) rectangle (1,1); 
\draw[<-,rounded corners] (0,0.5)--(0.4,0.5)--(0.4,0);
\draw[->,rounded corners] (0.5,0)--(0.5,1);
\draw[<-,rounded corners] (0.6,1)--(0.6,0.5)--(1,0.5);
\node at (0.5,-0.3) {$\ss m$};
\node at (-0.3,0.5) {$\ss 1$};
\node at (0.5,1.3) {$\ss m$};
\node at (1.3,0.5) {$\ss 1$};
\end{tikzpicture}
&
\begin{tikzpicture}[scale=1,baseline=-2pt]
\draw[gray] (0,0) rectangle (1,1); 
\draw[->,rounded corners] (0.5,0)--(0.5,1);
\draw[->,rounded corners] (0.6,0)--(0.6,1);
\draw[->,rounded corners] (0.4,0)--(0.4,0.5)--(0,0.5);
\node at (0.5,-0.3) {$\ss m$};
\node at (-0.3,0.5) {$\ss 1$};
\node at (0.5,1.3) {$\ss m-1$};
\node at (1.3,0.5) {$\ss 0$};
\end{tikzpicture}
&
\begin{tikzpicture}[scale=1,baseline=-2pt]
\draw[gray] (0,0) rectangle (1,1); 
\draw[->,rounded corners] (1,0.5)--(0.6,0.5)--(0.6,1);
\draw[->,rounded corners] (0.4,0)--(0.4,1);
\draw[->,rounded corners] (0.5,0)--(0.5,1);
\node at (0.5,-0.3) {$\ss m$};
\node at (-0.3,0.5) {$\ss 0$};
\node at (0.5,1.3) {$\ss m+1$};
\node at (1.3,0.5) {$\ss 1$};
\end{tikzpicture}
&
\begin{tikzpicture}[scale=1,baseline=-2pt]
\draw[gray] (0,0) rectangle (1,1); 
\draw[->,rounded corners] (0.4,0)--(0.4,1);
\draw[->,rounded corners] (0.5,0)--(0.5,1);
\draw[->,rounded corners] (0.6,0)--(0.6,1);
\node at (0.5,-0.3) {$\ss m$};
\node at (-0.3,0.5) {$\ss 0$};
\node at (0.5,1.3) {$\ss m$};
\node at (1.3,0.5) {$\ss 0$};
\end{tikzpicture}\\
& $\ss \dfrac{(-s^{-1})(1-q^{m}xs)}{x-s}$
& $\ss{ \dfrac{(1-q^{m})x}{x-s}}$
& $\ss \dfrac{(-s^{-1})(1-q^{m}s^{2})}{x-s} $
& $\ss \dfrac{x-sq^{m}}{x-s}$\\[2 em]
\end{tabular}
\end{equation}

The weights in the above line differs from those of ~\cref{def:spinHl} in the weights of the second and third vertices. We define conjugated vertices as follows:
\begin{equation}
\label{weights:conjguated_region_C}
\begin{tabular}{ccccc}
\begin{tikzpicture}[scale=1,baseline=(current bounding box.center)]
 \draw[gray] (0,0) rectangle (1,1); 
\node at (0.5,-0.3) {$\ss A$};
\node at (-0.3,0.5) {$\ss B$};
\node at (0.5,1.3) {$\ss C$};
\node at (1.3,0.5) {$\ss D$};
\draw[fill=black] (0.4,0.4) rectangle (0.6,0.6);
\draw[->] (-1,0.5) node[left] {$\ss x$}--(-0.7,0.5);
\end{tikzpicture}
=$\dfrac{(q;q)_{C} (s^{2};q)_{A}}{(q;q)_{A} (s^{2};q)_{C}}$ \hspace{
3mm}
\begin{tikzpicture}[scale=1,baseline=(current bounding box.center)]
 \draw[gray] (0,0) rectangle (1,1); 
\node at (0.5,-0.3) {$\ss A$};
\node at (-0.3,0.5) {$\ss B$};
\node at (0.5,1.3) {$\ss C$};
\node at (1.3,0.5) {$\ss D$};
\draw[->] (-1,0.5) node[left] {$\ss x$}--(-0.7,0.5);
\end{tikzpicture}
\end{tabular}
\end{equation}
Then the partition function of the lattice in equation~\eqref{Spin:gfunction-z0=0-complemented} with the conjugated vertices is equal to
\[
(-s^{-1})^{\sum^{N}_{i=1}i l_{i}}\mathrm{F}_{l}(x_{1},\dots,x_{n};s).
\]
We then conclude that:
\[
(-s)^{\sum^{N}_{i=1}i l_{i}}\mathcal{Z}_{\bl{C}}(x_{1},\dots,x_{N};0,s,\dots,s;s^{2})=\prod^{N}_{i=1}\dfrac{(s^{2};q)_{l_{i}}}{(q;q)_{l_{i}}}\mathrm{F}_{\bll}(x_{1},\dots,x_{n};s)
\]

\end{proof}

\subsection{Region \texorpdfstring{$D$}{D}}
We assume that $\left|\dfrac{1-q^{\tl}yz}{1-yz} \cdot \dfrac{(1-x y)}{1-x y q^{\tl}} \right|<1$. With these assumptions and applying the same argument as in~\cref{sub:spin1regD}, we conclude that $\bl{D}$ is entirely frozen, where all the blue particles travel across into $\bl{F}$, with a trivial weight. 
\subsection{Region \texorpdfstring{$F$}{F}}
Let us begin by analysing the boundaries of $\bl{F}$. As a consequence of $\bl{D}$ being frozen, the bottom boundary of $\bl{F}$ is fixed i.e., a blue particle enters from the bottom boundary in every column. Observe that $\tl$ particles exit from every row through the right boundaries. This implies that the labels of the edges of the left boundary are of the form $(0,\tl-k_{i})$ where $\sum_{i}k_{i}=n$. Observe that these boundaries as precisely the same as the boundaries of $\bl{A}$. As the weights used in $\bl{A}$ and $\bl{F}$ are same, $\mathcal{Z}_{\bl{F}}(x_1,\dots,x_n;y_{1},\dots,y_{p};q^{-\tl})$ is a function in $x$ variables, $y$ variables and also in $q^{-\tl}$. After substituting $y_{i}=s$ and $q^{-\tl}=s^{2}$, from ~\cref{def:spinHl} and the argument used in ~\cref{subsec:spin_regionA}, we get that:
\begin{equation}
(-s^{-1})^{\sum^{P}_{i=1} i k_{i}} \mathcal{Z}_{\bl{F}}(x_{1},\dots,x_{n};s,\dots,s;s^{2})=\mathrm{F}_{\bk}(x_1,\dots,x_n;s)
\end{equation}

\subsection{Region \texorpdfstring{$E$}{E}} Finally, we are left with $\bl{E}$. All boundaries are fixed except for the right boundary. However, this boundary is in the same form as the left boundary of $\bl{F}$. For an arbitrary boundary $\mathbf{k}$, $\bl{E}$ is of the following:
\begin{align}
\label{puzzle:higherspin-uncomplemented-blueparticles}
 \begin{tikzpicture}[scale=0.6,baseline=(current bounding box.center)]
 \draw (0,0)--++(0:5)--++(90:6)--++(180:5)--++(-90:6);
  \foreach\x in {1,2,3,4,5}{
 \draw[gray] (0:\x)--++(90:6);
  \draw[gray] (90:\x)--++(0:5);
 };
 \path (0:0)++(180:1)++(90:0.5) node[cyan] {$\ss \tl- m_{P}$};
 \path (0:0)++(180:1)++(90:2) node[rotate=90] {$\dots$};
 \path (0:0)++(180:1)++(90:3.5) node[cyan] {$\ss \tl-m_{3}$};
 \path (0:0)++(180:1)++(90:4.5) node[cyan] {$\ss \tl-m_{2}$};
  \path (0:0)++(180:1)++(90:5.5) node[cyan] {$\ss \tl-m_1$};
 \path (0:0)++(0:6)++(90:0.5) node[cyan] {$\ss \tl-k_{P}$};
 \path (0:0)++(0:6)++(90:2) node[rotate=90] {$\dots$};
 \path (0:0)++(0:6)++(90:3.5) node[cyan] {$\ss \tl-k_{3}$};
 \path (0:0)++(0:6)++(90:4.5) node[cyan] {$\ss \tl-k_{2}$};
  \path (0:0)++(0:6)++(90:5.5) node[cyan] {$\ss \tl-k_1$};
  \path (0:0)++(-90:0.5)++(0:0.5) node {$\ss \textcolor{red}{{n}}$};
\draw[fill=white] (0:0)++(0:1.5) circle (0.1);
\draw[fill=white] (0:0)++(0:2.5) circle (0.1);
\draw[fill=white] (0:0)++(0:3.5) circle (0.1);
\draw[fill=white] (0:0)++(0:4.5) circle (0.1);
  \draw[fill=white] (0:0)++(90:6)++(0:0.5) circle (0.1);
  \path (0:0)++(90:6.5)++(0:1.6) node[red] {$\ss l_{N}$};
  \path (0:0)++(90:6.5)++(0:2.6) node[red] {$\dots$};
  \path (0:0)++(90:6.5)++(0:3.6) node[red] {$\ss l_2$};
  \path (0:0)++(90:6.5)++(0:4.6) node[red] {$\ss l_1$};
 \end{tikzpicture}
\end{align}
where $\sum_{i}k_{i}=n$. As usual, it is convenient to redraw the lattice by complementing particles on vertical edges with their corresponding spins, as shown in the picture below. 
\begin{align}
\label{lattice:regionE_complemented}
 \begin{tikzpicture}[scale=0.6,baseline=(current bounding box.center)]
 \draw (0,0)--++(0:5)--++(90:6)--++(180:5)--++(-90:6);
  \foreach\x in {1,2,3,4,5}{
 \draw[gray] (0:\x)--++(90:6);
  \draw[gray] (90:\x)--++(0:5);
 };
  \foreach \x in {1,2,3,4,5,6}{
 \draw[->] (0:0)++(180:2)++(90:\x-0.5)--++(0:0.5);
 };
 \foreach \x in {1,2,3,4,5}{
 \draw[->] (0:0)++(-90:1.5)++(0:\x-0.5)--++(90:0.5);
 };
 \path (0:0)++(180:3.5)++(90:0.5) node {$\ss (q^{1-\tl}y^{-1}_{P},\tl)$};
 \path (0:0)++(180:3.5)++(90:2) node[rotate=90] {$\dots$};
 \path (0:0)++(180:3.5)++(90:3.5) node {$\ss (q^{1-\tl}y^{-1}_{3},\tl)$};
 \path (0:0)++(180:3.5)++(90:4.5) node {$\ss (q^{1-\tl}y^{-1}_{2},\tl)$};
  \path (0:0)++(180:3.5)++(90:5.5) node {$\ss (q^{1-\tl}y^{-1}_{1},\tl)$};
  \path (0:0)++(-90:2)++(0:0.5) node {$\ss (z_{0},\mathtt{M_{0}})$};
  \path (0:0)++(-90:2)++(0:3) node {$\dots$};
  \path (0:0)++(-90:2)++(0:4.6) node {$\ss (z_{1},\tm)$};
 \path (0:0)++(180:0.5)++(90:0.5) node[cyan] {$\ss  m_{P}$};
 \path (0:0)++(180:0.5)++(90:2) node[rotate=90] {$\dots$};
 \path (0:0)++(180:0.5)++(90:3.5) node[cyan] {$\ss m_{3}$};
 \path (0:0)++(180:0.5)++(90:4.5) node[cyan] {$\ss m_{2}$};
  \path (0:0)++(180:0.5)++(90:5.5) node[cyan] {$\ss m_1$};
 \path (0:0)++(0:5.5)++(90:0.5) node[cyan] {$\ss k_{P}$};
 \path (0:0)++(0:5.5)++(90:2) node[rotate=90] {$\dots$};
 \path (0:0)++(0:5.5)++(90:3.5) node[cyan] {$\ss k_{3}$};
 \path (0:0)++(0:5.5)++(90:4.5) node[cyan] {$\ss k_{2}$};
  \path (0:0)++(0:5.5)++(90:5.5) node[cyan] {$\ss k_1$};
  \path (0:0)++(-90:0.5)++(0:0.5) node {$\ss \textcolor{red}{{n}}$};
\draw[fill=white] (0:0)++(0:1.5) circle (0.1);
\draw[fill=white] (0:0)++(0:2.5) circle (0.1);
\draw[fill=white] (0:0)++(0:3.5) circle (0.1);
\draw[fill=white] (0:0)++(0:4.5) circle (0.1);
  \draw[fill=white] (0:0)++(90:6)++(0:0.5) circle (0.1);
  \path (0:0)++(90:6.5)++(0:1.6) node[red] {$\ss l_{N}$};
  \path (0:0)++(90:6.5)++(0:2.6) node[red] {$\dots$};
  \path (0:0)++(90:6.5)++(0:3.6) node[red] {$\ss l_2$};
  \path (0:0)++(90:6.5)++(0:4.6) node[red] {$\ss l_1$};
 \end{tikzpicture}
\end{align}
We now study the consequence of taking $z_{0}=0$. We prove in the ~\cref{spin:z0_limitinpuzzles} that at $z_{0}=0, q^{-\tm_{0}}=0$, the first column freezes with an overall weight of $(q;q)_{n}$. Then at $z_{0}=0$, $y_{i}=s$ and $z_{i}=s$, $q^{-\tl}=s^{2}$, $q^{-\tm_{0}}=0$ and $q^{-\tm}=s^{2}$, we have:
\begin{multline}
\mathcal{Z}_{\bl{E}}(s,\dots,s;{0},s,\dots,s;s^{2};0,s^{2})\hspace{2mm} 
\mathcal{Z}_{\bl{F}}(x_1,\dots,x_n;s,\dots,s;s^{2})\\
=(q;q)_{n}\sum_{\bk}
 \left(
 \begin{tikzpicture}[scale=0.6,baseline=(current bounding box.center)]
 \draw (0:1)--++(0:4)--++(90:6)--++(180:4)--++(-90:6);
  \foreach\x in {1,2,3,4,5}{
 \draw[gray] (0:\x)--++(90:6);
  \draw[gray] (90:\x)++(0:1)--++(0:4);
 };
 \path (0:0)++(180:0.5)++(90:0.5) node {$\ss \textcolor{cyan}{m_{P}}+ \textcolor{red}{m_{P}}$};
 \path (0:0)++(180:0.5)++(90:2) node[rotate=90] {$\dots$};
 \path (0:0)++(180:0.5)++(90:3.5) node {$\ss \textcolor{cyan}{m_{3}} + \textcolor{red}{m_{3}}$};
 \path (0:0)++(180:0.5)++(90:4.5) node {$\ss \textcolor{cyan}{m_{2}}+ \textcolor{red}{m_{2}}$};
  \path (0:0)++(180:0.5)++(90:5.5) node {$\ss \textcolor{cyan}{m_{1}}+ \textcolor{red}{m_{1}}$};
 \path (0:0)++(0:5.5)++(90:0.5) node[cyan] {$\ss k_{P}$};
 \path (0:0)++(0:5.5)++(90:2) node[rotate=90] {$\dots$};
 \path (0:0)++(0:5.5)++(90:3.5) node[cyan] {$\ss k_{3}$};
 \path (0:0)++(0:5.5)++(90:4.5) node[cyan] {$\ss k_{2}$};
  \path (0:0)++(0:5.5)++(90:5.5) node[cyan] {$\ss k_1$};
\draw[fill=white] (0:0)++(0:1.5) circle (0.1);
\draw[fill=white] (0:0)++(0:2.5) circle (0.1);
\draw[fill=white] (0:0)++(0:3.5) circle (0.1);
\draw[fill=white] (0:0)++(0:4.5) circle (0.1);
  \path (0:0)++(90:6.5)++(0:1.6) node[red] {$\ss l_{N}$};
  \path (0:0)++(90:6.5)++(0:2.6) node[red] {$\dots$};
  \path (0:0)++(90:6.5)++(0:3.6) node[red] {$\ss l_2$};
  \path (0:0)++(90:6.5)++(0:4.6) node[red] {$\ss l_1$};
 \end{tikzpicture}
 \right)(-s^{-1})^{\sum^{P}_{i=1}i k_{i}}\mathrm{F}_{\bk}(x_{1},\dots,x_{n};s)
\end{multline}

\subsection{Final equation}
The Yang--Baxter equation~\ref{eq:YBEhigerspin}, gives the following relation among the partition functions of various regions:
\begin{equation}
    \mathcal{Z}_{\bl{A}} \dfrac{\mathcal{Z}_{\bl{C}}}{\mathcal{Z}_{\bl{D}}}=\dfrac{\mathcal{Z}_{\bl{E}}}{\mathcal{Z}_{\bl{B}}}\mathcal{Z}_{\bl{F}}
\end{equation}
By plugging in the actual functions in the above relation, we get the following equation which governs the product of two functions:
\begin{multline}
\label{eq:hs-generalequation}
\mathrm{F}_{\bm}(x_1,\dots,x_n;s) \hspace{1mm}\mathrm{F}_{\bll}(x_1,\dots,x_{n};s)
=\\
\left( \prod^{N}_{i=1}\dfrac{(q;q)_{l_{i}}}{(s^2;q)_{l_{i}}}\right) \hspace{1mm}\sum_{\bk} (-s)^{\sum^{P}_{i=1} i(k_{i}-m_{i})} (-s)^{\sum^{N}_{i=1} i l_{i}}  \hspace{1mm}
\mathcal{C}^{\bk}_{\bll,\bm}(q,s) \hspace{1mm}\mathrm{F}_{\bk}(x_1,\dots,x_{n};s)  
\end{multline}
where 
\begin{equation}
\label{puzzles:hs-general}
\mathcal{C}^{\bk}_{\bll,\bm}(q,s) =
 \begin{tikzpicture}[scale=0.6,baseline=(current bounding box.center)]
 \draw (0:1)--++(0:4)--++(90:6)--++(180:4)--++(-90:6);
  \foreach\x in {1,2,3,4,5}{
 \draw[gray] (0:\x)--++(90:6);
  \draw[gray] (90:\x)++(0:1)--++(0:4);
 };
 \path (0:0)++(180:0.5)++(90:0.5) node {$\ss \textcolor{cyan}{m_{P}}+ \textcolor{red}{m_{P}}$};
 \path (0:0)++(180:0.5)++(90:2) node[rotate=90] {$\dots$};
 \path (0:0)++(180:0.5)++(90:3.5) node {$\ss \textcolor{cyan}{m_{3}} + \textcolor{red}{m_{3}}$};
 \path (0:0)++(180:0.5)++(90:4.5) node {$\ss \textcolor{cyan}{m_{2}}+ \textcolor{red}{m_{2}}$};
  \path (0:0)++(180:0.5)++(90:5.5) node {$\ss \textcolor{cyan}{m_{1}}+ \textcolor{red}{m_{1}}$};
 \path (0:0)++(0:5.5)++(90:0.5) node[cyan] {$\ss k_{P}$};
 \path (0:0)++(0:5.5)++(90:2) node[rotate=90] {$\dots$};
 \path (0:0)++(0:5.5)++(90:3.5) node[cyan] {$\ss k_{3}$};
 \path (0:0)++(0:5.5)++(90:4.5) node[cyan] {$\ss k_{2}$};
  \path (0:0)++(0:5.5)++(90:5.5) node[cyan] {$\ss k_1$};
\draw[fill=white] (0:0)++(0:1.5) circle (0.1);
\draw[fill=white] (0:0)++(0:2.5) circle (0.1);
\draw[fill=white] (0:0)++(0:3.5) circle (0.1);
\draw[fill=white] (0:0)++(0:4.5) circle (0.1);
  \path (0:0)++(90:6.5)++(0:1.6) node[red] {$\ss l_{N}$};
  \path (0:0)++(90:6.5)++(0:2.6) node[red] {$\dots$};
  \path (0:0)++(90:6.5)++(0:3.6) node[red] {$\ss l_2$};
  \path (0:0)++(90:6.5)++(0:4.6) node[red] {$\ss l_1$};
 \end{tikzpicture}
\end{equation}
By slightly adjusting the weights in region $\bl{E}$ of the above equation, we arrive at our Theorem ~\ref{theorem:Puzzles-Spin-Hall}.

\

\appendix
\section{\texorpdfstring{$z_{0}=0$ limit for the puzzles}{Text}}
\label{spin:z0_limitinpuzzles}
\begin{wrapfigure}{r}{0.25\textwidth}
\begin{center}
\begin{tikzpicture}[scale=0.6]
\draw[gray] (0:0)--++(0:1)--++(90:6)--++(180:1)--++(-90:6);
\foreach\x in {1,2,3,4,5,6}{
\draw[gray] (0,0)++(90:\x)--++(0:1);};
\draw[cyan,thick,stealth-](0,5.6)--(1,5.6);
\draw[cyan,thick,stealth-](0,4.6)--(1,4.6);
\draw[cyan,thick,stealth-](0,4.7)--(1,4.7);
\draw[cyan,thick,stealth-](0,2.6)--(1,2.6);
\draw[red,thick,-stealth,rounded corners](0.7,0)--(0.7,2.5)--(1,2.5);
\draw[red,thick,-stealth,rounded corners](0.6,0)--(0.6,4.4)--(1,4.4);
\draw[red,thick,-stealth,rounded corners](0.5,0)--(0.5,4.5)--(1,4.5);
\draw[red,thick,-stealth,rounded corners](0.4,0)--(0.4,5.5)--(1,5.5);
\end{tikzpicture}
\end{center}
\end{wrapfigure}
We argue that setting $z_{0}=q^{-\tm_{0}}=0$ in ~\ref{lattice:regionE_complemented} results in the first column freezing with weight $(q;q)_{n}$. To show this, we explicitly compute the weights for the type of vertices in the initial column of the puzzles. We prove that when a blue particle entering from the right turns upwards, the weight of such a vertex has the factor $z^{\alpha}_{0}$ for some $\alpha>0$. Subsequently, by setting $z_{0}=0$, the weight of such vertices vanishes. Therefore, the first column freezes, with all blue particles traversing across, see the picture on the right.

Consider the general vertex \begin{tikzpicture}[scale=0.8,baseline=(current bounding box.center))]
 \draw[gray] (0,0) rectangle (1,1); 
\node at (0.5,-0.3) {$\ss (a_1 ,a_2)$};
\node at (-0.65,0.5) {$\ss (b_1 ,b_2)$};
\node at (0.5,1.3) {$\ss (c_1 , c_2)$};
\node at (1.65,0.5) {$\ss (d_1 , d_2)$};
\end{tikzpicture}. For a vertex appearing in the first column, we have $b_{1}=0$. In ~(\ref{puzzle:higherspin-uncomplemented-blueparticles}), we see that the right edge of every vertex in the first column is entirely filled. We can then conclude that $d_1 =d_2$ for all the vertices in the first column.

For the vertex at the bottom of the first column, we have $a_{2}=0$. We show that the weight of the bottom vertex vanishes as $z_{0} = 0$ unless $c_{2}=0$. Using this fact, we can assume that $a_{2}=0$ for every other vertex in the first column. In conclusion, we need to show that the weight of the \begin{tikzpicture}[scale=0.8,baseline=-2pt]
 \draw[gray] (0,0) rectangle (1,1); 
\node at (0.5,-0.3) {$\ss (a ,0)$};
\node at (-0.5,0.5) {$\ss (0 ,b)$};
\node at (0.5,1.3) {$\ss (c_1 , c_2)$};
\node at (1.5,0.5) {$\ss (d , d)$};
\end{tikzpicture} vanishes as $z_{0}=0$ unless $c_2=0$. To further simplify the computations, we can take the limit $q^{-\tm_{0}}\mapsto 0$. This does not affect any other regions: $\bl{B}$ and $\bl{D}$ are normalised so that the weights of the vertices in the first column are $1$ and $\bl{C}$ remains unaffected which can be seen from the discussion in ~(\cref{lemma:regionC_partition function}). We take the weights from ~(\ref{generalweights}) which are then written in such way that their $q^{-\tl}$ and $q^{-\tm}$ dependence is explicit. We then take $q^{-\tm}\mapsto 0$ which gives us the following:

\begin{multline}
\mathcal{W}_{\tl,\tm}\left(
\begin{tikzpicture}[scale=0.8,baseline=-2pt]
 \draw[gray] (0,0) rectangle (1,1); 
\node at (0.5,-0.3) {$\ss (a,0)$};
\node at (-0.5,0.5) {$\ss (0 ,b)$};
\node at (0.5,1.3) {$\ss (c_1 , c_2)$};
\node at (1.5,0.5) {$\ss (d,d)$};
\draw[->] (0.5,-1) node[below] {$\ss z$}--(0.5,-0.7);
\draw[->] (-1.4,0.5) node[left] {$\ss {y^{-1}q^{-\tl+1}}$}--(-1.1,0.5);
\end{tikzpicture}\right)
=\mathbf{1}_{a=c_1+d} \mathbf{1}_{d=b+c_2}
q^{b}(-y)^{-c_2}
\dfrac{(q^{-1};q^{-1})_{b}} {(q;q)_{c_2}}\prod^{c_2}_{i=1}(q^{-\tl}-q^{i-d}) \\[1 em]
\binom{c_1 +d}{c_1 }_{q} q^{\frac{2 c_2-d+d^{2}}{2}}(-1)^{d+c_2} \sum^{c_2}_{ p_{2}=0 } (-1)^{-p_2} q^{\frac{(c_2 -p_2)(c_2 -p_2-1)}{2}}  
\frac{\prod^{p_2}_{i=1} (1-q^{-1}z y q^{i-1})}{\prod^{p_2 +d-c_2}_{i=1} (1-y zq^{i-1})}
\left(
 \dfrac{(q;q)_{c_2}}{(q;q)_{c_2-p_2}(q;q)_{p_2}}\right)\\[1em]
 \end{multline}
By applying the binomial theorem, we get:
\begin{multline}
=\mathbf{1}_{a=c_1+d} \mathbf{1}_{d=b+c_2}
q^{b}(-y)^{-c_2}
\dfrac{(q^{-1};q^{-1})_{b}} {(q;q)_{c_2}}\binom{c_1 +d}{c_1 }_{q} \prod^{c_2}_{i=1}(q^{-\tl}-q^{i-d}) \\[1 em]
q^{\frac{2 c_2-d+d^{2}}{2}}(-1)^{d+c_2} 
\dfrac{\left(\prod^{c2}_{i=1}q^{i-2}y z\right)}{(yz;q)_{d}}\dfrac{(q;q)_{d}}{(q;q)_{d-c_2}}
\end{multline}

\begin{multline}
=\mathbf{1}_{a=c_1+d} \mathbf{1}_{d=b+c_2}
\dfrac{(q^{-1};q^{-1})_{b}} {(q;q)_{c_2}}\binom{c_1 +d}{c_1 }_{q} \prod^{c_2}_{i=1}(q^{-\tl}-q^{i-d}) \\[1 em]
q^{\frac{2 c_2-d+d^{2}}{2} +b}(-1)^{d} 
z^{c_2}\dfrac{\left(\prod^{c2}_{i=1}q^{i-2}\right)}{(yz;q)_{d}}\dfrac{(q;q)_{d}}{(q;q)_{d-c_2}}
\end{multline}

Therefore, the weight of the vertices in the first column vanishes as $z_{0}=0$ unless $c_{2}=0$. When $c_{2}=0$, we get that $b=d$ and $c_1=a-d$ and the weights reduce to:

\begin{align}
=\mathbf{1}_{a_1=c_1+d} \mathbf{1}_{d=b}
\dfrac{(q;q)_{a}}{(q;q)_{a-b}}  
\dfrac{1}{(yz;q)_{b}}
\end{align}

As we set $z_{0}$ the weight of the first column, through telescoping, is equal to $(q;q)_{n}$.

\bibliographystyle{alpha}
\bibliography{spinlr}{}
\end{document}